\begin{document}
\newcommand {\emptycomment}[1]{} 

\newcommand{\tabincell}[2]{\begin{tabular}{@{}#1@{}}#2\end{tabular}}

\newcommand{\nc}{\newcommand}
\newcommand{\delete}[1]{}

\nc{\mlabel}[1]{\label{#1}}  
\nc{\mcite}[1]{\cite{#1}}  
\nc{\mref}[1]{\ref{#1}}  
\nc{\meqref}[1]{Eq.~\eqref{#1}} 
\nc{\mbibitem}[1]{\bibitem{#1}} 

\delete{
\nc{\mlabel}[1]{\label{#1}  
{\hfill \hspace{1cm}{\bf{{\ }\hfill(#1)}}}}
\nc{\mcite}[1]{\cite{#1}{{\bf{{\ }(#1)}}}}  
\nc{\mref}[1]{\ref{#1}{{\bf{{\ }(#1)}}}}  
\nc{\meqref}[1]{Eq.~\eqref{#1}{{\bf{{\ }(#1)}}}} 
\nc{\mbibitem}[1]{\bibitem[\bf #1]{#1}} 
}

\newtheorem{thm}{Theorem}[section]
\newtheorem{lem}[thm]{Lemma}
\newtheorem{cor}[thm]{Corollary}
\newtheorem{pro}[thm]{Proposition}
\newtheorem{conj}[thm]{Conjecture}
\theoremstyle{definition}
\newtheorem{defi}[thm]{Definition}
\newtheorem{ex}[thm]{Example}
\newtheorem{rmk}[thm]{Remark}
\newtheorem{pdef}[thm]{Proposition-Definition}
\newtheorem{condition}[thm]{Condition}

\renewcommand{\labelenumi}{{\rm(\alph{enumi})}}
\renewcommand{\theenumi}{\alph{enumi}}
\renewcommand{\labelenumii}{{\rm(\roman{enumii})}}
\renewcommand{\theenumii}{\roman{enumii}}

\nc{\tred}[1]{\textcolor{red}{#1}}
\nc{\tblue}[1]{\textcolor{blue}{#1}}
\nc{\tgreen}[1]{\textcolor{green}{#1}}
\nc{\tpurple}[1]{\textcolor{purple}{#1}}
\nc{\btred}[1]{\textcolor{red}{\bf #1}}
\nc{\btblue}[1]{\textcolor{blue}{\bf #1}}
\nc{\btgreen}[1]{\textcolor{green}{\bf #1}}
\nc{\btpurple}[1]{\textcolor{purple}{\bf #1}}


\newcommand{\End}{\text{End}}

\nc{\calb}{\mathcal{B}}
\nc{\call}{\mathcal{L}}
\nc{\calo}{\mathcal{O}}
\nc{\frakg}{\mathfrak{g}}
\nc{\frakh}{\mathfrak{h}}
\nc{\ad}{\mathrm{ad}}
\def \gl {\mathfrak{gl}}
\def \g {\mathfrak{g}}

\nc{\ccred}[1]{\tred{\textcircled{#1}}}


\newcommand{\cm}[1]{\textcolor{purple}{\underline{CM:}#1 }}

\newcommand\blfootnote[1]{%
  \begingroup
  \renewcommand\thefootnote{}\footnote{#1}%
  \addtocounter{footnote}{-1}%
  \endgroup
}


\title[Yang-Baxter equations and relative Rota-Baxter operators]
{Yang-Baxter equations and relative Rota-Baxter operators for left-Alia algebras associated to invariant theory}
\author{Chuangchuang Kang}
\address{
School of Mathematical Sciences  \\
Zhejiang Normal University\\
Jinhua 321004 \\
China}
\email{kangcc@zjnu.edu.cn}

\author{Guilai Liu}
\address{Chern Institute of Mathematics \& LPMC    \\
Nankai University \\
 Tianjin 300071   \\
  China}
\email{liugl@mail.nankai.edu.cn}


\author{Shizhuo Yu}
\address{
School of Mathematical Sciences \& LPMC    \\
Nankai University \\
Tianjin 300317              \\
China}
\email{yusz@nankai.edu.cn}

\blfootnote{*Corresponding Author:\ Shizhuo\ Yu. Email:\ yusz@nankai.edu.cn\ }

\begin{abstract}
Left-Alia algebras are a class of algebras with symmetric Jacobi identities. They contain several typical types of algebras as subclasses, and are closely related to the invariant theory.
In this paper, we study the construction theory of left-Alia bialgebras.
We  introduce
 the notion of the left-Alia Yang-Baxter equation.
We show that an antisymmetric solution of the left-Alia Yang-Baxter equation gives rise to a left-Alia bialgebra that we call triangular.
The notions of relative Rota-Baxter operators of left-Alia algebras and pre-left-Alia algebras are introduced to provide antisymmetric solutions of the left-Alia Yang-Baxter equation.
\end{abstract}

\subjclass[2020]{
17A30,  
17A36,
17B38,
17B60, 
17B62,
16W22} 

\keywords{Left-Alia algebras; Reflection groups; Yang-Baxter equations; Relative Rota-Baxter operators}

\maketitle


\tableofcontents

\allowdisplaybreaks

\section{Introduction}
A left-Alia algebra \cite{Dzh09} is a pair $(A,[\cdot,\cdot])$ consisting of a vector space $A$ and a bilinear map $[\cdot,\cdot]:A\otimes A\rightarrow A$ satisfying the symmetric Jacobi identity
	\begin{equation}\label{eq:0-Alia}
		[[x, y], z]+[[y, z], x]+[[z, x], y]=[[y, x], z]+[[z, y], x]+[[x, z], y],\;\forall x,y,z\in A.
	\end{equation}
There are various typical examples of left-Alia algebras, including Lie algebars, anti-pre-Lie algebras~\cite{LB2022} and mock-Lie algebras~\cite{Zusmanovich}.  Moreover, a class of non-Lie algebra examples can be realized from twisted derivations in invariant theory.

\subsection{Yang-Baxter equations and relative Rota-Baxter operators}
The classical Yang-Baxter equation (CYBE)  was  initially presented in tensor form as
$$
[r_{12},r_{13}]+[r_{12},r_{23}]+[r_{13},r_{23}]=0,
$$
where $r \in \g\otimes \g$ and
$(\frak g,[-,-])$ is a Lie algebra (see \cite{Chari} for more details). The CYBE serves as the ``semi-classical limit'' of the quantum Yang-Baxter equation and is closely related to classical integrable systems and quantum groups . Moreover, antisymmetric solutions of the CYBE give rise to (triangular) Lie bialgebras \cite{Chari,Dri}.
\delete{
Previous studies \cite{Belavin-D,Semenov,Kupershmidt} have explored the operator form  of the CYBE, which is an $\mathcal{O}$-operator  associated to the adjoint representation of $\g$.

It was shown that any $\mathcal{O}$-operators provide antisymmetric
solutions of the CYBE, while pre-Lie algebras naturally
provide $\mathcal{O}$-operators on the sub-adjacent Lie algebras \cite{Bai-O}.

Rota-Baxter operators can be viewed as an algebraic abstraction of the integral operator, see \cite{Guo-I} for more details.

Originally investigated by Baxter in his study of fluctuation theory in probability \cite{Baxter} and further developed by Rota \cite{Rota}.

 Rota-Baxter operators  appears in many areas such as quantum field theory renormalization \cite{Connes},  combinatorics \cite{Gu}, multiple zeta values in number theory \cite{Guo-Zhang} and algebraic operad \cite{Aguiar}.

The $\mathcal{O}$-operator associated to the representation of Lie algebras, also known as a Rota-Baxter operator of weight 0 on a Lie algebra \cite{Bai-Guo-R,Jiang}.

Furthermore,
Rota-Baxter operators are special cases of relative Rota-Baxter operators, which have inspired subsequent studies on deformations, cohomologies and homotopies of relative Rota-Baxter Lie algebras \cite{Sheng-R}, relative Rota-Baxter operators on Hopf algebras \cite{Li-Sheng} and bimodules over relative Rota-Baxter algebras \cite{Das}.
}

The notion of Rota-Baxter operators on associative algebras was originally investigated by Baxter in his study of fluctuation theory in probability \cite{Baxter} and further developed by Rota \cite{Rota}.
The study of Rota-Baxter operators has experienced a great expansion in recent years due to their broad connections with combinatorics, quantum field theory, number theory and operads (see \cite{Guo-I,Rota2} and the references therein). In particular, Rota-Baxter operators on Lie algebras arose independently as the operator forms of the CYBE. Explicitly, it was shown in \cite{Semenov} that an antisymmetric solution of the CYBE on a quadratic Lie algebra is equivalent to a linear map $P:\frak g\rightarrow\frak g$ satisfying
\begin{equation}\label{eq:rb0}
	[P(x),P(y)]=P\big([P(x),y]+[x,P(y)]\big),\;\forall x,y\in\frak g.
\end{equation}
The equation \eqref{eq:rb0} is just what we call nowadays a Rota-Baxter operator (of weight $0$) on a Lie algebra.
The operator form of the CYBE was later restudied in \cite{Kupershmidt}. Identifying an $r\in\frak g\otimes\frak g$ as a linear map $r^{\sharp}:\frak g^{*}\rightarrow \frak g$, Kupershmidt showed that when $r$ is antisymmetric, $r$ is a solution of  the CYBE if and only if $r^{\sharp}$ satisfies
\begin{equation}\label{eq:rrb}
	[r^{\sharp}(a^{*}),r^{\sharp}(b^{*})]=r^{\sharp}\Big( \mathrm{ad}^{*}\big(r^{\sharp}(a^{*})\big)b^{*}-\mathrm{ad}^{*}\big(r^{\sharp}(b^{*})\big)a^{*} \Big),\;\forall a^{*},b^{*}\in\frak g^{*}.
\end{equation}
Unifying \eqref{eq:rb0} and \eqref{eq:rrb}, the notion of a relative Rota-Baxter operator (originally named as an $\mathcal{O}$-operator  \cite{Kupershmidt}) of a Lie algebra $(\frak g,[-,-])$ associated to an arbitrary representation $(\rho,V)$ was also introduced as a natural generalization, which takes the form
\begin{equation}
	[T(u),T(v)]=T\Big( \rho\big(T(u)\big)v-\rho\big(T(v)\big)u \Big),\;\forall u,v\in V.
\end{equation}
Relative Rota-Baxter operators have inspired subsequent studies on deformations, cohomologies and homotopies of relative Rota-Baxter Lie algebras \cite{Sheng-R}, relative Rota-Baxter operators on Hopf algebras \cite{Li-Sheng} and bimodules over relative Rota-Baxter algebras \cite{Das}.

\subsection{Motivation}
A bialgebra structure is a vector space equipped with an algebra structure and a coalgebra structure satisfying some compatible conditions.
In addition to the aforementioned Lie bialgebras, some known examples are
antisymmetric infinitesimal bialgebras \cite{Bai2010,SW}, Leibniz bialgebras \cite{ShengT}, anti-pre-Lie bialgebras \cite{LB2023}, mock-Lie bialgebras \cite{Benali} and so on.
These bialgebras have a common property, that is, they have
equivalent characterizations in terms of Manin triples which
correspond to nondegenerate (symmetric or antisymmetric) bilinear forms on the algebra structures
satisfying certain invariant conditions. It is natural to consider the bialgebra theory for left-Alia algebras via the Manin triple approach.

In the previous article \cite{Kang-Liu}, we obtain a class of left-Alia algebras based on twisted derivations of commutative associative algebras.
The notions of  Manin triples of left-Alia algebras (with respect to the symmetric invariant bilinear form) and left-Alia bialgebras are also studied. Moreover, their equivalence  is figured out via specific matched pairs of left-Alia algebras.

\delete{
We also introduced the notions of a Manin triple of left-Alia algebras (with respect to the symmetric invariant bilinear form) and a left-Alia bialgebra, and showed that they are equivalent structures through matched pairs of left-Alia algebras with respect to the coadjoint representation.
However, the construction theory of left-Alia bialgebras remain unknown.}

In this paper, we aim to solve the problem on how to construct a left-Alia bialgebra. As an analogue of the studies on Lie bialgebras \cite{Chari,Dri}, we first introduce the notion of a left-Alia Yang-Baxter equation. Then we show that an antisymmetric solution of the left-Alia Yang-Baxter equation gives rise to a left-Alia bialgebra that we call triangular. Finally, we study solutions of the left-Alia Yang-Baxter equation in terms of the operator forms, namely the relative Rota-Baxter operators of left-Alia algebras.

\delete{
Motivated by the bialgebra theory involving Lie and other algebraic structures (see \cite{Dri,ShengT,SW,LB2022,LB2023,Benali}), we also established the equivalence between Manin triples  of left-Alia algebras and left-Alia bialgebras.
 Is it possible to define Yang-Baxter equation and relative Rota-Baxter operators for left-Alia algebras? In this paper, we first introduce the notion of  Yang-Baxter equations, triangular r-matrices and relative Rota-Baxter operators for left-Alia algebras. Based on the structure of left Alia bialgebras, we construct a class of triangular $r$-matrices for left-Alia algebras.}

\subsection{Outline of the paper}
This paper is organized as follows. In Section 2, we recall the notion of left-Alia algebras and their representations, including a class of non-trivial examples in invariant theory. In Section 3, we recall the notions of Manin triples of left-Alia algebras and left-Alia bialgebras. In Section 4, we introduce the concept of the left-Alia Yang-Baxter equation.
We show that antisymmetric solutions of the left-Alia Yang-Baxter equation give rise to triangular left-Alia bialgebras (see Proposition \ref{pro:4.2}).
In Section 5, we introduce relative Rota-Baxter operators of left-Alia algebras as the operator forms of solutions of the left-Alia Yang-Baxter equation. We show that in the antisymmetric condition, a solution of the left-Alia Yang-Baxter equation is one-to-one correspondence to a relative Rota-Baxter operator with respect to the coadjoint representation (Theorem \ref{thm:4.10}).
In Section 6, we introduce the notion of a pre-left-Alia algebra $(A,\succ,\prec)$,  which is the underlying algebraic structure of
a relative Rota-Baxter operator with respect to a representation $(l,r,V)$ of a left-Alia algebra. We prove that there is a
natural antisymmetric solution of the left-Alia Yang-Baxter equation in the left-Alia algebra $A\ltimes_{\mathcal{L}^{*}_{\succ},\mathcal{L}^{*}_{\succ}-\mathcal{R}^{*}_{\prec} }A^{*}$ (Theorem \ref{thm:4.16}).

Throughout this paper, unless otherwise specified, all vector spaces are finite-dimensional over an algebraically closed field $\mathbb{K}$ of characteristic zero, although many  notions and results remain valid in the infinite-dimensional case.

\section{Left-Alia algebras associated to invariant theory}
\subsection{Definition of left Alia algebras}
\begin{defi}\cite{Dzh09}
	A \textbf{{left-Alia algebra}} is a vector space $A$ together with a bilinear \linebreak  map $[\cdot,\cdot]:A\times A\rightarrow A$ satisfying the symmetric Jacobi property \eqref{eq:0-Alia}.
\end{defi}
\begin{rmk}
A left-Alia algebra $(A,[\cdot,\cdot])$ is a Lie algebra if and only if the bilinear map $[\cdot,\cdot]$ is antisymmetric.
\end{rmk}

\delete{
In~\cite{Dzh09}, there is also the notion of a right-Alia algebra, defined as a vector space $A$ together with a bilinear map $[\cdot,\cdot]':A\times A\rightarrow A$ satisfying
\begin{equation}\label{eq:right Alia}
		[x,[y,z]']'+[y,[z,x]']'+[z,[x,y]']'=[x,[z,y]']'+[y,[x,z]']'+[z,[y, x]']',\;\forall x,y,z\in A.
	\end{equation}
	It is clear that $(A,[\cdot,\cdot]')$ is a right-Alia algebra if and only if the opposite algebra $(A,[\cdot,\cdot])$ of $(A,[\cdot,\cdot]')$, given by
	$[x,y]=[y,x]'$, is a left-Alia algebra.
	Thus, our study on left-Alia algebras can straightforwardly generalize a parallel study on right-Alia algebras.
\begin{ex}\label{ex:3-left}
Let $A$ be a 3-dimensional vector space
over the complex field $\mathbb{C}$ with a basis $\{e_1, e_2, e_3\}$. Define a linear map $[\cdot,\cdot]:A\times A\rightarrow A$ by
\begin{align*}
&[e_1,e_2]=e_1,~~[e_1,e_3]=e_1,~~[e_2,e_1]=e_2,~~[e_3,e_1]=e_3,\\
   &[e_1,e_1]=[e_2,e_2]=[e_3,e_3]=[e_2,e_3]=[e_3,e_2]=e_1+e_2+e_3.
\end{align*}
Then $(A, [\cdot,\cdot])$ is a 3-dimensional left-Alia algebra.
\end{ex}
}

\begin{defi}
	Let $(A,\cdot)$ be a commutative associative algebra.
 A linear map $D:A\rightarrow A$ is called \textbf{a twisted derivation} respect to an $R:A\rightarrow A$ if $D$ satisfies the twisted Lebniz rule:
\begin{equation}\label{eq:s-der}
D(f\cdot g)=D(f)\cdot g+R(f)\cdot D(g),\ f,g\in A.
\end{equation}
\end{defi}
The following theorem in \cite{Kang-Liu} provide a class of examples of left-Alia algebras.
\begin{thm}\label{thm:R-D}\cite{Kang-Liu}
 Let $(A,\cdot)$ be a commutative associative algebra and $D$ be a twisted derivation. Define the bilinear map $[\cdot,\cdot]_R:A\times A\rightarrow A$ by
\begin{equation}\label{eq:R-der}
[x,y]_R:=x\cdot D(y)-R(y)\cdot D(x),\ x,y\in A.
\end{equation}
Then $(A,[\cdot,\cdot]_R)$ is a left-Alia algebra.
\end{thm}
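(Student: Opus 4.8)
The plan is to verify the symmetric Jacobi identity \eqref{eq:0-Alia} for $[\cdot,\cdot]_R$ directly, after reducing it to a statement about the antisymmetrization of the bracket. Writing $\langle x,y\rangle:=[x,y]_R-[y,x]_R$ and subtracting the right-hand side of \eqref{eq:0-Alia} from its left-hand side, the six terms pair up (according to which element sits as the second argument of the outer bracket) into $[\langle x,y\rangle,z]_R+[\langle y,z\rangle,x]_R+[\langle z,x\rangle,y]_R$. Hence it suffices to show that this cyclic sum vanishes for all $x,y,z\in A$.

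The crucial preliminary step is to exploit the commutativity of $(A,\cdot)$. Applying the twisted Leibniz rule \eqref{eq:s-der} to the two equal products $f\cdot g$ and $g\cdot f$ and comparing yields the consistency relation
\begin{equation*}
(f-R(f))\cdot D(g)=(g-R(g))\cdot D(f),\qquad f,g\in A,
\end{equation*}
equivalently $f\cdot D(g)-g\cdot D(f)=R(f)\cdot D(g)-R(g)\cdot D(f)$. Substituting this into \eqref{eq:R-der} collapses the antisymmetrization to the simple form $\langle x,y\rangle=2\big(x\cdot D(y)-y\cdot D(x)\big)$; setting $w_{x,y}:=x\cdot D(y)-y\cdot D(x)$, the identity to be proved becomes $\sum_{\mathrm{cyc}}[w_{x,y},z]_R=0$, the sum being over the cyclic permutations of $(x,y,z)$.

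The rest is a short computation. From \eqref{eq:R-der}, $[w_{x,y},z]_R=w_{x,y}\cdot D(z)-R(z)\cdot D(w_{x,y})$. For the first term, $w_{x,y}\cdot D(z)=x\cdot D(y)\cdot D(z)-y\cdot D(x)\cdot D(z)$, and summing over the cyclic permutations the six monomials cancel in three pairs by commutativity of $\cdot$. For the second term, the twisted Leibniz rule gives $D(w_{x,y})=D(x)\cdot D(y)+R(x)\cdot D^2(y)-D(y)\cdot D(x)-R(y)\cdot D^2(x)=R(x)\cdot D^2(y)-R(y)\cdot D^2(x)$, the first-order $D$-terms dropping out again by commutativity; hence $R(z)\cdot D(w_{x,y})=R(z)\cdot R(x)\cdot D^2(y)-R(z)\cdot R(y)\cdot D^2(x)$, and in the cyclic sum the coefficients of $D^2(x)$, $D^2(y)$ and $D^2(z)$ each vanish, once more by commutativity of $\cdot$. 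Therefore $\sum_{\mathrm{cyc}}[w_{x,y},z]_R=0$, which is exactly \eqref{eq:0-Alia}.

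I expect the only genuinely delicate point to be the consistency relation and the resulting linearization of $\langle x,y\rangle$: a brute-force expansion of the cyclic sum that ignores it leaves uncancelled terms of the form $R(z)\cdot D(R(x))\cdot D(y)$. Everything else uses only the commutativity and associativity of $(A,\cdot)$ together with the linearity of $D$ and $R$, so no further structure is needed.
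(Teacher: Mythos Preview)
Your argument is correct. The reduction of the symmetric Jacobi identity to the vanishing of $\sum_{\mathrm{cyc}}[\langle x,y\rangle,z]_R$ is just bilinearity in the first slot; the consistency relation $(f-R(f))\cdot D(g)=(g-R(g))\cdot D(f)$, obtained by applying the twisted Leibniz rule to $f\cdot g=g\cdot f$, indeed gives $\langle x,y\rangle=2w_{x,y}$; and the two cyclic sums you compute both vanish by commutativity of $\cdot$, exactly as you say.

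There is no proof in the present paper to compare with: the statement is quoted from \cite{Kang-Liu} without argument. What the paper does supply, immediately afterwards, is the more general construction of Theorem~\ref{special} (Dzhumadil'daev): for any linear maps $f,g$ on a commutative associative algebra, $[x,y]=x\cdot f(y)+g(x\cdot y)$ is left-Alia. Your consistency relation can be rewritten as $R(y)\cdot D(x)=D(x\cdot y)-x\cdot D(y)$, so that
\[
[x,y]_R=x\cdot D(y)-R(y)\cdot D(x)=2\,x\cdot D(y)-D(x\cdot y),
\]
which is precisely the special left-Alia bracket of Theorem~\ref{special} with $f=2D$ and $g=-D$. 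Thus an alternative one-line proof is to invoke that theorem after this rewriting. Your direct verification has the virtue of being self-contained and of isolating exactly where the twisted Leibniz rule enters (once to simplify $\langle x,y\rangle$, once to compute $D(w_{x,y})$); the route through Theorem~\ref{special} trades that transparency for brevity.
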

A more general construction of left-Alia algebras is given in \cite{Dzh09}.
\begin{thm}\cite{Dzh09}\label{special}
Let $(A,\cdot)$ be a commutative associative algebra with linear maps $f,g:A\rightarrow A$.
	There is a left-Alia algebra $(A,[\cdot,\cdot])$ given by
	\begin{equation}\label{eq:fg}
		[x,y]=x\cdot f(y)+g(x\cdot y),\;\forall x,y\in A.
	\end{equation}
	which is called \textbf{a special left-Alia algebra} with respect to $(A,\cdot,f,g)$.
\end{thm}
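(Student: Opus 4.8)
The plan is to verify the symmetric Jacobi identity \eqref{eq:0-Alia} for the bracket \eqref{eq:fg} directly, but organized so as to avoid expanding all twelve brackets at once. The crucial preliminary remark is that the summand $g(x\cdot y)$ in \eqref{eq:fg} is symmetric under exchanging $x$ and $y$, because $(A,\cdot)$ is commutative; hence it contributes nothing to an antisymmetrization in the first slot. Concretely, writing $\langle x,y\rangle := [x,y]-[y,x] = x\cdot f(y)-y\cdot f(x)$, the difference between the left- and right-hand sides of \eqref{eq:0-Alia} collapses to
\begin{equation*}
\big([[x,y],z]-[[y,x],z]\big)+\big([[y,z],x]-[[z,y],x]\big)+\big([[z,x],y]-[[x,z],y]\big)=[\langle x,y\rangle,z]+[\langle y,z\rangle,x]+[\langle z,x\rangle,y].
\end{equation*}
So it suffices to prove that this cyclic sum vanishes for all $x,y,z\in A$.

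Next I would expand each term via \eqref{eq:fg}, namely $[\langle x,y\rangle,z]=\langle x,y\rangle\cdot f(z)+g(\langle x,y\rangle\cdot z)$ and its cyclic analogues, and split the cyclic sum, using linearity of $g$, into an ``$f$-part'' $\sum_{\mathrm{cyc}}\langle x,y\rangle\cdot f(z)$ and a ``$g$-part'' $g\big(\sum_{\mathrm{cyc}}\langle x,y\rangle\cdot z\big)$. For the $f$-part, substituting $\langle x,y\rangle=x\cdot f(y)-y\cdot f(x)$ yields six monomials which cancel in pairs purely by commutativity and associativity of $\cdot$ (e.g. $x\cdot f(y)\cdot f(z)$ against $-x\cdot f(z)\cdot f(y)$). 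For the $g$-part, the inner expression $\sum_{\mathrm{cyc}}\big(x\cdot f(y)-y\cdot f(x)\big)\cdot z$ likewise expands into six monomials that cancel in pairs by commutativity (e.g. $x\cdot f(y)\cdot z$ against $-z\cdot f(y)\cdot x$), so the inner sum is $0$ and the $g$-part equals $g(0)=0$. Adding the two vanishing parts gives \eqref{eq:0-Alia}.

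There is essentially no serious obstacle: once one observes that the symmetric term $g(x\cdot y)$ drops out of the antisymmetrization, the rest is a short bookkeeping exercise inside the commutative associative algebra $(A,\cdot)$. The only mild care needed is to track signs correctly when expanding $\langle\cdot,\cdot\rangle$ and to note that the pairing of monomials cancelling the $f$-part is independent of the one cancelling the $g$-part. (Alternatively one could skip the reduction and brute-force all twelve brackets of \eqref{eq:0-Alia}, but grouping through $\langle\cdot,\cdot\rangle$ makes the cancellations transparent.)
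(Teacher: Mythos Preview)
Your argument is correct. The observation that $g(x\cdot y)$ is symmetric in $x,y$ makes the antisymmetrization $\langle x,y\rangle=[x,y]-[y,x]$ depend only on $f$, and then the cyclic sum $\sum_{\mathrm{cyc}}[\langle x,y\rangle,z]$ splits cleanly into two pieces, each of which vanishes by commutativity and associativity of $\cdot$; the six-term cancellations you describe are accurate in both the $f$-part and the $g$-part.

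As for comparison: the paper does not supply its own proof of this theorem. It is stated with a citation to \cite{Dzh09} and used as background, so there is no argument in the paper to compare against. Your direct verification via the reduced bracket $\langle\cdot,\cdot\rangle$ is a clean and self-contained way to fill this in.
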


\subsection{Examples of left-Alia algebras associated to invariant theory}
Let $G$ be a finite group and  $\mathbb {K}$ an algebraic closed field  of characteristic zero.
Suppose that $(\rho,V)$ is a $n$-dimensional faithful representation of $G$ and its dual representation is denoted by $(\rho,V^*)$.
Let $S=\mathbb {K}[V]=\mathbb {K}[x_1,\ldots,x_n]$ be the coordinate ring of $V$. Define a $G$-action on $S$ by
\begin{equation}\label{eq:action}
g\cdot \sum_{i_1,\ldots,i_n} k_{i_1,\ldots,i_n} x_1^{i_1}\ldots x_n^{i_n}:=\sum_{i_1,\ldots,i_n} k_{i_1,\ldots,i_n}(\rho (g)x_1)^{i_1}\ldots (\rho (g)x_n)^{i_n}.
\end{equation}
Define the ring of invariants by
\[S^{G}=\{f\in S: a\cdot f=f,\ \forall a\in G\}.\]
\begin{defi}\cite{IT}
A linear automorphism $R\in GL(V)$ is called \textbf{a pseudo-reflection} if $R^m=I$ for some $m\in \mathbb{N}^*$ and ${\rm Im}(I-R)$ is 1-dimensional.
\end{defi}
In invariant theory, Shephard, Todd and Chevalley figure out the following equivalent condition that $S^G$ is a polynomial algebra.
\begin{thm}\cite{Che,ST}
$S^G$ is a polynomial algebra if and only if $G\cong \rho(G)$ is generated by pseudo-reflections.
\end{thm}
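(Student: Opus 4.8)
The plan is to prove the two implications separately; in both, the key is to play the \emph{Hilbert ideal} $I := S_+^G\cdot S$ (generated in $S$ by the homogeneous invariants of positive degree) against the \emph{Reynolds operator} $\pi := \frac{1}{|G|}\sum_{g\in G} g\colon S\to S^G$ (which exists since $\mathrm{char}\,\mathbb{K}=0$ and is $S^G$-linear), using the defining feature of a pseudo-reflection $R$: it acts as the identity on the hyperplane $\ker(I-R)$, so $g-R(g)$ vanishes on that hyperplane and is therefore divisible by its linear defining equation $\ell_R$, for every $g\in S$.

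\emph{($G$ generated by pseudo-reflections $\Rightarrow$ $S^G$ polynomial; Chevalley's direction.)} Pick homogeneous invariants $f_1,\dots,f_m$ of positive degree that generate $I$ with $m$ minimal, so that no $f_i$ lies in the $S$-ideal generated by the others. First, the $f_i$ already generate $S^G$ as a $\mathbb K$-algebra: if $f\in S^G$ is homogeneous of degree $d>0$, write $f=\sum_i a_i f_i$ with $a_i\in S$ homogeneous of degree $<d$, apply $\pi$ to get $f=\pi(f)=\sum_i\pi(a_i)f_i$ with $\pi(a_i)\in S^G$ of smaller degree, and induct on $d$. Second --- the substantive point --- $m=n$ and $f_1,\dots,f_n$ are algebraically independent. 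The engine is a lemma: if $f_1,\dots,f_m\in S^G$ are homogeneous with $f_1$ outside the $S^G$-ideal generated by $f_2,\dots,f_m$, and $\sum_i a_i f_i=0$ with $a_i\in S$ homogeneous, then $a_1\in I$; this is proved by induction on $\deg a_1$, splitting each $a_i=\pi(a_i)+(a_i-\pi(a_i))$ and expanding $a_i-\pi(a_i)$ through divisibility by the $\ell_R$'s to drop the degree. Granting the lemma, if the $f_i$ were algebraically dependent, choose a relation $P(f_1,\dots,f_m)=0$ of least weighted degree, set $Q_i:=(\partial_{y_i}P)(f_1,\dots,f_m)\in S^G$ (not all zero since $\mathrm{char}\,\mathbb K=0$), differentiate the relation by each $x_k$ to get $\sum_i Q_i\,\partial_{x_k}f_i=0$, pass to a minimal $S$-generating subfamily of the $Q_i$ and apply the lemma, then multiply by $x_k$, sum over $k$ and invoke Euler's identity $\sum_k x_k\partial_{x_k}f_i=(\deg f_i)f_i$ to contradict the minimality of $m$. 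Hence $S^G=\mathbb K[f_1,\dots,f_n]$.

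\emph{($S^G$ polynomial $\Rightarrow$ $G$ generated by pseudo-reflections.)} Let $N\trianglelefteq G$ be the subgroup generated by all pseudo-reflections in $G$; since $N\subseteq G$ acts faithfully on $V$, the direction just proved gives $S^N=\mathbb K[h_1,\dots,h_n]$ with $\deg h_j=e_j$, while $S^G=\mathbb K[f_1,\dots,f_n]$ with $\deg f_i=d_i$. Expand Molien's series $\frac{1}{|G|}\sum_{g\in G}\det(1-tg|_V)^{-1}$ about $t=1$: the element $g=1$ alone contributes to the pole of order $n$, and pseudo-reflections alone to the pole of order $n-1$ (for a pseudo-reflection with nontrivial eigenvalue $\lambda$ one has $\det(1-tg|_V)=(1-t)^{n-1}(1-\lambda t)$, and $\tfrac1{1-\lambda}+\tfrac1{1-\lambda^{-1}}=1$, so the $R\leftrightarrow R^{-1}$ pairing collects these residues into $\tfrac12\#\{\text{pseudo-reflections}\}$). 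Matching with $\prod_i(1-t^{d_i})^{-1}$ expanded at $t=1$ yields $|G|=\prod_i d_i$ and $\#\{\text{pseudo-reflections in }G\}=\sum_i(d_i-1)$, and likewise for $N$. As $G$ and $N$ contain exactly the same pseudo-reflections, $\sum_i(d_i-1)=\sum_j(e_j-1)$. Finally $S$ is a finite module over the polynomial (hence regular) ring $S^G$, so its submodule $S^N$ is too, and being Cohen--Macaulay of dimension $n$ it is a free graded $S^G$-module; comparing Poincaré series gives $\prod_i(1-t^{d_i})=h(t)\prod_j(1-t^{e_j})$ for a polynomial $h$ with $h(0)=1$, and $\deg h=\sum_i d_i-\sum_j e_j=0$ forces $h\equiv1$, so the degree multisets $\{d_i\}$ and $\{e_j\}$ coincide, $|G|=\prod_i d_i=\prod_j e_j=|N|$, and $N=G$.

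The main obstacle is the lemma in Chevalley's direction together with its deployment via the Jacobian--Euler computation to force algebraic independence; once that is in hand, the converse is essentially bookkeeping with Molien's formula, the count of pseudo-reflections, and the freeness of $S^N$ over $S^G$.
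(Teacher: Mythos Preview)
Your sketch is correct and follows the classical route: Chevalley's original argument (Hilbert ideal, Reynolds averaging, the key lemma on coefficients lying in $I$, and the Jacobian--Euler trick) for the forward implication, and the standard converse via Molien's series together with freeness of $S^{N}$ over $S^{G}$. One small point worth stating explicitly is that $N$ is automatically normal in $G$ (conjugates of pseudo-reflections are pseudo-reflections), which you use implicitly; otherwise the argument is sound.

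However, there is nothing to compare against: the paper does not prove this theorem at all. It is quoted as a classical result with citations to Chevalley and to Shephard--Todd, and is used only as background motivation for the construction of left-Alia algebras from twisted derivations. So your proposal is not an alternative to the paper's proof but rather a supplied proof where the paper simply invokes the literature.
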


For a pseudo-reflection $R$ on $V$, it naturally induces a pseudo-reflection on $V^*$ (also denoted by $R$).
For a fixed non-zero $l_R\in {\rm Im} (I-R)\subset V^*$, there exists a $\Delta_R\in V$ such that
\begin{equation}\label{eq:ref}
(I-R)x=\Delta_R(x)l_R,\ \ \forall x\in V^*.
\end{equation}

Denote also $R:S\rightarrow S$ to be an extension of  $R\in GL(V)$ satisfying
\[
R(k_1 f+k_2h)=k_1R(f)+k_2R(h)\ {\rm and}\  R(fh)=R(f)R(h).
\]
\begin{thm}\label{main12}\cite{IT}
For each $f\in S$, there exists a twisted derivation $D_R:S\rightarrow S$ with respect to $R$
such that
\begin{equation}
R(f)=f-D_R(f)l_R.
\end{equation}
\end{thm}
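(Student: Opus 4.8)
The plan is to construct $D_R$ explicitly as the ``difference quotient'' of $R$ by $l_R$, and then verify the twisted-derivation axioms essentially formally. Concretely, I would define $D_R\colon S\to S$ by declaring $D_R(f)$ to be the unique element of $S$ with $f-R(f)=D_R(f)\,l_R$; the asserted identity $R(f)=f-D_R(f)\,l_R$ then holds by construction, so the content of the theorem reduces to two points: (i) such an element $D_R(f)$ exists and is unique, and (ii) the resulting map $D_R$ satisfies the twisted Leibniz rule \eqref{eq:s-der}.

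For (i) the key is that $l_R$ divides $f-R(f)$ in the polynomial ring $S$. Recall that $S=\mathbb{K}[V]$ is the symmetric algebra on $V^{*}$, hence generated as a $\mathbb{K}$-algebra by $V^{*}$, and that $l_R\in V^{*}$ is irreducible, so $l_R S$ is a prime ideal and $S/l_R S$ is a domain. By \eqref{eq:ref}, $R(x)-x=-\Delta_R(x)\,l_R\in l_R S$ for every $x\in V^{*}$; taking $x=l_R$ gives $R(l_R)=(1-\Delta_R(l_R))\,l_R$, which is nonzero since $R$ is invertible, so $R$ maps the ideal $l_R S$ onto itself. Hence $R$ descends to a ring endomorphism $\bar R$ of $S/l_R S$ that fixes the class of each generator $x\in V^{*}$, forcing $\bar R=\mathrm{id}$, i.e. $f-R(f)\in l_R S$ for all $f\in S$. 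Uniqueness of $D_R(f)$ is automatic because $S$ is a domain and $l_R\neq 0$. Equivalently, one can choose coordinates with $x_1=l_R$ and $x_2,\dots,x_n$ a basis of $\ker(I-R)\subset V^{*}$---a genuine basis of $V^{*}$, since the finiteness of the order of $R$ rules out $\Delta_R(l_R)=0$, which would force $(I-R)^2=0$ and hence make $R$ of infinite order in characteristic zero---so that $R$ scales the monomial $x_1^{a_1}\cdots x_n^{a_n}$ by $(1-\Delta_R(l_R))^{a_1}$, whence $f-R(f)$ has no monomial free of $x_1$ and is thus divisible by $x_1=l_R$.

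For (ii), multiplying the target identity $D_R(fg)=D_R(f)\,g+R(f)\,D_R(g)$ through by $l_R$ turns the left side into $fg-R(fg)=fg-R(f)R(g)$ and the right side into $(f-R(f))\,g+R(f)(g-R(g))=fg-R(f)R(g)$; since $l_R$ is a nonzerodivisor in $S$, cancelling it yields \eqref{eq:s-der}, so $D_R$ is a twisted derivation with respect to $R$.

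I expect the only genuinely non-formal step to be the divisibility $l_R\mid f-R(f)$ in (i): this is precisely where the pseudo-reflection hypothesis---that $\mathrm{Im}(I-R)$ is one-dimensional and spanned by $l_R$---is used, in combination with $R$ being a ring automorphism of $S$. Once that is in place, the existence and uniqueness of the quotient and the Leibniz identity are routine commutative algebra in the UFD $S$.
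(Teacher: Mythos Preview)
Your argument is correct and is essentially the standard one: define $D_R(f)$ as the (unique) quotient $(f-R(f))/l_R$, reduce modulo the prime $l_R$ to see that $R$ induces the identity on $S/l_R S$ and hence that the division is possible, and then read off the twisted Leibniz rule from $fg-R(f)R(g)=(f-R(f))g+R(f)(g-R(g))$. The diagonalisability of $R$ (finite order in characteristic zero) and the identification of $l_R$ with the non-trivial eigenvector are exactly what make the ``change of coordinates'' version go through as well.

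There is, however, nothing in the paper to compare against: Theorem~\ref{main12} is quoted from \cite{IT} without proof. So your write-up is not an alternative to the paper's argument but rather a self-contained justification of a result the authors import. If anything, it is worth remarking that the uniqueness part also gives $\mathbb{K}$-linearity of $D_R$ for free (since $f\mapsto f-R(f)$ is linear and division by a fixed nonzerodivisor is linear), which you use implicitly but do not state; otherwise the proposal is complete.
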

Motivated by Theorem \ref{main12} and Theorem \ref{thm:R-D}, we know that each pseodo-reflection $R$ on $V$ naturally induces a left-Alia algebra $(\mathbb{K}[V],[\cdot,\cdot]_R)$. The following is a concrete example.

\begin{ex}
	Let $R$ be a reflection defined by $R(x_1)=x_2,R(x_2)=x_1,R(x_3)=x_3$ on three-dimensional vector space $V^*$ with a basis $\{x_1,x_2,x_3\}$. On the coordinate ring \mbox{$S=\mathbb{K}[x_1,x_2,x_3]$}  of $V$, $R$ can be also denoted an extension of $R$ satisfying \mbox{$R(fg)=R(f)R(g)$} and $R(k_1 f+k_2h)=k_1R(f)+k_2R(h)$. Let $D$ be the twisted derivation on $S$ induced by the reflection $R$. It follows from Theorem~\ref{main12} that $R(f)=f-D(f)(x_1-x_2)$. Take two polynomials in $S$, denoted by $f=\underset{i}{\sum}k_i f_i,g=\underset{j}{\sum}h_j g_j,$ where $f_i=x_1^{n_{i,1}}x_2^{n_{i,2}}x_3^{n_{i,3}}, g_j=x_1^{m_{j,1}}x_2^{m_{j,2}}x_3^{m_{j,3}}$ are basis of $S$ and $k_i,h_j\in\mathbb{K}$. We have
	\begin{align*}
		D(x_1)=&1, D(x_2)=-1, D(x_3)=0,\\
		D(x_1^{n_1})=&x_1^{n_1-1}+x_1^{n_1-2}x_2+\cdots+x_2^{n_1-1},\\
		D(x_2^{n_2})=&-x_1^{n_2-1}-x_1^{n_2-1}x_2-\cdots-x_2^{n_2-1},\\
		D(x_3^{n_3})=&0,
	\end{align*}
and
\begin{align*}
		D(\underset{i}{\sum} k_ix_1^{n_{i,1}}x_2^{n_{i,2}}x_3^{n_{i,3}})=&\underset{i}{\sum}k_i(D(x_1^{n_{i,1}}x_2^{n_{i,2}})x_3^{n_{i,3}}+R(x_1^{n_{i,1}}x_2^{n_{i,2}})D(x_3^{n_{i,3}}))\\
		=&\underset{i}{\sum} k_i(x_1^{n_{i,1}}D(x_2^{n_{i,2}})+R(x_2^{n_{i,2}})D(x_1^{n_{i,1}}))x_3^{n_{i,3}}\\
		=&\underset{i}{\sum} k_i(x_1^{n_{i,1}+n_{i,2}-1}+x_1^{n_{i,1}+n_{i,2}-2}x_2+\cdots+x_1^{n_{i,2}}x_2^{n_{i,1}-1}\\
		&-x_1^{n_{i,1}+n_{i,2}-1}-\cdots-x_1^{n_1}x_2^{n_2-1})x_3^{n_{i,3}}.
	\end{align*}
	
Moreover, the bilinear map
	\begin{align*}
		[f,g]_R=&\underset{i,j}{\sum}k_ih_j [f_i,g_j]_R\\
		=&\underset{i,j}{\sum}k_ih_j (f_iD(g_j)-R(g_j)D(f_i))\\
		=&\underset{i,j}{\sum}k_ih_j (x_1^{n_{i,1}+m_{j,1}+m_{j,2}-1}x_2^{n_{i,2}}+\cdots+x_1^{n_{i,1}+m_{j,2}}x_2^{n_{i,2}+m_{j,1}-1}\\
		&-x_1^{n_{i,1}+m_{j,1}+m_{j,2}-1}x_2^{n_{i,2}}-\cdots-x_1^{m_{j,1}+n_{i,1}}x_2^{n_{i,2}+m_{j,2}-1}\\
		&-x_1^{m_{j,2}+n_{i,1}+n_{i,2}-1}x_2^{m_{j,1}}-\cdots-x_1^{n_{i,2}+m_{j,2}}x_2^{n_{i,1}+m_{j,1}-1}\\
		&+x_1^{m_{j,2}+n_{i,1}+n_{i,2}-1}x_2^{m_{j,1}}+\cdots+x_1^{n_{i,1}+m_{j,2}}x_2^{n_{i,2}+m_{j,1}-1})x_3^{m_{j,3}+n_{i,3}}
	\end{align*}
makes $(S,[\cdot,\cdot]_R)$ into a left-Alia algebra.
\end{ex}

\subsection{Representation of left-Alia algebras}
In this subsection, we recall the representation theory \cite{Kang-Liu} of left-Alia algebras.

\vspace{6pt}
\begin{defi}
	A \textbf{{representation}} of a left-Alia algebra $(A,[\cdot,\cdot])$ is a triple $(l,r,V)$, where $V$ is a vector space and $l,r:A\rightarrow\mathrm{End}(V)$ are linear maps such that the following equation holds:
	\begin{equation}\label{eq:rep 0-Alia}
		l([x,y])-l([y, x])= r(x)r(y)v-r(y)r(x)+r(y)l(x)v-r(x)l(y),\;\forall x,y\in A, v\in V.
	\end{equation}
	
\end{defi}

%

\begin{pro}
Let $(A,[\cdot,\cdot])$ be a left-Alia algebra, $V$ be a vector space and $l,r:A\rightarrow\mathrm{End}(V)$ be linear maps.
Then $(l,r,V)$ is a representation of $(A,[\cdot,\cdot])$ if and only if there is a left-Alia algebra on the direct sum $ A\oplus V$ of vector spaces given by
\begin{equation}\label{eq:semi-d}
	[x+u,y+v]_d= [x, y]+l(x)v+r(y)u ,\;\forall x,y\in A, u,v\in V.
\end{equation}
In this case, we denote $(A\oplus V,[\cdot,\cdot]_d)=A\ltimes_{l,r}V$.
\end{pro}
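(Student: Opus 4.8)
The plan is to substitute the bilinear map $[\cdot,\cdot]_d$ from \eqref{eq:semi-d} into the symmetric Jacobi identity \eqref{eq:0-Alia} and split the resulting equation into its $A$-component and its $V$-component. Fix arbitrary elements $x+u,\ y+v,\ z+w\in A\oplus V$ and compute the two sides of \eqref{eq:0-Alia} for $[\cdot,\cdot]_d$. Since $[\cdot,\cdot]_d$ restricted to $A\times A$ is just $[\cdot,\cdot]$ and has no component landing in $A$ other than this one, the $A$-valued part of the identity is exactly \eqref{eq:0-Alia} for $(A,[\cdot,\cdot])$ and therefore holds automatically. Consequently $(A\oplus V,[\cdot,\cdot]_d)$ is a left-Alia algebra if and only if the $V$-valued part of the expanded identity vanishes for all $x,y,z\in A$ and $u,v,w\in V$.

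For the $V$-valued part, I would use that $[a,b]_d$ has $V$-component $l(x_a)v_b+r(x_b)u_a$ (where $x_a,u_a$ denote the $A$- and $V$-parts of $a$) and that $[\cdot,\cdot]_d$ has no $V\times V\to V$ component, so every one of the twelve summands obtained by expanding both sides of \eqref{eq:0-Alia} is linear in exactly one of $u,v,w$. Collecting the coefficient of $w$ (equivalently, setting $u=v=0$), a direct computation of the six nested brackets shows that the coefficient on the left-hand side is $l([x,y])+r(x)l(y)+r(y)r(x)$ and on the right-hand side is $l([y,x])+r(x)r(y)+r(y)l(x)$; equating them (for all $w\in V$) is precisely the operator identity \eqref{eq:rep 0-Alia}. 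By the cyclic structure of \eqref{eq:0-Alia}, the coefficients of $u$ and of $v$ yield the same identity with the pair $(x,y)$ replaced by $(y,z)$ and by $(z,x)$ respectively. Hence the $V$-valued part vanishes identically for all $u,v,w$ if and only if \eqref{eq:rep 0-Alia} holds for all $x,y\in A$, and this proves both implications at once (for the ``only if'' direction one needs only the coefficient of $w$, and for the ``if'' direction all three relabeled copies of \eqref{eq:rep 0-Alia} are available).

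The main obstacle is purely bookkeeping: one must expand the six nested brackets on each side of \eqref{eq:0-Alia}, carefully track the $V$-components, and verify that after the $A$-part cancels the remaining terms separate cleanly into the three relabeled copies of \eqref{eq:rep 0-Alia} with no leftover cross terms involving two of $u,v,w$. The structural fact that makes this routine is that $[\cdot,\cdot]_d$ is ``upper triangular'' — it never multiplies two elements of $V$ — so no terms quadratic in $V$ ever appear, and the computation reduces to a finite, mechanical check in $\mathrm{End}(V)$.
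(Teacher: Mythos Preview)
Your argument is correct: expanding the symmetric Jacobi identity for $[\cdot,\cdot]_d$, separating the $A$- and $V$-components, and observing that the $V$-component splits into three cyclic copies of \eqref{eq:rep 0-Alia} (one for each of $u,v,w$) is exactly the right computation, and your stated coefficients of $w$ on the two sides are accurate. Note, however, that the paper does not actually supply a proof of this proposition; it is recalled from the authors' earlier article \cite{Kang-Liu}, so there is no ``paper's proof'' to compare against beyond the implicit expectation that one performs this direct verification.
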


\delete{
An equivalent characterisation of representations on left-Alia algebras is
given in the following.

\begin{pro}
	Let $(A,[\cdot,\cdot])$ be a left-Alia algebra, $V$ be a vector space and $l,r:A\rightarrow\mathrm{End}(V)$ be linear maps.
	Then, $(l,r,V)$ is a representation of $(A,[\cdot,\cdot])$ if and only if there is a left-Alia algebra on the direct sum $d=A\oplus V$ of vector spaces given by
	\begin{equation}\label{eq:semi-d}
		[x+u,y+v]_d= [x, y]+l(x)v+r(y)u ,\;\forall x,y\in A, u,v\in V.
	\end{equation}
	In this case, we denote $(A\oplus V,[\cdot,\cdot]_d)=A\ltimes_{l,r}V$.
\end{pro}

For a vector space $A$ with a bilinear map $[\cdot,\cdot]:A\times A\rightarrow A$, in the case $V=A$, set linear maps $\mathcal{L}_{[\cdot,\cdot]},\mathcal{R}_{[\cdot,\cdot]}:A\rightarrow\mathrm{End}(A)$ using
\begin{equation*}
	\mathcal{L}_{[\cdot,\cdot]}(x)y=[x, y]=\mathcal{R}_{[\cdot,\cdot]}(y)x,\;\forall x,y\in A.
\end{equation*}
}

\begin{ex}
	Let $(A,[\cdot,\cdot])$ be a left-Alia algebra.
	Set linear maps $\mathcal{L}_{[\cdot,\cdot]},\mathcal{R}_{[\cdot,\cdot]}:A\rightarrow\mathrm{End}(A)$ by
	\begin{equation*}
		\mathcal{L}_{[\cdot,\cdot]}(x)y=[x, y]=\mathcal{R}_{[\cdot,\cdot]}(y)x,\;\forall x,y\in A.
	\end{equation*}
	Then, $(\mathcal{L}_{[\cdot,\cdot]},\mathcal{R}_{[\cdot,\cdot]},A)$ is a representation of $(A,[\cdot,\cdot])$, which is called an \textbf{{adjoint representation}}.
\end{ex}


Let $A$ and $V$ be vector spaces. For a linear map $l :A\rightarrow\mathrm{End}(V)$, define a linear map $l^{*}:A\rightarrow\mathrm{End}(V^{*})$ by
\begin{equation*}
	\langle l^{*}(x)u^{*},v\rangle=-\langle u^{*},l(x)v\rangle,\;\forall x\in A, u^{*}\in V^{*}, v\in V.
\end{equation*}

\begin{pro}\label{pro:dual rep}\cite{Kang-Liu}
	Let $(l,r,V)$ be a representation of a left-Alia algebra $(A,[\cdot,\cdot])$. Then,  $( l^{*}, l^{*}-r^{*}, V^{*} )$ is also a representation of $(A,[\cdot,\cdot])$. In particular, $( \mathcal{L}^{*}_{[\cdot,\cdot]},\mathcal{L}^{*}_{[\cdot,\cdot]}-\mathcal{R}^{*}_{[\cdot,\cdot]}, A^{*} )$ is a representation of $(A,[\cdot,\cdot])$, which is called the \textbf{{coadjoint representation}}.
\end{pro}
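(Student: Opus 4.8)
The plan is to verify directly that the triple $(l^{*},\,l^{*}-r^{*},\,V^{*})$ satisfies the defining identity \eqref{eq:rep 0-Alia} of a representation; the whole content is that this identity is stable under dualization precisely when the right action is twisted from $r^{*}$ to $l^{*}-r^{*}$. To keep the bookkeeping light, write $p(x):=l^{*}(x)$ and $q(x):=r^{*}(x)$, and set $L:=p$, $R:=p-q$. Then \eqref{eq:rep 0-Alia} for the triple $(L,R,V^{*})$ is the requirement
\begin{equation*}
	L([x,y])-L([y,x]) = R(x)R(y)-R(y)R(x)+R(y)L(x)-R(x)L(y),\qquad\forall x,y\in A .
\end{equation*}

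First I would record the dual of the representation identity for $(l,r,V)$ itself. Since $\langle l^{*}(x)u^{*},v\rangle=-\langle u^{*},l(x)v\rangle$ and likewise for $r$, we have $l(x)^{\top}=-p(x)$ and $r(x)^{\top}=-q(x)$, where $(-)^{\top}$ denotes the transpose with respect to the canonical pairing; using $(\phi\psi)^{\top}=\psi^{\top}\phi^{\top}$, applying $(-)^{\top}$ to \eqref{eq:rep 0-Alia} turns each product of two operators into a product of the corresponding starred maps in reversed order with no sign (the two minus signs cancel), while the term $l([x,y])-l([y,x])$ picks up an overall minus sign. After multiplying the resulting relation by $-1$ one obtains the auxiliary identity
\begin{equation}\label{eq:aux-dual}
	p([x,y])-p([y,x]) = q(x)q(y)-q(y)q(x)+p(y)q(x)-p(x)q(y),\qquad\forall x,y\in A .
\end{equation}

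Next I would expand the right-hand side of the displayed requirement after substituting $R=p-q$. Writing out $(p(x)-q(x))(p(y)-q(y))-(p(y)-q(y))(p(x)-q(x))$ together with $(p(y)-q(y))p(x)-(p(x)-q(x))p(y)$ and collecting, the terms $p(x)p(y),\,p(y)p(x)$ and the mixed terms $q(y)p(x),\,q(x)p(y)$ cancel in pairs, and what remains is exactly $q(x)q(y)-q(y)q(x)+p(y)q(x)-p(x)q(y)$. By \eqref{eq:aux-dual} this equals $p([x,y])-p([y,x])=L([x,y])-L([y,x])$, so the requirement holds and $(l^{*},l^{*}-r^{*},V^{*})$ is a representation. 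For the last assertion, apply what has just been proved to the adjoint representation $(\mathcal{L}_{[\cdot,\cdot]},\mathcal{R}_{[\cdot,\cdot]},A)$ of the preceding Example: its dual $(\mathcal{L}^{*}_{[\cdot,\cdot]},\,\mathcal{L}^{*}_{[\cdot,\cdot]}-\mathcal{R}^{*}_{[\cdot,\cdot]},\,A^{*})$ is a representation, and this is by definition the coadjoint representation.

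There is no conceptual obstruction here; the argument is a finite manipulation of endomorphisms of $V^{*}$. The only delicate points are the signs produced by the convention $l^{*}(x)=-l(x)^{\top}$, so that a composite of two dualized operators is sign-neutral while a single one is not, and checking that the potential obstruction terms (those quadratic in $p$ and those mixing $p$ and $q$) genuinely cancel once $R=l^{*}-r^{*}$ is inserted — it is exactly this cancellation that forces the twist to be $l^{*}-r^{*}$ rather than, say, $-r^{*}$. Alternatively, the same computation can be organized as a check of the symmetric Jacobi identity \eqref{eq:0-Alia} for the semidirect product $A\ltimes_{l^{*},\,l^{*}-r^{*}}V^{*}$, in which the only nontrivial case has a single argument drawn from $V^{*}$ and again reduces to \eqref{eq:aux-dual}.
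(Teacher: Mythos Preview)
Your proof is correct. The paper does not actually supply its own proof of this proposition: it is quoted from the earlier paper \cite{Kang-Liu} and stated without argument. Your direct verification --- dualizing \eqref{eq:rep 0-Alia} to obtain the auxiliary identity in $p=l^{*}$ and $q=r^{*}$, then expanding the representation condition for $(p,\,p-q,\,V^{*})$ and observing the quadratic-in-$p$ and mixed terms cancel --- is exactly the natural route, and the sign bookkeeping under the convention $l^{*}(x)=-l(x)^{\top}$ is handled correctly. The alternative you mention at the end, checking \eqref{eq:0-Alia} on $A\ltimes_{l^{*},\,l^{*}-r^{*}}V^{*}$, is equivalent and is the characterization the paper records just before this proposition.
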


\delete{
\begin{rmk}
$( \mathcal{L}^{*}_{[\cdot,\cdot]},\mathcal{L}^{*}_{[\cdot,\cdot]}-\mathcal{R}^{*}_{[\cdot,\cdot]}, A^{*} )$ is a representation of $(A,[\cdot,\cdot])$, which is called the \textbf{{coadjoint representation}}.
\end{rmk}}

%

%

\delete{
	\begin{Remark}
		Let $\g$ be a Lie algebra and $\g^*$ be its dual vector space. If a linear map $\ad^{*}:\g\rightarrow\mathrm{End}(\g^*)$ satisfying
		\begin{equation}\label{eq:ad}
			\langle \ad^{*}(x)\xi,y\rangle=-\langle \xi,\ad(x)y\rangle,\;\forall x,y\in A, \xi\in \g^{*}.
		\end{equation}
		Then $\ad^*$ is a representation of $\g$ in $\g^*$, which is called the coadjoint
		representation of $\g$. In Proposition~\ref{pro:dual rep}, if the left-Alia algebra $(A,[\cdot,\cdot])$ is antisymmetric, that is for all $x\in A$ $(\mathcal{L}_{[\cdot,\cdot]}-\mathcal{R}_{[\cdot,\cdot]})(x)=2\mathcal{L}_{[\cdot,\cdot]}(x)$, then $\mathcal{L}_{[\cdot,\cdot]}-\mathcal{R}_{[\cdot,\cdot]}$ is a adjoint
		representation of Lie algebra. Furthermore, by \eqref{eq:ad}, $\mathcal{L}_{[\cdot,\cdot]}^*$ and
		$\mathcal{L}^{*}_{[\cdot,\cdot]}-\mathcal{R}^{*}_{[\cdot,\cdot]}$ are coadjoint
		representations of Lie algebras.
\end{Remark}}

\section{Left-Alia bialgebras}
In this section, we recall some basic results on left-Alia bialgebras \cite{Kang-Liu}.
\begin{defi}\label{qua}
	A \textbf{{quadratic left-Alia algebra}} is a triple $(A,[\cdot,\cdot],\mathcal{B})$, where $(A,[\cdot,\cdot])$ is a left-Alia algebra and $\mathcal{B}$ is a nondegenerate symmetric bilinear form on $A$ which is invariant in the sense that
	
	\vspace{-6pt}\begin{equation}\label{eq:quad}
		\mathcal{B}([x, y],z)=\mathcal{B}(x,[z, y]-[y, z]),\;\forall x,y,z\in A.
	\end{equation}
\end{defi}
	
\delete{
\begin{rmk}
The symmetric property of $\mathcal{B}$ implies that
\begin{equation*}\label{eq:inv}
    \mathcal{B}([x, y],z)+\mathcal{B}(y,[x, z])=0,\;\forall x,y,z\in A.
\end{equation*}
\end{rmk}}



There is a typical class of quadratic left-Alia algebras realized from Theorem \ref{special}.

\begin{pro}\label{pro:2.8}
	Let $(A,\cdot)$ be a commutative associative algebra and $f:A\rightarrow A$ be a linear map.
	Let $\mathcal{B}$ be a nondegenerate symmetric invariant bilinear form on $(A,\cdot)$
	and $\hat{f}:A\rightarrow A$ be the adjoint map of $f$ with respect to $\mathcal{B}$, given by
	
	\vspace{-6pt}\begin{equation*}
		\mathcal{B}\big(\hat{f}(x),y\big)=\mathcal{B}\big(x,f(y)\big),\;\forall x,y\in A.
	\end{equation*}
	Then there is a quadratic left-Alia algebra $(A,[\cdot,\cdot],\mathcal{B})$, where $(A,[\cdot,\cdot])$ is the special left-Alia algebra with respect to $(A,\cdot,f,-\hat f)$, that is,
	
	\vspace{-6pt}\begin{equation}\label{eq:hat}
		[x, y]=x\cdot f(y)-\hat{f}(x\cdot y).
	\end{equation}
\end{pro}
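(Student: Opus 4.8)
The plan is to verify the two defining conditions separately: first that $(A,[\cdot,\cdot])$ with the bracket \eqref{eq:hat} is indeed a left-Alia algebra, and then that $\mathcal{B}$ is nondegenerate, symmetric, and invariant in the sense of \eqref{eq:quad}. The first point is immediate: by Theorem \ref{special}, the bracket $[x,y]=x\cdot f(y)+g(x\cdot y)$ defines a special left-Alia algebra for any linear maps $f,g$, so taking $g=-\hat f$ gives a left-Alia algebra with bracket exactly \eqref{eq:hat}. Nondegeneracy and symmetry of $\mathcal{B}$ are inherited directly from the hypothesis that $\mathcal{B}$ is a nondegenerate symmetric invariant bilinear form on $(A,\cdot)$. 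So the only real content is the invariance identity \eqref{eq:quad}.

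For the invariance, I would compute both sides of $\mathcal{B}([x,y],z)=\mathcal{B}(x,[z,y]-[y,z])$ using \eqref{eq:hat}, the commutativity and associativity of $\cdot$, the invariance of $\mathcal{B}$ on $(A,\cdot)$ — i.e. $\mathcal{B}(a\cdot b,c)=\mathcal{B}(a,b\cdot c)$ — and the defining property $\mathcal{B}(\hat f(a),b)=\mathcal{B}(a,f(b))$ of the adjoint. Expanding the left side,
\begin{align*}
\mathcal{B}([x,y],z)&=\mathcal{B}\big(x\cdot f(y)-\hat f(x\cdot y),z\big)
=\mathcal{B}\big(x\cdot f(y),z\big)-\mathcal{B}\big(\hat f(x\cdot y),z\big)\\
&=\mathcal{B}\big(x,f(y)\cdot z\big)-\mathcal{B}\big(x\cdot y,f(z)\big)
=\mathcal{B}\big(x,f(y)\cdot z\big)-\mathcal{B}\big(x,y\cdot f(z)\big).
\end{align*}
For the right side, using $[z,y]=z\cdot f(y)-\hat f(z\cdot y)$ and $[y,z]=y\cdot f(z)-\hat f(y\cdot z)$ and that $y\cdot z=z\cdot y$, one gets $[z,y]-[y,z]=z\cdot f(y)-y\cdot f(z)$, so
\[
\mathcal{B}\big(x,[z,y]-[y,z]\big)=\mathcal{B}\big(x,z\cdot f(y)\big)-\mathcal{B}\big(x,y\cdot f(z)\big),
\]
which matches the left side since $f(y)\cdot z=z\cdot f(y)$. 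Hence \eqref{eq:quad} holds.

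There is essentially no obstacle here — the argument is a short direct computation, and the main thing to be careful about is the bookkeeping of signs and the correct use of the adjoint relation to move $\hat f$ across $\mathcal{B}$ (turning it into $f$ on the other argument), together with commutativity to identify $f(y)\cdot z$ with $z\cdot f(y)$. The choice $g=-\hat f$ in Theorem \ref{special} is precisely what makes the two $\hat f$-terms cancel against the corresponding $f$-terms, which is the reason this particular special left-Alia algebra is quadratic.
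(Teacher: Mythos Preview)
Your proof is correct. The paper itself does not give a proof of this proposition---it is stated in Section~3 as a recalled result from \cite{Kang-Liu}---so there is nothing to compare against, but your direct verification (invoking Theorem~\ref{special} for the left-Alia structure and then checking \eqref{eq:quad} via the invariance of $\mathcal{B}$ on $(A,\cdot)$ and the adjoint relation for $\hat f$) is exactly the natural argument one would expect.
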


\delete{
\begin{ex}\label{ex:2.9}
Let $(A,[\cdot,\cdot])$ be a left-Alia algebra and $(\mathcal{L}_{[\cdot,\cdot]},\mathcal{R}_{[\cdot,\cdot]},A)$ be the adjoint representation of $(A,[\cdot,\cdot])$.
By Proposition \ref{pro:semi} and Proposition \ref{pro:dual rep}, there is a left-Alia algebra $A\ltimes_{\mathcal{L}^{*}_{[\cdot,\cdot]}, \mathcal{L}^{*}_{[\cdot,\cdot]}-\mathcal{R}^{*}_{[\cdot,\cdot]}}A^{*}$ on $d=A\oplus A^*$  given by \eqref{eq:semi-d}.
There is a natural nondegenerate symmetric bilinear form $\mathcal{B}_{d}$ on $A\oplus A^{*}$ given by
\begin{equation}\label{eq:Bd}
    \mathcal{B}_{d}(x+a^{*},y+b^{*})=\langle x, b^{*}\rangle+\langle a^{*},y\rangle,\;\forall x,y\in A, a^{*},b^{*}\in A^{*}.
\end{equation}
For all $x,y,z\in A, a^{*},b^{*},c^{*}\in A^{*}$, we have
\begin{eqnarray*}
\mathcal{B}_{d}\big([(x+a^{*}),(y+b^{*})]_d,z+c^{*})
&=&\mathcal{B}_{d}\big([x, y]+\mathcal{L}^{*}_{[\cdot,\cdot]}(x)b^{*}+(\mathcal{L}^{*}_{[\cdot,\cdot]}-\mathcal{R}^{*}_{[\cdot,\cdot]})(y)a^{*},z+c^{*}\big)\\
&=&\langle [x, y],c^{*}\rangle+\langle \mathcal{L}^{*}_{[\cdot,\cdot]}(x)b^{*}+(\mathcal{L}^{*}_{[\cdot,\cdot]}-\mathcal{R}^{*}_{[\cdot,\cdot]})(y)a^{*},z\rangle\\
&=&\langle [x,y],c^{*}\rangle-\langle [x,z], b^{*}\rangle+\langle a^{*},[z,y]-[y,z]\rangle,\\
 \mathcal{B}_{d}\big(x+a^{*},[z+c^{*},x+b^{*}]_d\big)&=&\langle [z,y], a^{*}\rangle-\langle [z,x], b^{*}\rangle+\langle c^{*}, [y,x]-[x, y]\rangle,\\
 \mathcal{B}_{d}\big(x+a^{*},[y+b^{*},z+c^{*}]_d\big)&=&\langle [y,z], a^{*}\rangle-\langle [y,x], c^{*}\rangle+\langle b^{*}, [z,x]-[x,z]\rangle.
\end{eqnarray*}
Hence we have
\begin{equation*}
\mathcal{B}_{d}\big( [x+a^{*},y+b^{*}]_d,z+c^{*} \big)=\mathcal{B}_{d}\big( x+a^{*}, [z+c^{*},y+b^{*}]_d- [y+b^{*},z+c^{*}]_d\big),
\end{equation*}
and thus $(A\ltimes_{\mathcal{L}^{*}_{[\cdot,\cdot]}, \mathcal{L}^{*}_{[\cdot,\cdot]}-\mathcal{R}^{*}_{[\cdot,\cdot]}}A^{*},\mathcal{B}_{d})$ is a quadratic left-Alia algebra.
\end{ex}
}

\begin{defi}\label{Manin}
Let $(A,[\cdot,\cdot]_A )$ and $(A^{*},[\cdot,\cdot]_{A^{*}} )$ be left-Alia algebras.
Assume  that there is a left-Alia algebra structure $(A \oplus A^{*},[\cdot,\cdot]_d )$ on $A\oplus A^{*}$  which contains $(A,[\cdot,\cdot]_A )$ and $(A^{*},[\cdot,\cdot]_{A^{*}} )$ as left-Alia subalgebras.
Suppose that  the natural nondegenerate symmetric bilinear form $\mathcal{B}_{d}$ given by
\begin{equation}\label{eq:Bd}
	\mathcal{B}_{d}(x+a^{*},y+b^{*})=\langle x,b^{*}\rangle+\langle a^{*},y\rangle,\;\forall x,y\in A, a^{*},b^{*}\in A^{*}.
\end{equation}
  is invariant on $(A \oplus A^{*},[\cdot,\cdot]_d )$, that is, $(A \oplus A^{*},[\cdot,\cdot]_{d},\mathcal{B}_{d} )$ is a quadratic left-Alia algebra.
Then we say $\big( (A\oplus A^{*},[\cdot,\cdot]_{d},\mathcal{B}_{d}), A, A^{*} \big)$ is a {\bf Manin triple of left-Alia algebras (with respect to the symmetric invariant bilinear form)}.
\end{defi}
\begin{defi}\label{defi:anti-pre-Lie coalgebras}
	A  \textbf{{left-Alia coalgebra}} is a pair $(A,\delta)$, such that $A$ is a vector space and 
	$\delta:A\rightarrow A\otimes A$ is a co-multiplication satisfying
	\begin{equation}\label{eq:defi:coalgebra}
		(\mathrm{id}^{\otimes 3}+\xi+\xi^{2})(\tau\otimes \mathrm{id}-\mathrm{id}^{\otimes 3})(\delta\otimes \mathrm{id})\delta=0,
	\end{equation}
	where $\tau(x\otimes y)=y\otimes x$ and $\xi(x\otimes y\otimes z)=y\otimes z\otimes x$ for all $x,y,z\in A$.
\end{defi}
\begin{pro}\label{pro:coalgebras}
	Let $A$ be a vector space and $\delta:A\rightarrow A\otimes A$ be a co-multiplication.
	Let $[\cdot,\cdot]_{A^*} :A^{*}\otimes A^{*}\rightarrow A^{*}$ be the linear dual of $\delta$, that is,
	\begin{equation}\label{db}
		\langle [a^{*}, b^{*}]_{A^*},x\rangle=\langle\delta^{*}(a^{*}\otimes b^{*}),x\rangle=\langle a^{*}\otimes b^{*},\delta(x)\rangle, \;\;\forall a^{*},b^{*}\in A^{*}, x\in A.
	\end{equation}
	Then $(A,\delta)$ is a left-Alia coalgebra if and only if $(A^{*},[\cdot,\cdot]_{A^{*}} )$ is a left-Alia algebra.
\end{pro}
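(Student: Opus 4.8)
The plan is to dualize the coalgebra identity \eqref{eq:defi:coalgebra} factor by factor and to recognize the outcome as the symmetric Jacobi identity \eqref{eq:0-Alia} for the bracket $[\cdot,\cdot]_{A^{*}}$. Set
\[
F:=(\mathrm{id}^{\otimes 3}+\xi+\xi^{2})(\tau\otimes \mathrm{id}-\mathrm{id}^{\otimes 3})(\delta\otimes \mathrm{id})\delta\colon A\longrightarrow A^{\otimes 3},
\]
so that $(A,\delta)$ is a left-Alia coalgebra exactly when $F=0$. A linear map into $A^{\otimes 3}$ vanishes if and only if its transpose does, and since all spaces are finite-dimensional we may identify $(A^{\otimes 3})^{*}\cong (A^{*})^{\otimes 3}$; hence it suffices to compute $F^{*}\colon (A^{*})^{\otimes 3}\to A^{*}$ and to check that $F^{*}=0$ is precisely the condition that $(A^{*},[\cdot,\cdot]_{A^{*}})$ satisfy the symmetric Jacobi property, i.e.\ be a left-Alia algebra.

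First I would transpose the individual building blocks of $F$. By the definition \eqref{db} of the dual bracket, the transpose of $\delta\colon A\to A^{\otimes 2}$ is the multiplication $[\cdot,\cdot]_{A^{*}}\colon (A^{*})^{\otimes 2}\to A^{*}$, so the transpose of $(\delta\otimes\mathrm{id})\delta$ is the iterated bracket $\mu\colon a^{*}\otimes b^{*}\otimes c^{*}\mapsto [[a^{*},b^{*}]_{A^{*}},c^{*}]_{A^{*}}$. The operators $\tau\otimes\mathrm{id}$ and $\mathrm{id}^{\otimes 3}+\xi+\xi^{2}$ are assembled from permutations of the tensor legs, whose transposes are the operators of the inverse permutations; since $\tau\otimes\mathrm{id}$ is an involution it equals its own transpose, and since transposing merely interchanges the two cyclic permutations $\xi$ and $\xi^{2}$ and fixes $\mathrm{id}^{\otimes 3}$, the cyclic symmetrizer $\mathrm{id}^{\otimes 3}+\xi+\xi^{2}$ is also self-transpose.

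Finally I would assemble $F^{*}=\mu\circ(\tau\otimes \mathrm{id}-\mathrm{id}^{\otimes 3})\circ(\mathrm{id}^{\otimes 3}+\xi+\xi^{2})$, where the composition order reverses and $\tau,\xi,\mathrm{id}$ now act on $(A^{*})^{\otimes 3}$ by the same formulas. Applying $\mathrm{id}^{\otimes 3}+\xi+\xi^{2}$ to $a^{*}\otimes b^{*}\otimes c^{*}$ gives the sum of its three cyclic rotations; feeding each through $\tau\otimes\mathrm{id}-\mathrm{id}^{\otimes 3}$ and then $\mu$, and collecting the six terms, yields
\[
F^{*}(a^{*}\otimes b^{*}\otimes c^{*})=[[b^{*},a^{*}],c^{*}]+[[a^{*},c^{*}],b^{*}]+[[c^{*},b^{*}],a^{*}]-[[a^{*},b^{*}],c^{*}]-[[c^{*},a^{*}],b^{*}]-[[b^{*},c^{*}],a^{*}],
\]
with every bracket $[\cdot,\cdot]_{A^{*}}$. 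Requiring this to vanish for all $a^{*},b^{*},c^{*}$ and renaming $a^{*},b^{*},c^{*}$ as $x,y,z$ is exactly \eqref{eq:0-Alia} for $(A^{*},[\cdot,\cdot]_{A^{*}})$. Therefore $F=0$ if and only if $(A^{*},[\cdot,\cdot]_{A^{*}})$ is a left-Alia algebra, which is the assertion. The only point that needs care is the combinatorial bookkeeping of the permutations — getting the transpose of $\xi$ right and verifying that combining the cyclic sum with $\tau\otimes\mathrm{id}-\mathrm{id}^{\otimes 3}$ reproduces precisely the left- and right-hand sides of \eqref{eq:0-Alia} rather than some other rearrangement; everything else is mechanical.
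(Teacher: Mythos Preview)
Your proof is correct. The paper does not supply its own proof of this proposition --- it is recalled from \cite{Kang-Liu} --- so there is nothing to compare against; your dualization argument is the standard one, and the bookkeeping of the permutation transposes and of the six bracket terms is carried out correctly.
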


\begin{defi}\label{bialgebra}
	A \textbf{{left-Alia bialgebra}} is a triple $(A,[\cdot,\cdot],\delta)$, such that
	$(A,[\cdot,\cdot])$ is  a left-Alia algebra, $(A,\delta)$ is a left-Alia coalgebra and the following equation holds:
	\begin{equation}\label{eq:bialg}
		(\tau-\mathrm{id}^{2})\big( \delta([x,y]-[y,x])+(\mathcal{R}_{[\cdot,\cdot]}(x)\otimes\mathrm{id})\delta(y)-(\mathcal{R}_{[\cdot,\cdot]}(y)\otimes\mathrm{id})\delta(x) \big)=0,\;\forall x,y\in A.
	\end{equation}
\end{defi}
The following theorem figures out the one-one correspondence between left-Alia bialgebras and Manin triples of left-Alia algebras.
\begin{thm}
 Let $(A,[\cdot,\cdot]_A )$ be a left-Alia algebra. Suppose that there is a left-Alia algebra structure $(A^{*},[\cdot,\cdot]_{A^{*}} )$ on the dual space $A^{*}$, and $\delta:A\rightarrow A\otimes A$ is the linear dual of $[\cdot,\cdot]_{A^*}$.
 Then $(A,[\cdot,\cdot]_A,\delta)$ is a left-Alia bialgebra
 if and only if  there is a Manin triple of left-Alia algebras $\big( (A\oplus A^{*},[\cdot,\cdot]_{d},\mathcal{B}_{d}), A, A^{*} \big)$.
 In this case, the multiplication $[-,-]_{d}$ on $A\oplus A^{*}$ is given by
 \begin{eqnarray*}
 [x+a^{*},y+b^{*}]_d&=&[x, y]_A+\mathcal{L}^{*}_{[\cdot,\cdot]_{A^*}}(a^{*})y+(\mathcal{L}^{*}_{[\cdot,\cdot]_{A^*}}-\mathcal{R}^{*}_{[\cdot,\cdot]_{A^*}})(b^{*})x\nonumber\\
 &&+[a^{*}, b^{*}]_{A^*}+\mathcal{L}^{*}_{[\cdot,\cdot]_A}(x)b^{*}+(\mathcal{L}^{*}_{[\cdot,\cdot]_{A }}-\mathcal{R}^{*}_{[\cdot,\cdot]_{A }})(y)a^{*},\label{eq:mp dual rep}
 \end{eqnarray*}
 for all $x,y\in A, a^{*},b^{*}\in A^{*}$.
\end{thm}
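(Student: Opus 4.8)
The plan is to establish both directions at once, by first pinning down the bracket on $A\oplus A^{*}$ and then reducing its symmetric Jacobi identity to \eqref{eq:bialg}. For the ``only if'' direction, start from a Manin triple $\big((A\oplus A^{*},[\cdot,\cdot]_{d},\mathcal{B}_{d}),A,A^{*}\big)$: the subalgebra hypothesis forces $[x,y]_{d}=[x,y]_{A}$ and $[a^{*},b^{*}]_{d}=[a^{*},b^{*}]_{A^{*}}$, so only the four ``mixed'' components of $[x,b^{*}]_{d}$ and $[b^{*},x]_{d}$ remain to be identified. I would determine these by feeding the invariance identity \eqref{eq:quad} the triples $(x,y,c^{*})$, $(a^{*},b^{*},z)$, $(x,b^{*},c^{*})$ and $(a^{*},y,z)$, with $x,y,z\in A$ and $a^{*},b^{*},c^{*}\in A^{*}$: since $\mathcal{B}_{d}$ annihilates $A\times A$ and $A^{*}\times A^{*}$, each instance collapses to a pairing identity which, using the defining relations of $\mathcal{L}^{*}_{[\cdot,\cdot]}$ and $\mathcal{R}^{*}_{[\cdot,\cdot]}$, forces the mixed components to be exactly the coadjoint operators occurring in the displayed formula. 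Reading these same computations backwards shows conversely that the bracket defined by the displayed formula always makes $\mathcal{B}_{d}$ invariant, for any brackets on $A$ and $A^{*}$, with $A$ and $A^{*}$ manifestly subalgebras. Hence a Manin triple structure with the canonical pairing $\mathcal{B}_{d}$ exists if and only if the formula bracket $[\cdot,\cdot]_{d}$ satisfies the symmetric Jacobi identity, and in that case $[\cdot,\cdot]_{d}$ is necessarily the displayed one.

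It then remains to show: given that $A$ and $A^{*}$ are left-Alia and $\delta$ is the linear dual of $[\cdot,\cdot]_{A^{*}}$, the formula bracket $[\cdot,\cdot]_{d}$ satisfies the symmetric Jacobi identity if and only if \eqref{eq:bialg} holds. Indeed $(A,\delta)$ is automatically a left-Alia coalgebra by Proposition \ref{pro:coalgebras}, and by Definition \ref{bialgebra} equation \eqref{eq:bialg} is then the only remaining bialgebra axiom. Write $J_{d}$ for the Jacobiator of $[\cdot,\cdot]_{d}$; it is totally antisymmetric in its three arguments, so by multilinearity it suffices to test it on triples of the four types $(x,y,z)$, $(x,y,c^{*})$, $(x,b^{*},c^{*})$, $(a^{*},b^{*},c^{*})$. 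On $(x,y,z)$ one has $J_{d}(x,y,z)=J_{A}(x,y,z)=0$ and on $(a^{*},b^{*},c^{*})$ one has $J_{d}(a^{*},b^{*},c^{*})=J_{A^{*}}(a^{*},b^{*},c^{*})=0$, both holding by hypothesis.

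For the two mixed types I would expand $J_{d}(x,y,c^{*})$ and $J_{d}(x,b^{*},c^{*})$, split each into its $A$-valued and $A^{*}$-valued parts, and use that $\big(\mathcal{L}^{*}_{[\cdot,\cdot]_{A}},\mathcal{L}^{*}_{[\cdot,\cdot]_{A}}-\mathcal{R}^{*}_{[\cdot,\cdot]_{A}},A^{*}\big)$ and $\big(\mathcal{L}^{*}_{[\cdot,\cdot]_{A^{*}}},\mathcal{L}^{*}_{[\cdot,\cdot]_{A^{*}}}-\mathcal{R}^{*}_{[\cdot,\cdot]_{A^{*}}},A\big)$ are representations (Proposition \ref{pro:dual rep}) to annihilate all the terms not built out of $\delta$ (respectively, not out of $[\cdot,\cdot]_{A}$). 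Pairing the surviving terms against arbitrary test elements of $A$ and of $A^{*}$ and invoking \eqref{db} identifies the remaining relation with \eqref{eq:bialg}; and since \eqref{eq:bialg} is symmetric under the interchange $A\leftrightarrow A^{*}$ (equivalently $[\cdot,\cdot]_{A}\leftrightarrow\delta$ under the dualities), the condition obtained from $(x,b^{*},c^{*})$ coincides with the one obtained from $(x,y,c^{*})$. Collecting the four cases gives the desired equivalence, and combining it with the first paragraph proves the theorem, with $[\cdot,\cdot]_{d}$ the displayed formula.

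The main obstacle is the computation in the last step: the six-term Jacobiator on a mixed triple unfolds into a large array of doubly-nested coadjoint operators for both brackets, and one must check that the representation axiom \eqref{eq:rep 0-Alia}, applied to the two coadjoint representations, collapses exactly the ``extraneous'' part, so that the single self-dual tensor relation \eqref{eq:bialg} survives and no spurious extra constraint appears. I would keep this manageable by dualizing every intermediate operator identity immediately into a scalar identity among pairings $\langle\,\cdot\,,\,\cdot\,\rangle$, using the defining relations of $\mathcal{L}^{*}_{[\cdot,\cdot]},\mathcal{R}^{*}_{[\cdot,\cdot]}$ and \eqref{db}; equivalently, one can route the whole argument through a notion of matched pair of left-Alia algebras, verifying that the Manin triple corresponds precisely to the matched pair built from the two coadjoint representations and that the matched-pair compatibility conditions unwind to \eqref{eq:bialg}.
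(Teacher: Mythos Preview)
The paper does not prove this theorem at all: Section~3 merely recalls it from the companion article \cite{Kang-Liu}, where the equivalence is established via matched pairs of left-Alia algebras. Your outline is the standard argument for such a result and is correct; in particular, your observation that the Jacobiator $J_{d}$ is totally antisymmetric (cyclic symmetry plus antisymmetry in the first two slots) is right, so the reduction to the four triple-types is legitimate, and your closing remark about routing the computation through matched pairs is exactly how \cite{Kang-Liu} proceeds.

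One point that deserves a bit more care than you give it: the claim that the condition extracted from $(x,b^{*},c^{*})$ coincides with that from $(x,y,c^{*})$ because \eqref{eq:bialg} is ``self-dual under $A\leftrightarrow A^{*}$'' is not quite automatic. The tensor identity \eqref{eq:bialg} involves $\delta$ and $\mathcal{R}_{[\cdot,\cdot]_{A}}$ asymmetrically, and one has to verify explicitly that pairing it against $a^{*}\otimes b^{*}$ and then relabelling yields the same scalar identity as the dual version written on $A^{*}$. This amounts to a short but nontrivial symmetry check (or, in the matched-pair language, to the observation that the two compatibility conditions for the pair of coadjoint representations are exchanged by duality); without it you would a priori obtain two bialgebra-type constraints rather than one. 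Once that is checked your argument closes.
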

  \delete{
 		\begin{align}
		[x+a^{*},y+b^{*}]_d&=[x, y]_A+\mathcal{L}^{*}_{[\cdot,\cdot]_{A^*}}(a^{*})y+(\mathcal{L}^{*}_{[\cdot,\cdot]_{A^*}}-\mathcal{R}^{*}_{[\cdot,\cdot]_{A^*}})(b^{*})x\nonumber\\
		&+[a^{*}, b^{*}]_{A^*}+\mathcal{L}^{*}_{[\cdot,\cdot]_A}(x)b^{*}+(\mathcal{L}^{*}_{[\cdot,\cdot]_{A }}-\mathcal{R}^{*}_{[\cdot,\cdot]_{A }})(y)a^{*},\label{eq:mp dual rep}
	\end{align}
 and $\mathcal{B}_{d}$ is defined by (\ref{eq:Bd}).
 Then, $(A,[\cdot,\cdot]_A,\delta)$ is a left-Alia bialgebra
 if and only if $\big( d=(A\oplus A^{*},[\cdot,\cdot]_{d},\mathcal{B}_{d}), A, A^{*} \big)$ is a Manin triple.}

\delete{
\begin{ex}
	Let $(A,[\cdot,\cdot]_{A})$ be the three-dimensional left-Alia algebra given in Example~\ref{ex:3-left}.
	Then, there is a left-Alia bialgebra $(A,[\cdot,\cdot]_{A},\delta)$ with a non-zero co-multiplication $\delta$ on $A$, given by
	\begin{equation*}
		\delta(e_{1})=e_{1}\otimes e_{1}.
	\end{equation*}
	Then,  there is a Manin triple $\big( (A\oplus A^{*},[\cdot,\cdot],\mathcal{B}_{d}), A, A^{*}\big)$. 
where
	\begin{equation*}
		[e^{*}_{1},e^{*}_{1}]_{A^{*}}=e^{*}_{1},
	\end{equation*}
	and the multiplication $[\cdot,\cdot]$ on $A\oplus A^{*}$ is given by \eqref{eq:mp dual rep}.
\end{ex}
}

\section{Triangular left-Alia bialgebras and the left-Alia Yang-Baxter equation}

In this section, we introduce the notion of the left-Alia Yang-Baxter equation in a left-Alia algebra.
An antisymmetric solution of the left-Alia Yang-Baxter equation gives rise to a left-Alia bialgebra which we call triangular.


\begin{defi}
Let $(A,[\cdot,\cdot])$ be a left-Alia algebra and $r=\sum\limits_{i}u_{i}\otimes v_{i}\in A\otimes A$.
Set
\begin{equation}
    \mathrm{Al}(r)=\sum_{i,j}[u_{i},u_{j}]\otimes v_{i}\otimes v_{j}+u_{i}\otimes( [u_{j}, v_{i}]-[v_{i}, u_{j}] )\otimes v_{j}-u_{i}\otimes u_{j}\otimes [v_{j}, v_{i}]  \in A\otimes A\otimes A.
\end{equation}
We say $r$ is a solution of the {\bf left-Alia Yang-Baxter equation} if $\mathrm{Al}(r)=0$.
\end{defi}

\begin{pro}\label{pro:4.2}
Let $(A,[\cdot,\cdot])$ be a left-Alia algebra and $r=\sum\limits_{i}u_{i}\otimes v_{i}\in A\otimes A$.
Let $h:A\rightarrow\mathrm{End}(A\otimes A)$ be a linear map given by
\begin{equation}
	h(x)= (\mathcal{R}_{[\cdot,\cdot]}-\mathcal{L}_{[\cdot,\cdot]})(x)\otimes\mathrm{id}-\mathrm{id}\otimes\mathcal{R}_{[\cdot,\cdot]}(x),\;\forall x\in A.
\end{equation}
Set a co-multiplication $\delta_{r}:A \rightarrow A\otimes A$ by
\begin{equation}\label{eq:co}
   \delta_{r}(x)=h(x)r=\big((\mathcal{R}_{[\cdot,\cdot]}-\mathcal{L}_{[\cdot,\cdot]})(x)\otimes\mathrm{id}-\mathrm{id}\otimes\mathcal{R}_{[\cdot,\cdot]}(x)\big)r,\;\forall x\in A.
\end{equation}
\begin{enumerate}
    \item For all $x\in A$, \eqref{eq:defi:coalgebra} holds if and only if
\begin{eqnarray}
&&(\mathrm{id}+\xi+\xi^{2})\bigg( \big(\mathrm{id}\otimes\mathrm{id}\otimes\mathcal{R}_{[\cdot,\cdot]}(x)\big) (\mathrm{id}\otimes\tau)\mathrm{Al}(r)+
\sum_{j} h([u_{j}, x]-[x, u_{j}])\big(r+\tau(r)\big)\otimes v_{j}\notag\\
&&+\tau h(u_{j})\big( r+\tau(r)\big) \otimes v_{j}
+ v_{j}\otimes\big(\mathcal{R}_{[\cdot,\cdot]}(u_{j})\otimes\mathrm{id}\big)h(x)\big(r+\tau(r)\big)\notag\\
&&+\big(\mathrm{id}\otimes\mathcal{R}_{[\cdot,\cdot]}(v_{j})\otimes\mathcal{R}_{[\cdot,\cdot]}(x)\big)\Big(u_{j}\otimes\big(r+\tau(r)\big)\Big)\nonumber\\
&&+
\big(\mathrm{id}\otimes\mathcal{R}_{[\cdot,\cdot]}(u_{j})\otimes\mathcal{R}_{[\cdot,\cdot]}(x)\big)\Big(v_{j}\otimes\big(r+\tau(r)\big)\Big)\Bigg)=0.\ \ \ \ \ \ \label{eq:com1}
\end{eqnarray}
\item For all $x,y\in A$, \eqref{eq:bialg} holds if and only if
\begin{equation}\label{eq:com2}
\tau h([x, y]-[y, x])\big(r+\tau(r)\big)-\big(\mathrm{id}\otimes\mathcal{R}_{[\cdot,\cdot]}(y)\big)\tau h(x)\big( r+\tau(r)\big)
+\big(\mathrm{id}\otimes\mathcal{R}_{[\cdot,\cdot]}(x)\big)\tau h(y)\big( r+\tau(r)\big)=0.
\end{equation}
\end{enumerate}
\end{pro}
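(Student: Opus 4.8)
The plan is to prove both equivalences by direct computation, substituting the explicit formula $\delta_r(x) = h(x)r$ into the coalgebra axiom \eqref{eq:defi:coalgebra} and the bialgebra compatibility \eqref{eq:bialg}, and then reorganizing the resulting tensor expressions until the term $\mathrm{Al}(r)$ is isolated. First I would set up notation: write $r = \sum_i u_i \otimes v_i$, so that $\tau(r) = \sum_i v_i \otimes u_i$, and record the basic identities $\mathcal{L}_{[\cdot,\cdot]}(x)y = [x,y] = \mathcal{R}_{[\cdot,\cdot]}(x)^{\mathrm{op}}\!$-type relations, i.e. $\mathcal{L}_{[\cdot,\cdot]}(x)y = \mathcal{R}_{[\cdot,\cdot]}(y)x$. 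The key computational device is that $(\delta_r \otimes \mathrm{id})\delta_r(x)$ expands into a sum over pairs $(i,j)$ of rank-one tensors, where one factor carries the operators $(\mathcal{R}_{[\cdot,\cdot]}-\mathcal{L}_{[\cdot,\cdot]})(x)$ and $\mathcal{R}_{[\cdot,\cdot]}(x)$ acting on $u_i, v_i$, and the inner application of $h$ produces further brackets $[u_j, \cdot]$, $[\cdot, u_j]$, and $\mathcal{R}_{[\cdot,\cdot]}(v_j)$ applied to the components of $r$. After applying $(\tau\otimes\mathrm{id}-\mathrm{id}^{\otimes 3})$ and symmetrizing by $\mathrm{id}+\xi+\xi^2$, I would collect terms according to which "slot" the $x$-dependence lands in.

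For part (1): the crucial observation is that the terms in which $\mathcal{R}_{[\cdot,\cdot]}(x)$ acts purely on the components coming from the two inner copies of $r$ (and not on the $x$ brought in by the outer $h$) assemble, after using the symmetric Jacobi identity \eqref{eq:0-Alia} to handle the $[[u_i,u_j],\cdot]$-type rearrangements, into precisely $\big(\mathrm{id}\otimes\mathrm{id}\otimes\mathcal{R}_{[\cdot,\cdot]}(x)\big)(\mathrm{id}\otimes\tau)\mathrm{Al}(r)$ up to the $(\mathrm{id}+\xi+\xi^2)$-symmetrization. The remaining terms are exactly those appearing in \eqref{eq:com1}: the ones of the form $h([u_j,x]-[x,u_j])(r+\tau(r))\otimes v_j$ arise from letting $(\mathcal{R}_{[\cdot,\cdot]}-\mathcal{L}_{[\cdot,\cdot]})(x)$ hit a $u_j$ produced by the inner $h$; the $\tau h(u_j)(r+\tau(r))\otimes v_j$ terms from the action of $\tau$ in the coassociator together with the antisymmetrization $(\mathcal{R}_{[\cdot,\cdot]}-\mathcal{L}_{[\cdot,\cdot]})$; and the last three lines from the mixed terms where $\mathcal{R}_{[\cdot,\cdot]}(v_j)$ or $\mathcal{R}_{[\cdot,\cdot]}(u_j)$ acts in one slot while $\mathcal{R}_{[\cdot,\cdot]}(x)$ acts in another. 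Since $\delta_r$ is defined for every $x$ and these manipulations are reversible, \eqref{eq:defi:coalgebra} holding for all $x$ is equivalent to \eqref{eq:com1} holding for all $x$.

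For part (2): here one substitutes $\delta_r$ into \eqref{eq:bialg}. The terms $(\mathcal{R}_{[\cdot,\cdot]}(x)\otimes\mathrm{id})\delta_r(y)$ and $(\mathcal{R}_{[\cdot,\cdot]}(y)\otimes\mathrm{id})\delta_r(x)$ expand via the definition of $h$; combined with $\delta_r([x,y]-[y,x]) = h([x,y]-[y,x])r$ and the action of $(\tau - \mathrm{id}^{\otimes 2})$, one checks that the $r$-part and the $\tau(r)$-part both contribute, and that after applying $\tau$ the whole expression collapses to \eqref{eq:com2}. The main point is a bookkeeping identity: $(\tau - \mathrm{id}^{\otimes 2})$ applied to a sum of rank-one tensors, followed by the operator $h$ composed appropriately, reorganizes into $\tau h(\cdot)(r+\tau(r))$ because $h(z)r + \tau(h(z)\tau(r))$ naturally produces $r + \tau(r)$ in the untouched slot. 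I expect the main obstacle to be the combinatorial control in part (1): keeping track of the nine-fold ($3\times 3$) expansion of $(\delta_r\otimes\mathrm{id})\delta_r$ under the $(\mathrm{id}+\xi+\xi^2)$-symmetrization without double-counting or sign errors, and correctly matching the $\mathrm{Al}(r)$ pattern $[u_i,u_j]\otimes v_i\otimes v_j + u_i\otimes([u_j,v_i]-[v_i,u_j])\otimes v_j - u_i\otimes u_j\otimes[v_j,v_i]$ against the brackets generated by the two nested applications of $h$. It is essentially a long but mechanical verification; the only conceptual input is the symmetric Jacobi identity \eqref{eq:0-Alia}, used precisely where three distinct elements $u_i, u_j, v_i$ (or $v_j$) get bracketed in two different orders.
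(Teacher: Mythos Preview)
Your proposal is correct and follows essentially the same route as the paper: both parts are proved by brute-force expansion of $\delta_r = h(\cdot)r$ inside \eqref{eq:defi:coalgebra} and \eqref{eq:bialg}, followed by regrouping terms into the $\mathrm{Al}(r)$-piece and the various $r+\tau(r)$-pieces, with the symmetric Jacobi identity \eqref{eq:0-Alia} invoked at the rearrangement steps. The only minor point to flag is that in part~(b) the paper also uses \eqref{eq:0-Alia} (three times) when reorganizing the $[[x,y],u_i]$- and $[[x,y],v_i]$-type terms, so the Jacobi identity is not confined to part~(a) as your sketch might suggest; otherwise your plan matches the paper's computation.
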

\begin{proof}
    {\bf (a)} For all $x\in A$, we have
    \begin{eqnarray*}
&&(\delta_{r}\otimes\mathrm{id})\delta_{r}(x)=(\delta_{r}\otimes\mathrm{id})(\sum_{i} [u_{i}, x]\otimes v_{i}-[x, u_{i}]\otimes v_{i}-u_{i}\otimes [v_{i}, x])\\
&&=\sum_{i,j} [u_{j},[u_{i}, x]]\otimes v_{j}\otimes v_{i}-[[u_{i},x], u_{j}]\otimes v_{j}\otimes v_{i}-u_{j}\otimes [v_{j},[u_{i},x]]\otimes v_{i}\\
&&\ \ -[u_{j},[x, u_{i}]]\otimes v_{j}\otimes v_{i}+[[x,u_{i}],u_{j}]\otimes v_{j}\otimes v_{i}+u_{j}\otimes [v_{j},[x,u_{i}]]\otimes v_{i}\\
&&\ \ -[u_{j}, u_{i}]\otimes v_{j}\otimes [v_{i},x]+[u_{i}, u_{j}]\otimes v_{j}\otimes [v_{i}, x]+u_{j}\otimes [v_{j}, u_{i}]\otimes [v_{i}, x].
    \end{eqnarray*}
Then
    \begin{eqnarray*}
&& (\tau\otimes\mathrm{id}-\mathrm{id}^{\otimes 3})(\delta_{r}\otimes\mathrm{id})\delta_{r}(x)\\
&&=\sum_{i,j} v_{j}\otimes [u_{j}, [u_{i},x]]\otimes v_{i}-v_{j}\otimes [[u_{i}, x], u_{j}]\otimes v_{i}-[v_{j},[u_{i},x]]\otimes u_{j}\otimes v_{i}\\
&&\ \ -v_{j}\otimes [u_{j},[x,u_{i}]]\otimes v_{i}+v_{j}\otimes [[x\, u_{i}], u_{j}]\otimes v_{i}+[v_{j},[x,u_{i}]]\otimes u_{j}\otimes v_{i}\\
&&\ \ -v_{j}\otimes [u_{j},u_{i}]\otimes [v_{i},x]+v_{j}\otimes [u_{i},u_{j}]\otimes [v_{i}, x]+[v_{j},u_{i}]\otimes u_{j}\otimes [v_{i},x]\\
&&\ \ -[u_{j},[u_{i}, x]]\otimes v_{j}\otimes v_{i}+[[u_{i},x], u_{j}]\otimes v_{j}\otimes v_{i}+u_{j}\otimes [v_{j},[u_{i},x]]\otimes v_{i}\\
&&\ \ +[u_{j},[x,u_{i}]]\otimes v_{j}\otimes v_{i}-[[x,u_{i}],u_{j}]\otimes v_{j}\otimes v_{i}-u_{j}\otimes [v_{j},[x, u_{i}]]\otimes v_{i}\\
&&\ \ +[u_{j}, u_{i}]\otimes v_{j}\otimes [v_{i}, x]-[u_{i}, u_{j}]\otimes v_{j}\otimes [v_{i},x]-u_{j}\otimes [v_{j},u_{i}]\otimes [v_{i},x].
    \end{eqnarray*}
We have
\begin{eqnarray*}
&&\sum_{i,j} -v_{j}\otimes [u_{j},u_{i}]\otimes [v_{i},x]+v_{j}\otimes [u_{i},u_{j}]\otimes [v_{i}, x]+[v_{j},u_{i}]\otimes u_{j}\otimes [v_{i},x]\\
&&\ \ +[u_{j},u_{i}]\otimes v_{j}\otimes [v_{i}, x]-[u_{i}, u_{j}]\otimes v_{j}\otimes [v_{i}, x]-u_{j}\otimes [v_{j},u_{i}]\otimes [v_{i}, x]\\
&&=-\sum_{j}\tau h(u_{j})\big( r+\tau(r)\big) \otimes [v_{j}, x]-\sum_{i,j}\big([u_{i}, u_{j}]\otimes v_{j}\otimes [v_{i},x]+u_{i}\otimes [u_{j}, v_{i}]\otimes [v_{j}, x]\big),
\end{eqnarray*}

\begin{eqnarray*}
&&\sum_{i,j} v_{j}\otimes [u_{j}, [u_{i}, x]]\otimes v_{i}-v_{j}\otimes [[u_{i},x], u_{j}]\otimes v_{i}-[v_{j},[u_{i}, x]]\otimes u_{j}\otimes v_{i}\\
&&\ \ -[u_{j},[u_{i}, x]]\otimes v_{j}\otimes v_{i}+[[u_{i},x],u_{j}]\otimes v_{j}\otimes v_{i}+u_{j}\otimes [v_{j},[u_{i},x]]\otimes v_{i}\\
&&=-\sum_{j}h([u_{j}, x])\big(r+\tau(r)\big)\otimes v_{j}-\sum_{i,j}\big( v_{j}\otimes[[u_{i}, x], u_{j}]\otimes v_{i}+[[u_{i},x], v_{j}]\otimes u_{j}\otimes v_{i} \big),
\end{eqnarray*}

\begin{eqnarray*}
&&\sum_{i,j} -v_{j}\otimes [u_{j},[x,u_{i}]]\otimes v_{i}+v_{j}\otimes [[x,u_{i}], u_{j}]\otimes v_{i}+[v_{j},[x,u_{i}]]\otimes u_{j}\otimes v_{i}\\
&&\ \ +[u_{j},[x, u_{i}]]\otimes v_{j}\otimes v_{i}-[[x,u_{i}], u_{j}]\otimes v_{j}\otimes v_{i}-u_{j}\otimes [v_{j},[x, u_{i}]]\otimes v_{i}\\
&&=\sum_{j} h([x,u_{j}])\big( r+\tau(r)\big)\otimes v_{j}+\sum_{i,j} v_{j}\otimes[[x, u_{i}], u_{j}]\otimes v_{i}+[[x, u_{i}], v_{j}]\otimes u_{j}\otimes v_{i},\\
&&\sum_{i,j} v_{j}\otimes[[x, u_{i}], u_{j}]\otimes v_{i}-v_{j}\otimes [[u_{i}, x], u_{j}]\otimes v_{i}\\
&&=-\sum_{i,j} v_{j}\otimes\big(\mathcal{R}(u_{j})\otimes\mathrm{id}\big)h(x)\big(r+\tau(r)\big)+\sum_{i,j}-v_{j}\otimes [u_{i}, u_{j}]\otimes [v_{i}, x]\\
&&+v_{j}\otimes[[v_{i}, x], u_{j}]\otimes u_{i}-v_{j}\otimes[[x,v_{i}],u_{j}]\otimes u_{i}-v_{j}\otimes [v_{i},u_{j}]\otimes [u_{i}, x].
\end{eqnarray*}
By the action of $\xi$ we have
\begin{eqnarray*}
&&(\mathrm{id}+\xi+\xi^{2})\big(v_{j}\otimes [[v_{i}, x], u_{j}]\otimes u_{i}-v_{j}\otimes [[x, v_{i}], u_{j}]\otimes u_{i}\\
&&-[[u_{i}, x], v_{j}]\otimes u_{j}\otimes v_{i}+[[x, u_{i}], v_{j}]\otimes u_{j}\otimes v_{i}\big)\\
&&=(\mathrm{id}+\xi+\xi^{2})\big(v_{j}\otimes [[v_{i}, x], u_{j}]\otimes u_{i}-v_{j}\otimes [[x, v_{i}], u_{j}]\otimes u_{i}\\
&&\ \ -v_{j}\otimes[[u_{j}, x], v_{i}]\otimes u_{i}+v_{j}\otimes[[x, u_{j}], v_{i}]\otimes u_{i} \big)\\
&&\overset{\eqref{eq:0-Alia}}{=}(\mathrm{id}+\xi+\xi^{2})\big( v_{j}\otimes[[v_{i}, u_{j}], x]\otimes u_{i}-v_{j}\otimes [[u_{j}, v_{i}], x]\otimes u_{i}\big).
\end{eqnarray*}
Hence we have
\begin{eqnarray*}
&&(\mathrm{id}+\xi+\xi^{2})\big( \sum_{i,j}-v_{j}\otimes [u_{i}, u_{j}]\otimes [v_{i}, x]-v_{j}\otimes [v_{i}, u_{j}]\otimes [u_{i}, x]+u_{i}\otimes v_{j}\otimes [[v_{i}, u_{j}], x]\\
&&-u_{i}\otimes v_{j}\otimes [[u_{j}, v_{i}], x]-[u_{i}, u_{j}]\otimes v_{j}\otimes [v_{i}, x]-u_{i}\otimes [u_{j}, v_{i}]\otimes [v_{j}, x]\big)\\
&&=(\mathrm{id}+\xi+\xi^{2})\Big( -\big(\mathrm{id}\otimes\mathrm{id}\otimes \mathcal{R}_{[\cdot,\cdot]}(x)\big)(\mathrm{id}\otimes\tau)\mathrm{Al}(r)-\sum_{i,j}( v_{i}\otimes [u_{j}, u_{i}]\otimes [v_{j}, x]\\
&&\ \ +v_{i}\otimes [v_{j}, u_{i}]\otimes [u_{j}, x]+u_{i}\otimes [u_{j}, v_{i}]\otimes [v_{j}, x]+u_{i}\otimes [v_{j}, v_{i}]\otimes [u_{j}, x])\Big)\\
&&=-(\mathrm{id}+\xi+\xi^{2})\Bigg(\big(\mathrm{id}\otimes\mathrm{id}\otimes \mathcal{R}_{[\cdot,\cdot]}(x)\big)(\mathrm{id}\otimes\tau)\mathrm{Al}(r)\\
&&\ \ +\sum_{j} \big(\mathrm{id}\otimes\mathcal{R}_{[\cdot,\cdot]}(v_{j})\otimes\mathcal{R}_{[\cdot,\cdot]}(x)\big)\Big(u_{j}\otimes\big(r+\tau(r)\big)\Big)\\
&&~~+\big(\mathrm{id}\otimes\mathcal{R}_{[\cdot,\cdot]}(u_{j})\otimes\mathcal{R}_{[\cdot,\cdot]}(x)\big)\Big(v_{j}\otimes\big(r+\tau(r)\big)\Big)\bigg).
\end{eqnarray*}
In conclusion, we have
\begin{eqnarray*}
&&(\mathrm{id}+\xi+\xi^{2})(\tau\otimes\mathrm{id}-\mathrm{id}^{\otimes 3})(\delta_{r}\otimes\mathrm{id})\delta_{r}(x)\\
&&=-(\mathrm{id}+\xi+\xi^{2})\bigg( \big(\mathrm{id}\otimes\mathrm{id}\otimes\mathcal{R}_{[\cdot,\cdot]}(x)\big) (\mathrm{id}\otimes\tau)\mathrm{Al}(r)+
\sum_{j} h([u_{j}, x]-[x, u_{j}])\big(r+\tau(r)\big)\otimes v_{j}\notag\\
&&\ \ +\tau h(u_{j})\big( r+\tau(r)\big) \otimes v_{j}
+ v_{j}\otimes\big(\mathcal{R}_{[\cdot,\cdot]}(u_{j})\otimes\mathrm{id}\big)h(x)\big(r+\tau(r)\big)\notag\\
&&\ \ +\big(\mathrm{id}\otimes\mathcal{R}_{[\cdot,\cdot]}(v_{j})\otimes\mathcal{R}_{[\cdot,\cdot]}(x)\big)\Big(u_{j}\otimes\big(r+\tau(r)\big)\Big)+
\big(\mathrm{id}\otimes\mathcal{R}_{[\cdot,\cdot]}(u_{j})\otimes\mathcal{R}_{[\cdot,\cdot]}(x)\big)\Big(v_{j}\otimes\big(r+\tau(r)\big)\Big)\Bigg).
\end{eqnarray*}
Hence the conclusion follows.

{\bf (b)} For all $x,y\in A$, we have
    \begin{eqnarray*}
\delta_{r}([x, y])&=&\sum_{i} [u_{i}, [[x, y], v_{i}]]-[[x, y], u_{i}]\otimes v_{i}-u_{i}\otimes [v_{i},[x, y]],\\
-\delta_{r}([y, x])&=&\sum_{i} -[[u_{i}, [y, x]], v_{i}]+[[y, x], u_{i}]\otimes v_{i}+u_{i}\otimes [v_{i},[y, x]],\\
\big( \mathcal{R}_{[\cdot,\cdot]}(x)\otimes\mathrm{id} \big)\delta_{r}(y)&=&\sum_{i} [[u_{i}, y], x]\otimes v_{i}-[[y, u_{i}], x]\otimes v_{i}-[u_{i}, x]\otimes [v_{i}, y],\\
-\big( \mathcal{R}_{[\cdot,\cdot]}(y)\otimes\mathrm{id} \big)\delta_{r}(x)&=&\sum_{i} -[[u_{i}, x], y]\otimes v_{i}+[[x, u_{i}], y]\otimes v_{i}+[u_{i}, y]\otimes [v_{i}, x].
    \end{eqnarray*}
Hence
\begin{eqnarray*}
&&    (\tau-\mathrm{id}^{2})\big( \delta_{r}([x, y]-[y, x])+(\mathcal{R}_{[\cdot,\cdot]}(x)\otimes\mathrm{id})\delta_{r}(y)-(\mathcal{R}_{[\cdot,\cdot]}(y)\otimes\mathrm{id})\delta_{r}(x) \big)\\
&&\overset{\eqref{eq:0-Alia}}{=}\sum_{i} v_{i}\otimes [u_{i},[x, y]]-[v_{i},[x, y]]\otimes u_{i}-v_{i}\otimes [u_{i},[y, x]]+[v_{i},[y, x]]\otimes u_{i}\\
&&\ \ -[v_{i}, y]\otimes [u_{i}, x]+[v_{i}, x]\otimes [u_{i}, y]-[u_{i},[x, y]]\otimes v_{i}+u_{i}\otimes [v_{i},[x, y]]\\
&&\ \ +[u_{i},[y, x]]\otimes v_{i}-u_{i}\otimes [v_{i},[y, x]]+[u_{i}, x]\otimes [v_{i}, y]-[u_{i}, y]\otimes [v_{i}, x].
\end{eqnarray*}
Noticing that
\begin{eqnarray*}
&&\sum_{i} v_{i}\otimes [u_{i},[x, y]]-[v_{i},[x, y]]\otimes u_{i}-[u_{i},[x, y]]\otimes v_{i}+u_{i}\otimes [v_{i},[x, y]]\\
&&=\tau h([x, y])\big( r+\tau(r)\big)+\sum_{i} v_{i}\otimes [[x, y], u_{i}]+u_{i}\otimes[[x, y], v_{i}],\\
&&\sum_{i} -v_{i}\otimes [u_{i},[y, x]]+[v_{i},[y, x]]\otimes u_{i}+[u_{i},[y, x]]\otimes v_{i}-u_{i}\otimes [v_{i},[y, x]]\\
&&=-\Big( \tau h([y, x])\big( r+\tau(r)\big)+\sum_{i} v_{i}\otimes [[y, x], u_{i}]+u_{i}\otimes[[y, x], v_{i}]\Big),\\
&&\sum_{i} v_{i}\otimes[[x, y], u_{i}]-v_{i}\otimes[[y, x], u_{i}]\\
&&\overset{\eqref{eq:0-Alia}}{=}\sum_{i} v_{i}\otimes [[x, u_{i}], y]-v_{i}\otimes [[u_{i}, x], y]+v_{i}\otimes[[u_{i}, y], x]-v_{i}\otimes[[y, u_{i}], x],\\
&& \sum_{i} u_{i}\otimes[[x, y], v_{i}]-u_{i}\otimes[[y, x], v_{i}]\\
&&\overset{\eqref{eq:0-Alia}}{=}\sum_{i} u_{i}\otimes [[x, v_{i}], y]-u_{i}\otimes [[v_{i}, x], y]+u_{i}\otimes[[v_{i}, y], x]-u_{i}\otimes[[y, v_{i}], x], \\
&&\sum_{i} [v_{i}, x]\otimes [u_{i}, y]+[u_{i}, x]\otimes [v_{i}, y]+v_{i}\otimes [[x, u_{i}], y]\\
&&-v_{i}\otimes[[u_{i}, x], y]+u_{i}\otimes[[x, v_{i}], y]-u_{i}\otimes [[v_{i}, x], y]\\
&&=-\big(\mathrm{id}\otimes\mathcal{R}_{[\cdot,\cdot]}(y)\big)\tau h(x)\big( r+\tau(r)\big),\\
&&\sum_{i} -[v_{i}, y]\otimes [u_{i}, x]-[u_{i}, y]\otimes [v_{i}, x]-v_{i}\otimes [[y, u_{i}], x]\\
&&+v_{i}\otimes[[u_{i}, y], x]-u_{i}\otimes[[y, v_{i}], x]+u_{i}\otimes [[v_{i}, y], x]\\
&&= \big(\mathrm{id}\otimes\mathcal{R}_{[\cdot,\cdot]}(x)\big)\tau h(y)\big( r+\tau(r)\big),
\end{eqnarray*}
we finally have
\begin{eqnarray*}
    &&    (\tau-\mathrm{id}^{2})\big( \delta_{r}([x, y]-[y, x])+(\mathcal{R}_{[\cdot,\cdot]}(x)\otimes\mathrm{id})\delta_{r}(y)-(\mathcal{R}_{[\cdot,\cdot]}(y)\otimes\mathrm{id})\delta_{r}(x) \big)\\
    &&=\tau h([x, y]-[y, x])\big(r+\tau(r)\big)-\big(\mathrm{id}\otimes\mathcal{R}_{[\cdot,\cdot]}(y)\big)\tau h(x)\big( r+\tau(r)\big)\\
    &&+\big(\mathrm{id}\otimes\mathcal{R}_{[\cdot,\cdot]}(x)\big)\tau h(y)\big( r+\tau(r)\big).
\end{eqnarray*}
Hence the conclusion follows.
\end{proof}

By Proposition \ref{pro:4.2}, we have

\begin{cor}\label{cor:4.3}
    Let $(A,[\cdot,\cdot])$ be a left-Alia algebra and $r\in A\otimes A$.
    Suppose that $r$ is an antisymmetric solution of the left-Alia Yang-Baxter equation, that is,
    \begin{equation}
   r+\tau(r)=0,\; \mathrm{Al}(r)=0.
    \end{equation}
    Then $(A,[\cdot,\cdot],\delta_{r})$ is a left-Alia bialgebra, where $\delta_{r}:A\rightarrow A\otimes A$ is given by \eqref{eq:co}.
\end{cor}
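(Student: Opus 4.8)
The plan is to deduce Corollary \ref{cor:4.3} directly from Proposition \ref{pro:4.2}, using the antisymmetry hypothesis $r+\tau(r)=0$ to kill almost every term in the characterizing identities \eqref{eq:com1} and \eqref{eq:com2}. First I would handle the coalgebra axiom: by part (a) of Proposition \ref{pro:4.2}, the co-multiplication $\delta_{r}$ defined by \eqref{eq:co} satisfies \eqref{eq:defi:coalgebra} for all $x\in A$ if and only if \eqref{eq:com1} holds. Inspecting the six groups of terms under the outer operator $\mathrm{id}+\xi+\xi^{2}$ in \eqref{eq:com1}, all of them except the first are visibly of the shape ``a linear operator applied to $r+\tau(r)$'' (the terms built from $h([u_{j},x]-[x,u_{j}])$, from $\tau h(u_{j})$, from $(\mathcal{R}_{[\cdot,\cdot]}(u_{j})\otimes\mathrm{id})h(x)$, and the two terms $(\mathrm{id}\otimes\mathcal{R}_{[\cdot,\cdot]}(v_{j})\otimes\mathcal{R}_{[\cdot,\cdot]}(x))(u_{j}\otimes(r+\tau(r)))$ and $(\mathrm{id}\otimes\mathcal{R}_{[\cdot,\cdot]}(u_{j})\otimes\mathcal{R}_{[\cdot,\cdot]}(x))(v_{j}\otimes(r+\tau(r)))$); hence they all vanish when $r+\tau(r)=0$. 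The only surviving term is $(\mathrm{id}+\xi+\xi^{2})(\mathrm{id}\otimes\mathrm{id}\otimes\mathcal{R}_{[\cdot,\cdot]}(x))(\mathrm{id}\otimes\tau)\mathrm{Al}(r)$, which is zero because $\mathrm{Al}(r)=0$ by hypothesis. Thus \eqref{eq:com1} holds for every $x\in A$, so $(A,\delta_{r})$ is a left-Alia coalgebra.

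Next I would handle the bialgebra compatibility: by part (b) of Proposition \ref{pro:4.2}, \eqref{eq:bialg} holds for all $x,y\in A$ if and only if \eqref{eq:com2} holds, and each of the three summands in \eqref{eq:com2} is again a linear operator applied to $r+\tau(r)$, so \eqref{eq:com2} is satisfied automatically under the antisymmetry assumption. Combining the two steps, $(A,[\cdot,\cdot])$ is a left-Alia algebra, $(A,\delta_{r})$ is a left-Alia coalgebra, and \eqref{eq:bialg} holds; therefore, by Definition \ref{bialgebra}, $(A,[\cdot,\cdot],\delta_{r})$ is a left-Alia bialgebra.

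I do not expect a genuine obstacle here: the algebraic content has already been carried out in Proposition \ref{pro:4.2}, and what remains is essentially bookkeeping. The only point needing a moment's care is to check, termwise against the displayed formulas, that every contribution to \eqref{eq:com1} and \eqref{eq:com2} other than the single $\mathrm{Al}(r)$-term genuinely carries a full factor $r+\tau(r)$ — not a bare $r$, nor an isolated $u_{j}$ or $v_{j}$ attached to something that is not proportional to $r+\tau(r)$ — after which the vanishing is immediate and the proof is complete.
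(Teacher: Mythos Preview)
Your proposal is correct and follows exactly the paper's approach: the paper simply writes ``By Proposition \ref{pro:4.2}, we have'' before stating the corollary, and your argument spells out precisely why---every term in \eqref{eq:com1} and \eqref{eq:com2} other than the $\mathrm{Al}(r)$-term carries a factor $r+\tau(r)$ and hence vanishes under antisymmetry.
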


\begin{defi}
    Let $(A,[\cdot,\cdot])$ be a left-Alia algebra.
    If $r\in A\otimes A$ is an antisymmetric  solution of the left-Alia Yang-Baxter equation,
    then we say the resulting left-Alia bialgebra $(A,[\cdot,\cdot],\delta_{r})$ from Corollary \ref{cor:4.3}  is {\bf  triangular}.
\end{defi}

\section{Relative Rota-Baxter operators of left-Alia algebras}
In this section, we study relative Rota-Baxter operators of left-Alia algebras, which serve as operator forms of the left-Alia Yang-Baxter equation.
\begin{defi}
Let $(A,[\cdot,\cdot])$ be a left-Alia algebra with a representation $(l,r,V)$.
A linear map $T:V\rightarrow A$ is called a {\bf relative Rota-Baxter operator} of $(A,[\cdot,\cdot])$ associated to $(l,r,V)$ if the following equation holds:
\begin{equation}\label{eq:oop}
    [T(u),T(v)]=T\big(l(Tu)v+r(Tv)u\big),\;\forall u,v\in V.
\end{equation}
\end{defi}

For vector spaces $V$ and $ A$, we  can identify an element $r\in V\otimes A$ as a linear map $r^{\sharp}:V^{*}\rightarrow A$ by
\begin{equation}
    \langle r^{\sharp}(u^{*}), a^{*}\rangle=\langle r, u^{*}\otimes a^{*}\rangle, \;\forall u^{*}\in V^{*}, a^{*}\in A^{*}.
\end{equation}
Now we study the operator form of $r$ in a triangular left-Alia bialgebra $(A,[\cdot,\cdot],\delta_{r})$.

\begin{pro} \label{pro:4.4}
Let $(A,[\cdot,\cdot]_A )$ be a left-Alia algebra and $r=\sum\limits_{i} u_{i}\otimes v_{i}\in A\otimes A$ be antisymmetric.
Let $\delta_{r}:A\rightarrow A\otimes A$ be a co-multiplication given by \eqref{eq:co} and $[\cdot,\cdot]_{A^*} :A^{*}\otimes A^{*}\rightarrow A^{*}$ be the linear dual of $\delta_{r}$.
Then we have
\begin{equation}\label{eq:com}
    [a^{*},  b^{*}]_{A^*}=\mathcal{L}^{*}_{[\cdot,\cdot]_A }\big(r^{\sharp}(a^{*})\big)b^{*}
    +(\mathcal{L}^{*}_{[\cdot,\cdot]_A }-\mathcal{R}^{*}_{[\cdot,\cdot]_A })\big( r^{\sharp}(b^{*}) \big)a^{*},\;\forall a^{*},b^{*}\in A^{*}.
\end{equation}
Moreover, we have
{\small
\begin{equation}\label{eq:homo1}
  \langle r^{\sharp}([a^{*}, b^{*}]_{A^*})-[r^{\sharp}(a^{*}),  r^{\sharp}(b^{*})]_{A}, c^{*}\rangle=\langle a^{*}\otimes b^{*}\otimes c^{*}, (\tau\otimes\mathrm{id})\mathrm{Al}(r) \rangle.
\end{equation}
}
\end{pro}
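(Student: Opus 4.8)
The plan is to prove \eqref{eq:com} first by a direct dualization of the co-multiplication $\delta_r$, and then to deduce \eqref{eq:homo1} by substituting \eqref{eq:com} into $r^\sharp([a^*,b^*]_{A^*})$ and comparing the result, entry by entry, with the explicit expansion of $(\tau\otimes\mathrm{id})\mathrm{Al}(r)$.

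For \eqref{eq:com}, the first step is to write $\delta_r$ out in coordinates. Substituting $r=\sum_i u_i\otimes v_i$ into \eqref{eq:co} gives
\[
\delta_r(x)=\sum_i\big([u_i,x]-[x,u_i]\big)\otimes v_i-u_i\otimes[v_i,x].
\]
Pairing with $a^*\otimes b^*$ via \eqref{db} then expresses $\langle[a^*,b^*]_{A^*},x\rangle$ as a sum of three elementary terms. Next I would unfold the right-hand side of \eqref{eq:com} using the definitions of $\mathcal{L}^*_{[\cdot,\cdot]_A}$, $\mathcal{R}^*_{[\cdot,\cdot]_A}$ and $r^\sharp$, and recognize each of the three terms. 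The key point is to exploit the antisymmetry $r+\tau(r)=0$ in its two equivalent forms
\[
r^\sharp(a^*)=\sum_i\langle u_i,a^*\rangle v_i=-\sum_i\langle v_i,a^*\rangle u_i:
\]
the first representative matches the term coming from $u_i\otimes[v_i,x]$ with $\mathcal{L}^*_{[\cdot,\cdot]_A}(r^\sharp(a^*))b^*$, while the second representative matches the two terms coming from $\big([u_i,x]-[x,u_i]\big)\otimes v_i$ with $(\mathcal{L}^*_{[\cdot,\cdot]_A}-\mathcal{R}^*_{[\cdot,\cdot]_A})(r^\sharp(b^*))a^*$. (Notably the symmetric Jacobi identity \eqref{eq:0-Alia} is not needed here; only antisymmetry of $r$ and bilinearity enter.)

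For \eqref{eq:homo1}, set $x=r^\sharp(a^*)$ and $y=r^\sharp(b^*)$. Substituting \eqref{eq:com}, pairing $r^\sharp([a^*,b^*]_{A^*})$ against $c^*$ through the identity $\langle r^\sharp(\xi),c^*\rangle=\sum_k\langle\xi,u_k\rangle\langle v_k,c^*\rangle$, and using the definitions of the dual maps, one obtains
\[
\langle r^\sharp([a^*,b^*]_{A^*}),c^*\rangle=\sum_k\Big(-\langle b^*,[x,u_k]\rangle+\langle a^*,[u_k,y]-[y,u_k]\rangle\Big)\langle v_k,c^*\rangle.
\]
On the other side, expanding $(\tau\otimes\mathrm{id})\mathrm{Al}(r)$ and pairing with $a^*\otimes b^*\otimes c^*$ produces three groups of scalar terms, one from each summand in the definition of $\mathrm{Al}(r)$. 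I would then match them: the group from $[u_i,u_j]\otimes v_i\otimes v_j$ against the $\langle b^*,[x,u_k]\rangle$ contribution (after rewriting $x=-\sum_i\langle v_i,a^*\rangle u_i$ and relabeling indices), the group from $u_i\otimes\big([u_j,v_i]-[v_i,u_j]\big)\otimes v_j$ against the $\langle a^*,[u_k,y]-[y,u_k]\rangle$ contribution (using $y=\sum_j\langle u_j,b^*\rangle v_j$), and the group from $-u_i\otimes u_j\otimes[v_j,v_i]$ against $-\langle[r^\sharp(a^*),r^\sharp(b^*)]_A,c^*\rangle$ (again using antisymmetry of $r$). Adding the three matched identities yields \eqref{eq:homo1}.

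The conceptual content is light; the main obstacle is the index bookkeeping. Each of the three pieces requires plugging in the correct antisymmetric representative of $r^\sharp$ and relabeling the summation indices consistently so that every monomial finds its partner, and one must keep careful track of signs arising from the dual maps and from $r=-\tau(r)$. Once \eqref{eq:com} is established, deriving \eqref{eq:homo1} is essentially a disciplined execution of this matching.
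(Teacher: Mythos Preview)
Your proposal is correct and follows essentially the same line as the paper: both arguments establish \eqref{eq:com} by direct dualization of $\delta_r$ and then verify \eqref{eq:homo1} by an explicit coordinate computation matched against the three summands of $\mathrm{Al}(r)$. The only organizational difference is that for \eqref{eq:homo1} the paper re-expands $\langle u_i,[a^*,b^*]_{A^*}\rangle$ via $\delta_r(u_i)$ (picking up an extra $\sum_j h(u_j)\big(r+\tau(r)\big)\otimes v_j$ term that is killed by antisymmetry at the end), whereas you feed the already-proven formula \eqref{eq:com} into $r^\sharp$ and thereby have antisymmetry built in from the start; this is a cosmetic rather than a substantive distinction.
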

\begin{proof}
Let $a^{*}, b^{*},c^{*}\in A^{*}, x\in A$.
Then we have
\begin{eqnarray*}
&&\langle [a^{*},  b^{*}]_{A^*},x\rangle=\langle a^{*}\otimes b^{*}, \delta_{r}(x)\rangle\\
&&=\langle a^{*}\otimes b^{*}, \big( (\mathcal{R}_{[\cdot,\cdot]_A }-\mathcal{L}_{[\cdot,\cdot]_A })(x)\otimes\mathrm{id}-\mathrm{id}\otimes\mathcal{R}_{[\cdot,\cdot]_A }(x)\big) r\rangle\\
&&=\langle r,  (\mathcal{L}^{*}_{[\cdot,\cdot]_A }-\mathcal{R}^{*}_{[\cdot,\cdot]_A})(x) a^{*}\otimes b^{*}+a^{*}\otimes\mathcal{R}^{*}_{[\cdot,\cdot]_A }(x)    b^{*}\rangle\\
&&=\langle r^{\sharp}(a^{*}),\mathcal{R}^{*}_{[\cdot,\cdot]_A }(x)b^{*}\rangle-\langle r^{\sharp}(b^{*}), (\mathcal{L}^{*}_{[\cdot,\cdot]_A }-\mathcal{R}^{*}_{[\cdot,\cdot]_A })(x)a^{*}\rangle\\
&&=-\langle [r^{\sharp}(a^{*}), x]_{A}, b^{*}\rangle+\langle [x, r^{\sharp}(b^{*})]_{A}-[r^{\sharp}(b^{*}), x]_A,a^{*}\rangle\\
&&=\langle x, \mathcal{L}^{*}_{[\cdot,\cdot]_A }\big(r^{\sharp}(a^{*})\big)b^{*}
    +(\mathcal{L}^{*}_{[\cdot,\cdot]_A }-\mathcal{R}^{*}_{[\cdot,\cdot]_A })\big( r^{\sharp}(b^{*}) \big)a^{*}\rangle.
\end{eqnarray*}
Hence \eqref{eq:com} holds. Moreover, we have
\begin{eqnarray*}
&&\langle r^{\sharp}([a^{*}, b^{*}]_{A^*}),c^{*}\rangle=\sum_{i}\langle u_{i}, [a^{*}, b^{*}]_{A^*}\rangle\langle v_{i}, c^{*}\rangle=\sum_{i}\langle\delta(u_{i}),a^{*}\otimes b^{*}\rangle\langle v_{i}, c^{*}\rangle\\
&&=\sum_{i,j}\langle  [u_{j}, u_{i}]_{A} \otimes v_{j}-[u_{i}, u_{j}]_A\otimes v_{j}-u_{j}\otimes [v_{j}, u_{i}]_A, a^{*}\otimes b^{*}\rangle\langle v_{i},c^{*}\rangle\\
&&=\sum_{i,j}\langle a^{*}\otimes b^{*}\otimes c^{*}, [u_{j}, u_{i}]_A\otimes v_{j}\otimes v_{i}-[u_{i},  u_{j}]_A\otimes v_{j}\otimes v_{i}-u_{j}\otimes [v_{j}, u_{i}]_A\otimes v_{i}\rangle.
\end{eqnarray*}
Similarly we have
\begin{eqnarray*}
    \langle r^{\sharp}([a^{*}), r^{\sharp}(b^{*})]_A, c^{*}\rangle=\sum_{i,j}\langle a^{*}\otimes b^{*}\otimes c^{*}, u_{i}\otimes u_{j}\otimes [v_{i}, v_{j}]_A\rangle.
\end{eqnarray*}
Thus
{\small
\begin{eqnarray*}
&&\langle r^{\sharp}([a^{*}, b^{*}]_{A^*})- [r^{\sharp}(a^{*}), r^{\sharp}(b^{*})]_A, c^{*}\rangle\\
&&=\sum_{i,j}\langle a^{*}\otimes b^{*}\otimes c^{*}, ([u_{j}, u_{i}]_A-[u_{i},  u_{j}]_A)\otimes v_{j}\otimes v_{i} -u_{j}\otimes [v_{j}, u_{i}]_A\otimes v_{i}-u_{i}\otimes u_{j}\otimes [v_{i}, v_{j}]_A\rangle\\
&&=\langle a^{*}\otimes b^{*}\otimes c^{*}, (\tau\otimes\mathrm{id})\mathrm{Al}(r)+\sum_{j}h(u_{j})\big( r+\tau(r)\big)\otimes v_{j}\rangle\\
&&=\langle a^{*}\otimes b^{*}\otimes c^{*}, (\tau\otimes\mathrm{id})\mathrm{Al}(r) \rangle.
\end{eqnarray*}}
that is, \eqref{eq:homo1} holds.
\end{proof}

\begin{cor}
    Let $(A,[\cdot,\cdot] )$ be a left-Alia algebra and $r=\sum\limits_{i} u_{i}\otimes v_{i}\in A\otimes A$ be antisymmetric.
Let $\delta_{r}:A\rightarrow A\otimes A$ be a co-multiplication given by \eqref{eq:co}.
Then the following statements are equivalent:
\begin{enumerate}
    \item $r$ is a solution of the left-Alia Yang-Baxter equation in $(A,[\cdot,\cdot])$ such that $(A,[\cdot,\cdot],\delta_{r})$ is a triangular left-Alia bialgebra.
    \item $r^{\sharp}:A^{*}\rightarrow A$ is a relative Rota-Baxter operator of $(A,[\cdot,\cdot])$ associated to $(\mathcal{L}^{*}_{[\cdot,\cdot]},\mathcal{L}^{*}_{[\cdot,\cdot]}-\mathcal{R}^{*}_{[\cdot,\cdot]},A^{*})$, that is,
\begin{equation}
[r^{\sharp}(a^{*}),  r^{\sharp}(b^{*})]=r^{\sharp} \Big(\mathcal{L}^{*}_{[\cdot,\cdot] }\big(r^{\sharp}(a^{*})\big)b^{*}
    +(\mathcal{L}^{*}_{[\cdot,\cdot] }-\mathcal{R}^{*}_{[\cdot,\cdot] })\big( r^{\sharp}(b^{*}) \big)a^{*}\Big),\;\forall a^{*},b^{*}\in A^{*}.
\end{equation}
\end{enumerate}
In this case,  $r^{\sharp}:(A^{*},[\cdot,\cdot]_{A^*})\rightarrow (A,[\cdot,\cdot])$ is a homomorphism
 of left-Alia algebras, where $[\cdot,\cdot]_{A^*} :A^{*}\otimes A^{*}\rightarrow A^{*}$ is the linear dual of $\delta_{r}$.
\end{cor}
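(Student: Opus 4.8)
The plan is to deduce both implications almost directly from Proposition \ref{pro:4.4}, using only the standing hypothesis that $r$ is antisymmetric, the non-degeneracy of the canonical pairing between $A$ and $A^{*}$, and Corollary \ref{cor:4.3}. Since $r + \tau(r) = 0$, both formulas \eqref{eq:com} and \eqref{eq:homo1} are available. Formula \eqref{eq:com} expresses the linear dual $[\cdot,\cdot]_{A^*}$ of $\delta_{r}$ (a priori merely a bilinear map on $A^{*}$) as
\[
[a^{*}, b^{*}]_{A^*} = \mathcal{L}^{*}_{[\cdot,\cdot]}\big(r^{\sharp}(a^{*})\big)b^{*} + \big(\mathcal{L}^{*}_{[\cdot,\cdot]} - \mathcal{R}^{*}_{[\cdot,\cdot]}\big)\big(r^{\sharp}(b^{*})\big)a^{*},
\]
so substituting this into the right-hand side of the coadjoint relative Rota-Baxter identity shows that statement (2) is equivalent to $[r^{\sharp}(a^{*}), r^{\sharp}(b^{*})] = r^{\sharp}\big([a^{*}, b^{*}]_{A^*}\big)$ for all $a^{*}, b^{*} \in A^{*}$.

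Next I would invoke \eqref{eq:homo1}: since the pairing is non-degenerate and $\tau \otimes \mathrm{id}$ is invertible on $A^{\otimes 3}$, the identity $r^{\sharp}\big([a^{*}, b^{*}]_{A^*}\big) = [r^{\sharp}(a^{*}), r^{\sharp}(b^{*})]$ for all $a^{*}, b^{*}$ is equivalent to $(\tau \otimes \mathrm{id})\mathrm{Al}(r) = 0$, and hence to $\mathrm{Al}(r) = 0$. Together with the previous step, this shows that (2) holds if and only if $\mathrm{Al}(r) = 0$. On the other hand, (1) contains the assertion $\mathrm{Al}(r) = 0$ by definition, while conversely, once $\mathrm{Al}(r) = 0$ and $r + \tau(r) = 0$, Corollary \ref{cor:4.3} yields that $(A,[\cdot,\cdot],\delta_{r})$ is a triangular left-Alia bialgebra; hence (1) is also equivalent to $\mathrm{Al}(r) = 0$. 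Chaining the two equivalences gives (1) $\Leftrightarrow$ (2). For the last assertion, in either case $\delta_{r}$ is a left-Alia coalgebra, so $(A^{*}, [\cdot,\cdot]_{A^*})$ is a left-Alia algebra by Proposition \ref{pro:coalgebras}, and the identity $r^{\sharp}\big([a^{*}, b^{*}]_{A^*}\big) = [r^{\sharp}(a^{*}), r^{\sharp}(b^{*})]$ established above is precisely the statement that $r^{\sharp}$ is a homomorphism of left-Alia algebras.

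I do not expect a genuine obstacle here: Proposition \ref{pro:4.4} absorbs all the computational weight, and what is left is bookkeeping. The one point needing a little care is the logical packaging of statement (1): under the standing antisymmetry hypothesis, the compound clause ``$r$ solves the left-Alia Yang-Baxter equation \emph{and} $(A,[\cdot,\cdot],\delta_{r})$ is triangular'' collapses to the single condition $\mathrm{Al}(r) = 0$, the triangularity being automatic by Corollary \ref{cor:4.3}. One should also keep in mind that \eqref{eq:com} is used only to rewrite coadjoint-action terms as the dual bracket $[\cdot,\cdot]_{A^*}$, which is legitimate for any antisymmetric $r$ and does not presuppose that $A^{*}$ already carries a left-Alia structure.
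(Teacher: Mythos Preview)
Your proposal is correct and follows exactly the approach of the paper, which simply writes ``It follows from Proposition \ref{pro:4.4}.'' You have merely unpacked this one-line proof by making explicit how \eqref{eq:com} rewrites the coadjoint terms as $[a^{*},b^{*}]_{A^{*}}$, how \eqref{eq:homo1} together with non-degeneracy reduces the relative Rota-Baxter condition to $\mathrm{Al}(r)=0$, and how Corollary~\ref{cor:4.3} then identifies this with statement~(1); nothing is missing or different in substance.
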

\begin{proof}
    It follows from Proposition \ref{pro:4.4}.
\end{proof}

 Now we consider the case when $r^{\sharp}$ is moreover an isomorphism.

\delete{
Suppose $r\in A\otimes A$ is antisymmetric and $r^{\sharp}:A^{*}\rightarrow A$ is a linear isomorphism.
Let $\omega$ be the nondegenerate antisymmetric bilinear form on $A$ given by
\begin{equation}
    \omega(x,y)=\langle (r^{\sharp})^{-1}x,y\rangle,\;\forall x,y\in A.
\end{equation}
Then $r$ is a solution of the left-Alia Yang-Baxter equation in $(A,[\cdot,\cdot])$ such that $(A,\circ,\delta_{r})$ is a triangular left-Alia bialgebra if and only if}

\begin{pro}\label{pro:4.8}
Let $(A,[\cdot,\cdot])$ be a left-Alia algebra.
Then there is an antisymmetric $r\in A\otimes A$ such that $r^{\sharp}:A^{*}\rightarrow A$ is an   invertible relative Rota-Baxter operator associated to $(\mathcal{L}^{*}_{[\cdot,\cdot]},\mathcal{L}^{*}_{[\cdot,\cdot]}-\mathcal{R}^{*}_{[\cdot,\cdot]},A^{*})$ if and only if there is a nondegenerate antisymmetric bilinear form $\omega$ on $A$ such that
the following equation holds:
\begin{equation}\label{eq:2-coc}
\omega(x, [y, z]-[z, y])=\omega([x, z],y)-\omega([x, y],z),\;\forall x,y,z\in A.
\end{equation}
\end{pro}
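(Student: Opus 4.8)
The plan is to realize both the tensor $r$ and the form $\omega$ as two encodings of one linear isomorphism $r^{\sharp}\colon A^{*}\to A$, and then turn the relative Rota-Baxter identity into \eqref{eq:2-coc} by a direct pairing computation. Given an invertible $r^{\sharp}$ with associated antisymmetric $r\in A\otimes A$, set $\omega(x,y):=\langle (r^{\sharp})^{-1}(x),y\rangle$; conversely, given a nondegenerate $\omega$, let $\omega^{\flat}\colon A\to A^{*}$ be defined by $\langle\omega^{\flat}(x),y\rangle=\omega(x,y)$, put $r^{\sharp}:=(\omega^{\flat})^{-1}$, and let $r\in A\otimes A$ be the unique tensor with this associated map (using finite-dimensionality of $A$). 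In either case nondegeneracy of $\omega$ is equivalent to invertibility of $r^{\sharp}$; and writing $x=r^{\sharp}(a^{*})$, $y=r^{\sharp}(b^{*})$ and using $\langle r^{\sharp}(a^{*}),b^{*}\rangle=\langle r,a^{*}\otimes b^{*}\rangle$, one sees that $r+\tau(r)=0$ is equivalent to $\langle r^{\sharp}(a^{*}),b^{*}\rangle=-\langle r^{\sharp}(b^{*}),a^{*}\rangle$, hence to $\omega(x,y)=-\omega(y,x)$. So the data correspond bijectively, and it remains to match the defining equations.

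For the ``only if'' direction, unwinding the definition of a relative Rota-Baxter operator for the coadjoint representation $(\mathcal{L}^{*}_{[\cdot,\cdot]},\mathcal{L}^{*}_{[\cdot,\cdot]}-\mathcal{R}^{*}_{[\cdot,\cdot]},A^{*})$ --- which is a representation by Proposition \ref{pro:dual rep} --- the hypothesis on $r^{\sharp}$ reads
\[ [r^{\sharp}(a^{*}),r^{\sharp}(b^{*})]=r^{\sharp}\Big(\mathcal{L}^{*}_{[\cdot,\cdot]}\big(r^{\sharp}(a^{*})\big)b^{*}+(\mathcal{L}^{*}_{[\cdot,\cdot]}-\mathcal{R}^{*}_{[\cdot,\cdot]})\big(r^{\sharp}(b^{*})\big)a^{*}\Big),\qquad\forall a^{*},b^{*}\in A^{*}. \]
Since $r^{\sharp}$ is onto, every pair $x,y\in A$ is of the form $x=r^{\sharp}(a^{*})$, $y=r^{\sharp}(b^{*})$. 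Applying $(r^{\sharp})^{-1}$ to this identity and pairing with an arbitrary $z\in A$, I will use $\langle(r^{\sharp})^{-1}(w),z\rangle=\omega(w,z)$, the relations $\langle\mathcal{L}^{*}_{[\cdot,\cdot]}(w)\xi,z\rangle=-\langle\xi,[w,z]\rangle$ and $\langle\mathcal{R}^{*}_{[\cdot,\cdot]}(w)\xi,z\rangle=-\langle\xi,[z,w]\rangle$, together with $\langle a^{*},-\rangle=\omega(x,-)$ and $\langle b^{*},-\rangle=\omega(y,-)$, to rewrite it as
\[ \omega([x,y],z)=-\omega(y,[x,z])-\omega\big(x,[y,z]-[z,y]\big). \]
Replacing $-\omega(y,[x,z])$ by $\omega([x,z],y)$ via the antisymmetry of $\omega$ and rearranging yields exactly \eqref{eq:2-coc}.

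For the ``if'' direction, start from a nondegenerate antisymmetric $\omega$ satisfying \eqref{eq:2-coc} and build $r^{\sharp}$ and $r$ as above. For $x,y\in A$ set $a^{*}:=\omega^{\flat}(x)$ and $b^{*}:=\omega^{\flat}(y)$, so that $\langle a^{*},-\rangle=\omega(x,-)$, $\langle b^{*},-\rangle=\omega(y,-)$ and $(r^{\sharp})^{-1}=\omega^{\flat}$. Reading the computation of the previous paragraph backwards, \eqref{eq:2-coc} (again rewritten through the antisymmetry of $\omega$) says precisely that
\[ \big\langle (r^{\sharp})^{-1}[x,y]-\mathcal{L}^{*}_{[\cdot,\cdot]}(x)b^{*}-(\mathcal{L}^{*}_{[\cdot,\cdot]}-\mathcal{R}^{*}_{[\cdot,\cdot]})(y)a^{*},\,z\big\rangle=0 \]
for every $z\in A$; since $z$ is arbitrary and $(r^{\sharp})^{-1}$ is invertible, applying $r^{\sharp}$ recovers the relative Rota-Baxter identity for $r^{\sharp}$ with respect to the coadjoint representation. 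Together with the invertibility of $r^{\sharp}$ and the antisymmetry of $r$ established at the outset, this proves the equivalence.

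I expect no real obstacle: the whole argument is bookkeeping. The only delicate point is sign management --- the signs coming from the convention $\langle l^{*}(x)u^{*},v\rangle=-\langle u^{*},l(x)v\rangle$, from the second component $\mathcal{L}^{*}_{[\cdot,\cdot]}-\mathcal{R}^{*}_{[\cdot,\cdot]}$ of the coadjoint representation, and especially from invoking the antisymmetry of $\omega$ at exactly the right step (to convert $\omega(y,[x,z])$ into $-\omega([x,z],y)$), which is what produces the lopsided shape of \eqref{eq:2-coc}. Note that the symmetric Jacobi identity \eqref{eq:0-Alia} itself is not needed in this translation; the left-Alia structure enters only through Proposition \ref{pro:dual rep}, which guarantees that $(\mathcal{L}^{*}_{[\cdot,\cdot]},\mathcal{L}^{*}_{[\cdot,\cdot]}-\mathcal{R}^{*}_{[\cdot,\cdot]},A^{*})$ is genuinely a representation.
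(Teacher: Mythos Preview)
Your proposal is correct and follows essentially the same approach as the paper: both set up the dictionary $\omega(x,y)=\langle (r^{\sharp})^{-1}x,y\rangle$, write $x=r^{\sharp}(a^{*})$, $y=r^{\sharp}(b^{*})$, and translate the relative Rota-Baxter identity for the coadjoint representation into \eqref{eq:2-coc} by a direct pairing computation together with the antisymmetry of $\omega$. Your write-up is in fact slightly more complete, since you spell out the converse direction (which the paper leaves as ``routinely checked'') and you do not need the paper's detour through Proposition~\ref{pro:4.4}.
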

\begin{proof}
Suppose $r\in A\otimes A$ is antisymmetric and $r^{\sharp}$ is an invertible relative Rota-Baxter operator of $(A,[\cdot,\cdot])$ associated to $(\mathcal{L}^{*}_{[\cdot,\cdot]},\mathcal{L}^{*}_{[\cdot,\cdot]}-\mathcal{R}^{*}_{[\cdot,\cdot]},A^{*})$.
Define a bilinear form $\omega$ on $A$ by
\begin{equation}\label{eq:34}
    \omega(x,y)=\langle (r^{\sharp})^{-1}x,y\rangle,\;\forall x,y\in A.
\end{equation}
Then $\omega$ is antisymmetric and nondegenerate.
Let $a^{*},b^{*},c^{*}\in A^{*}$ and $x=r^{\sharp}(a^{*}), y=r^{\sharp}(b^{*}), z=r^{\sharp}(c^{*})$.
Then we have
\begin{eqnarray*}
&&\langle [r^{\sharp}(a^{*}), r^{\sharp}(b^{*})], c^{*}\rangle=\langle [r^{\sharp}(a^{*}),  r^{\sharp}(b^{*})], (r^{\sharp})^{-1}z\rangle=\omega(z,[x, y]),\\
&&\langle r^{\sharp}\big( \mathcal{L}^{*}_{[\cdot,\cdot] }\big(r^{\sharp}(a^{*})\big)b^{*} \big),c^{*}\rangle=\langle r^{\sharp}\big(\mathcal{L}^{*}_{\circ}(x)b^{*}\big),  (r^{\sharp})^{-1}z\rangle=-\langle \mathcal{L}^{*}_{[\cdot,\cdot]}(x)b^{*},z\rangle\\
&&=\langle b^{*},[x, z]\rangle=\langle (r^{\sharp})^{-1}y,[x, z]\rangle =\omega(y,[x, z]),\\
&&\langle r^{\sharp}\Big( (\mathcal{L}^{*}_{[\cdot,\cdot] }-\mathcal{R}^{*}_{[\cdot,\cdot] })\big(r^{\sharp}(b^{*})\big)a^{*} \Big),c^{*}\rangle=
\langle r^{\sharp}\big( (\mathcal{L}^{*}_{[\cdot,\cdot] }-\mathcal{R}^{*}_{[\cdot,\cdot] })(y)a^{*} \big),  (r^{\sharp})^{-1}z\rangle\\
&&=-\langle (\mathcal{L}^{*}_{[\cdot,\cdot] }-\mathcal{R}^{*}_{[\cdot,\cdot] })(y)a^{*},z\rangle\\
&&=\langle a^{*}, [y, z]-[z, y]\rangle=\langle (r^{\sharp})^{-1}x, [y, z]-[z, y]\rangle=\omega(x,[y, z]-[z, y]).
\end{eqnarray*}
Hence by \eqref{eq:homo1} in Proposition \ref{pro:4.4}, we have \eqref{eq:2-coc}.

Conversely, suppose that $\omega$ is a nondegenerate antisymmetric bilinear form on $(A,[\cdot,\cdot])$ satisfying \eqref{eq:2-coc}.
Then we can routinely check that $r^{\sharp}:A^{*}\rightarrow A$ given by \eqref{eq:34} is  an invertible relative Rota-Baxter operator of $(A,[\cdot,\cdot])$ associated to $(\mathcal{L}^{*}_{[\cdot,\cdot]},\mathcal{L}^{*}_{[\cdot,\cdot]}-\mathcal{R}^{*}_{[\cdot,\cdot]},A^{*})$.
\end{proof}

Recall that a {\bf Connes cocycle} \cite{Bai2010} on an associative algebra $(A,\cdot)$ is an antisymmetric bilinear form $\omega$ such that
\begin{equation}\label{eq:cc}
    \omega(x\cdot y,z)+\omega(y \cdot z, x)+\omega(z\cdot x,y)=0,\;\forall x,y,z\in A.
\end{equation}

\begin{pro}
Suppose that  $\omega$ is a nondegenerate Connes cocycle on a commutative associative algebra $(A,\cdot)$.
Let $f:A\rightarrow A$ be a linear map and $\hat{f}:A\rightarrow A$ be the adjoint map of $f$ with respect to $\omega$, that is,
\begin{equation*}
    \omega\big(\hat{f}(x),y\big)=\omega\big(x,f(y)\big),\;\forall x,y\in A.
\end{equation*}
Then $\omega$ satisfies \eqref{eq:2-coc}, where  $(A,[\cdot,\cdot])$ is the special left-Alia algebra given by \eqref{eq:hat}.
\end{pro}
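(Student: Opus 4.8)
The plan is to expand both sides of \eqref{eq:2-coc} using the explicit formula $[x,y]=x\cdot f(y)-\hat{f}(x\cdot y)$ from \eqref{eq:hat} and then recognize the outcome as a sum of instances of the Connes cocycle identity \eqref{eq:cc}.

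First I would simplify the left-hand side. Since $(A,\cdot)$ is commutative, $\hat{f}(y\cdot z)=\hat{f}(z\cdot y)$, so these terms cancel in the difference and $[y,z]-[z,y]=y\cdot f(z)-z\cdot f(y)$. Hence the left-hand side of \eqref{eq:2-coc} equals $\omega(x,y\cdot f(z))-\omega(x,z\cdot f(y))$, which no longer involves $\hat{f}$.

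Next I would expand the right-hand side $\omega([x,z],y)-\omega([x,y],z)$ into four terms, two of which carry $\hat{f}$, and use the defining relation of the adjoint, $\omega(\hat{f}(a),b)=\omega(a,f(b))$, to rewrite $\omega(\hat{f}(x\cdot z),y)=\omega(x\cdot z,f(y))$ and $\omega(\hat{f}(x\cdot y),z)=\omega(x\cdot y,f(z))$. After this step both sides are expressions in $\omega$, the product $\cdot$, and $f$ only.

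Finally, I would move everything to one side and, using the antisymmetry of $\omega$ together with the commutativity of $\cdot$, sort the six resulting terms into the three that contain the factor $f(z)$ and the three that contain the factor $f(y)$. The first triple is, up to an overall sign, exactly $\omega(x\cdot y,f(z))+\omega(y\cdot f(z),x)+\omega(f(z)\cdot x,y)$, which vanishes by \eqref{eq:cc} applied to the triple $(x,y,f(z))$; the second triple is $\omega(x\cdot z,f(y))+\omega(z\cdot f(y),x)+\omega(f(y)\cdot x,z)$, which vanishes by \eqref{eq:cc} applied to $(x,z,f(y))$. This proves \eqref{eq:2-coc}. I do not expect a genuine obstacle here; the only points needing care are the sign bookkeeping forced by the antisymmetry of $\omega$ and the observation that the cancellation of the $\hat{f}$-terms in $[y,z]-[z,y]$ really uses the commutativity of $\cdot$.
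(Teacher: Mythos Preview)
Your proposal is correct and follows essentially the same approach as the paper: both expand the difference $\omega(x,[y,z]-[z,y])-\omega([x,z],y)+\omega([x,y],z)$, use the adjoint relation to eliminate $\hat{f}$, and then invoke the Connes cocycle identity \eqref{eq:cc}. Your write-up is in fact slightly more explicit than the paper's, since you spell out that the six remaining terms split into two instances of \eqref{eq:cc} applied to $(x,y,f(z))$ and $(x,z,f(y))$, whereas the paper simply cites \eqref{eq:cc} at the final step.
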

\begin{proof}
For all $x,y,z\in A$, we have
{\small
\begin{eqnarray*}
&&\omega(x,[y, z]-[z, y])-\omega([x, z],y)+\omega([x, y],z)\\
&&=\omega\big( x,y\cdot f(z)-z\cdot f(y)\big)-\omega\big(x\cdot f(z)-\hat{f}(x\cdot z),y\big)+\omega\big(x\cdot f(y)-\hat{f}(x\cdot y),z\big)\\
&&=\omega\big(x,y\cdot f(z)-z\cdot f(y)\big)-\omega\big(x\cdot f(z),y\big)+\omega\big(x\cdot z,f(y)\big)+\omega\big(x\cdot f(y),z\big)-\omega(x\cdot y,f(z)\big)\\
&&\overset{\eqref{eq:cc}}{=}0.
\end{eqnarray*}}
\end{proof}

\begin{thm}\label{thm:4.10}
Let $(A,[\cdot,\cdot])$ be a left-Alia algebra with a representation $(l,r,V)$.
Let $T:V\rightarrow A$ be a linear map and $T_{\sharp}\in V^{*}\otimes A\subset (A\oplus V^{*})\otimes (A\oplus V^{*})$ be given by
\begin{equation}
    \langle T_{\sharp}, u\otimes a^{*}\rangle=\langle T(u),a^{*}\rangle,\;\forall u\in V, a^{*}\in A^{*}.
\end{equation}
Then $r=T_{\sharp}-\tau(T_{\sharp})$ is an antisymmetric solution of the left-Alia Yang-Baxter equation in $A\ltimes_{l^{*}, l^{*}-r^{*}}V^{*}$ if and only if $T$ is a relative Rota-Baxter operator of $(A,[\cdot,\cdot])$ associated to $(l,r,V)$.
\end{thm}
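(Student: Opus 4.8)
The plan is to deduce the statement from Proposition \ref{pro:4.4} applied to the semidirect product left-Alia algebra $\widehat{A}:=A\ltimes_{l^{*},\,l^{*}-r^{*}}V^{*}$, which is indeed a left-Alia algebra by Proposition \ref{pro:dual rep} together with the semidirect product construction. Since $r=T_{\sharp}-\tau(T_{\sharp})$ is antisymmetric by construction, Corollary \ref{cor:4.3} reduces the assertion ``$r$ is an antisymmetric solution of the left-Alia Yang-Baxter equation in $\widehat{A}$'' to the single identity $\mathrm{Al}(r)=0$ in $\widehat{A}^{\otimes 3}$.

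First I would pin down $r^{\sharp}$. Fixing a basis $\{e_{i}\}$ of $V$ with dual basis $\{e_{i}^{*}\}$ we have $T_{\sharp}=\sum_{i}e_{i}^{*}\otimes T(e_{i})$, hence $r=\sum_{i}\big(e_{i}^{*}\otimes T(e_{i})-T(e_{i})\otimes e_{i}^{*}\big)\in\widehat{A}\otimes\widehat{A}$. Under the identification $\widehat{A}^{*}=A^{*}\oplus V$ with pairing $\langle x+\xi,\,a^{*}+u\rangle=\langle x,a^{*}\rangle+\langle\xi,u\rangle$, a direct computation with this basis gives $r^{\sharp}(a^{*}+u)=T(u)-T^{*}(a^{*})$, where $T^{*}:A^{*}\to V^{*}$ is the transpose of $T$; in particular $r^{\sharp}|_{V}=T$ and $r^{\sharp}|_{A^{*}}=-T^{*}$.

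Next, since $\tau\otimes\mathrm{id}$ is invertible, equation \eqref{eq:homo1} of Proposition \ref{pro:4.4} (with $A$ replaced by $\widehat{A}$) shows that $\mathrm{Al}(r)=0$ if and only if $r^{\sharp}:\widehat{A}^{*}\to\widehat{A}$ is a homomorphism of left-Alia algebras, where $\widehat{A}^{*}$ carries the bracket from \eqref{eq:com},
\[
[\alpha,\beta]_{\widehat{A}^{*}}=\mathcal{L}^{*}_{\widehat{A}}\big(r^{\sharp}(\alpha)\big)\beta+\big(\mathcal{L}^{*}_{\widehat{A}}-\mathcal{R}^{*}_{\widehat{A}}\big)\big(r^{\sharp}(\beta)\big)\alpha ,
\]
with $\mathcal{L}^{*}_{\widehat{A}},\mathcal{R}^{*}_{\widehat{A}}:\widehat{A}\to\mathrm{End}(\widehat{A}^{*})$ the coadjoint-type maps of $\widehat{A}$. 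I would then test this homomorphism condition on the summands of $\widehat{A}^{*}=A^{*}\oplus V$. For $\alpha=u$ and $\beta=v$ in $V$, using the explicit bracket \eqref{eq:semi-d} on $\widehat{A}$ together with the definitions of $l^{*},r^{*}$, one computes $\mathcal{L}^{*}_{\widehat{A}}(T u)v=l(T u)v$ and $\big(\mathcal{L}^{*}_{\widehat{A}}-\mathcal{R}^{*}_{\widehat{A}}\big)(T v)u=r(T v)u$ inside $V\subset\widehat{A}^{*}$, so that $[u,v]_{\widehat{A}^{*}}=l(T u)v+r(T v)u\in V$ and, since $r^{\sharp}|_{V}=T$, the homomorphism identity becomes precisely
\[
[T(u),T(v)]=T\big(l(T u)v+r(T v)u\big),
\]
that is, \eqref{eq:oop}. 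When $\alpha,\beta\in A^{*}$ both sides vanish (as $[V^{*},V^{*}]_{\widehat{A}}=0$ and $r^{\sharp}$ annihilates the resulting $A^{*}$-terms), and when exactly one of $\alpha,\beta$ lies in $A^{*}$ the homomorphism identity reduces to \eqref{eq:oop} applied to both orderings of its $V$-arguments. Hence both directions follow: if \eqref{eq:oop} holds then $r^{\sharp}$ is a homomorphism in all cases, so $\mathrm{Al}(r)=0$; conversely if $\mathrm{Al}(r)=0$ then $r^{\sharp}$ is a homomorphism, and its $(V,V)$ instance is exactly \eqref{eq:oop}.

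I expect the only genuine work to be the two short computations in the previous paragraph — the determination of $r^{\sharp}$ and the evaluation of the restricted coadjoint-type actions $\mathcal{L}^{*}_{\widehat{A}},\mathcal{R}^{*}_{\widehat{A}}$ of $\widehat{A}$ on $\widehat{A}^{*}$ — where the point demanding care is keeping track of the sign conventions in $l^{*}$, $r^{*}$ and $T^{*}$. The alternative of expanding $\mathrm{Al}(r)$ directly in $\widehat{A}^{\otimes 3}$ (organized, say, by the grading placing $A$ in degree $0$ and $V^{*}$ in degree $1$, under which $\mathrm{Al}(r)$ lands in the component with one $A$-leg and two $V^{*}$-legs, then pairing against test elements of $V\oplus A^{*}$) is considerably more laborious and is avoided by the appeal to Proposition \ref{pro:4.4}.
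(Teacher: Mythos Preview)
Your approach is correct and genuinely different from the paper's. The paper expands $\mathrm{Al}(r)$ directly in $\widehat{A}^{\otimes 3}$ using a basis $\{v_i\}$ of $V$ and its dual, writes $r=\sum_i v_i^{*}\otimes Tv_i - Tv_i\otimes v_i^{*}$, and computes each of the three summands of $\mathrm{Al}(r)$ in the semidirect product. After simplification the expression organizes itself as
\[
\mathrm{Al}(r)=\sum_{i,j}\Big([Tv_i,Tv_j]-T\big(l(Tv_i)v_j+r(Tv_j)v_i\big)\Big)\otimes v_i^{*}\otimes v_j^{*}+\text{(two analogous terms)},
\]
each of which vanishes precisely when $T$ satisfies \eqref{eq:oop}. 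So the paper takes exactly the route you dismiss as ``considerably more laborious''; in practice it is a one-page calculation.

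Your route via Proposition~\ref{pro:4.4} is a clean alternative: you trade the tensor expansion for the operator identity \eqref{eq:homo1}, reducing $\mathrm{Al}(r)=0$ to the homomorphism property of $r^{\sharp}$, and then check this on the four blocks of $\widehat{A}^{*}=A^{*}\oplus V$. Your computation $\mathcal{L}^{*}_{\widehat{A}}(Tu)v=l(Tu)v$ and $(\mathcal{L}^{*}_{\widehat{A}}-\mathcal{R}^{*}_{\widehat{A}})(Tv)u=r(Tv)u$ is correct, and the $(V,V)$ block indeed gives \eqref{eq:oop} on the nose. Two small points of precision: in the $(A^{*},A^{*})$ case the bracket $[a^{*},b^{*}]_{\widehat{A}^{*}}$ is already zero (not merely killed by $r^{\sharp}$), since $\mathcal{L}^{*}_{\widehat{A}}(\xi)$ and $\mathcal{R}^{*}_{\widehat{A}}(\xi)$ vanish on $A^{*}$ for $\xi\in V^{*}$; and in the mixed case $(a^{*},v)$ the resulting identity is the antisymmetrized form $[Tv,Tw]-[Tw,Tv]=T\big((l-r)(Tv)w-(l-r)(Tw)v\big)$, which is a consequence of \eqref{eq:oop} rather than literally \eqref{eq:oop} ``in both orderings''. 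With those clarifications your argument goes through. The trade-off is that the paper's direct expansion avoids the case analysis and makes the vanishing manifest in one line, while your method is more conceptual and reuses Proposition~\ref{pro:4.4} instead of recomputing.
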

\begin{proof}
Let $\{ v_{1},\cdots,v_{n} \}$ be a basis of $V$ and $\{ v^{*}_{1},\cdots,v^{*}_{n} \}$ be the dual basis.
Then we have
\begin{eqnarray*}
    &&T_{\sharp}=\sum_{i=1}^{n}v^{*}_{i}\otimes T v_{i} \in V^{*}\otimes A\subset (A\oplus V^{*})\otimes (A\oplus V^{*}),\\
    &&r=T_{\sharp}-\tau(T_{\sharp})=\sum_{i=1}^{n}v^{*}_{i}\otimes T v_{i} -T v_{i} \otimes v^{*}_{i}.
\end{eqnarray*}
Note that
\begin{equation}
l^{*}(Tv_{i})v^{*}_{j}=-\sum_{k=1}^{n}\langle v^{*}_{j}, l(Tv_{i})v_{k}\rangle v^{*}_{k},\;r^{*}(Tv_{i})v^{*}_{j}=-\sum_{k=1}^{n}\langle v^{*}_{j}, r(Tv_{i})v_{k}\rangle v^{*}_{k}.
\end{equation}
Then we have
{\small
\begin{eqnarray*}
 \mathrm{Al}(r)&=&\sum_{i,j=1}^{n} [Tv_{i}, Tv_{j}]\otimes v^{*}_{i}\otimes v^{*}_{j}-[v^{*}_{i},Tv_{j}]_d\otimes Tv_{i}\otimes v^{*}_{j}-[Tv_{i},v^{*}_{j}]_d\otimes v^{*}_{i}\otimes Tv_{j}\\
&&+v^{*}_{i}\otimes\big( [v^{*}_{j}, Tv_{i}]_d-[Tv_{i},v^{*}_{j}]_d\big)\otimes Tv_{j}
+v^{*}_{i}\otimes ([Tv_{i}, Tv_{j}]-[Tv_{j}, Tv_{i}])\otimes v^{*}_{j}\\
&&
+Tv_{i}\otimes \big( [Tv_{j},v^{*}_{i}]_d-[v^{*}_{i},Tv_{j}]_d \big)\otimes v^{*}_{j}\\
&&-v^{*}_{i}\otimes v^{*}_{j}\otimes [Tv_{j}, Tv_{i}]+v^{*}_{i}\otimes Tv_{j}\otimes [v^{*}_{j},Tv_{i}]_d+Tv_{i}\otimes v^{*}_{j}\otimes [Tv_{j},v^{*}_{i}]_d\\
&=&\sum_{i,j} [Tv_{i}, Tv_{j}]\otimes v^{*}_{i}\otimes v^{*}_{j}-(l^{*}-r^{*})(Tv_{j})v^{*}_{i}\otimes Tv_{i}\otimes v^{*}_{j}-l^{*}(Tv_{i})v^{*}_{j}\otimes v^{*}_{i}\otimes Tv_{j}\\
&&-v^{*}_{i}\otimes r^{*}(Tv_{i})v^{*}_{j}\otimes Tv_{j}+v^{*}_{i}\otimes ( [Tv_{i}, Tv_{j}]-[Tv_{j}, Tv_{i}]  )\otimes v^{*}_{j}+Tv_{i}\otimes r^{*}(Tv_{j})v^{*}_{i}\otimes v^{*}_{j}\\
&&-v^{*}_{i}\otimes v^{*}_{j}\otimes [Tv_{j}, Tv_{i}]+v^{*}_{i}\otimes Tv_{j}\otimes (l^{*}-r^{*}) (Tv_{i})v^{*}_{j}+Tv_{i}\otimes v^{*}_{j}\otimes l^{*}(Tv_{j})v^{*}_{i}\\
&=&\sum_{i,j} \Big( [Tv_{i}, Tv_{j}]-T\big(l(Tv_{i})v_{j}+r(Tv_{j})v_{i}\big)  \Big)\otimes v^{*}_{i}\otimes v^{*}_{j}\\
&&+v^{*}_{i}\otimes ([Tv_{i}, Tv_{j}]-[Tv_{j}, Tv_{i}]+T\big((l-r)(Tv_{j})v_{i}+(r-l)(Tv_{i})v_{j}\big)\Big)\otimes v^{*}_{j}\\
&&+v^{*}_{i}\otimes v^{*}_{j}\otimes \Big ( T\big(l(Tv_{j})v_{i}+r(Tv_{i})v_{j}\big)-[Tv_{j}, Tv_{i} ] \Big).
\end{eqnarray*}}
Hence the conclusion follows.
\end{proof}

\section{Pre-left-Alia algebras}

In this section,  we first introduce the notion of a pre-left-Alia algebra,  which is the underlying algebraic structure of
a relative Rota-Baxter operator with respect to a representation $(l,r,V)$ of a left-Alia algebra. Then we prove that there is a
natural antisymmetric solution of the left-Alia Yang-Baxter equation in the left-Alia algebra $A\ltimes_{\mathcal{L}^{*}_{\succ},\mathcal{L}^{*}_{\succ}-\mathcal{R}^{*}_{\prec} }A^{*}$.
\begin{defi}
A {\bf pre-left-Alia algebra} is a triple $(A,\succ,\prec)$, where $A$
 is a vector space and $\succ,\prec:A\otimes A\rightarrow A$ are multiplications such that the following equation holds:
 {\small
 \begin{equation}\label{eq:pre}
    (x\succ y+x\prec y)\succ z-(y\succ x+y\prec x)\succ z+(y\succ z-z\prec y)\prec x +(z\prec x-x\succ z)\prec y =0,
    \end{equation}}
 for all $x,y,z\in A$.
 \end{defi}

Recall a \textbf{Zinbiel algebra} \cite{Lod}  is a  vector space $A$ together with a multiplication $\star:A\otimes A\rightarrow A$ such that the following equation holds:
    \begin{equation}\label{eq:Zinb}
        x\star(y\star z)=(y\star x)\star z+(x\star y)\star z, \;\;\forall x,y,z\in A.
    \end{equation}
From the operadic viewpoint, Zinbiel algebras are known as the successors of commutative associative algebras, as well as the Koszul dual to Leibniz algebras (see \cite{Lod2012}). We shall show that there are pre-left-Alia algebras constructed Zinbiel algebras with linear maps.

\begin{pro}
Let $(A,\star)$ be a Zinbiel algebra and $f,g:A\rightarrow A$ be linear maps.
Then there is a pre-left-Alia algebra $(A\succ,\prec)$ given by
\begin{equation*}
    x\succ y=x\star f(y)+g(x\star y),\; x\prec y=f(y)\star x+g(y\star x),\;\forall x,y\in A.
\end{equation*}
\end{pro}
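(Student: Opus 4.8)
The plan is to substitute the defining formulas into \eqref{eq:pre} and reduce everything to the Zinbiel identity \eqref{eq:Zinb}. First I would record two immediate consequences of \eqref{eq:Zinb}: rewriting it gives $(a\star b)\star c+(b\star a)\star c=a\star(b\star c)$ for all $a,b,c\in A$, and since the right-hand side is unchanged under $a\leftrightarrow b$ we also get $a\star(b\star c)=b\star(a\star c)$, i.e. the left multiplications $L_a\colon x\mapsto a\star x$ of $(A,\star)$ commute. These two facts will carry the whole argument.

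Next I would simplify the three combinations that occur in \eqref{eq:pre}. Straight from the definitions the $g$-terms cancel in pairs, so
\[
y\succ z-z\prec y=y\star f(z)-f(y)\star z,\qquad z\prec x-x\succ z=f(x)\star z-x\star f(z),
\]
and, since the $g$-parts of $x\succ y+x\prec y$ and of $y\succ x+y\prec x$ coincide, the first two terms of \eqref{eq:pre} combine into $C\succ z$ with $C:=x\star f(y)+f(y)\star x-y\star f(x)-f(x)\star y$. Abbreviating $D:=y\star f(z)-f(y)\star z$ and $E:=f(x)\star z-x\star f(z)$ and expanding the remaining right $\prec$-products, the left-hand side of \eqref{eq:pre} takes the form
\[
\big(C\star f(z)+f(x)\star D+f(y)\star E\big)+g\big(C\star z+x\star D+y\star E\big).
\]
Since $C$, $D$, $E$ do not involve $g$, it suffices to prove that the $g$-free bracket and the argument of $g$ each vanish identically.

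For the argument of $g$: applying $(a\star b)\star c+(b\star a)\star c=a\star(b\star c)$ collapses $C\star z$ to $x\star(f(y)\star z)-y\star(f(x)\star z)$; adding $x\star D$ and $y\star E$ cancels the mixed monomials and leaves $x\star(y\star f(z))-y\star(x\star f(z))$, which is $0$ by commutativity of the left multiplications. For the $g$-free bracket: the same rearrangement turns $C\star f(z)$ into $x\star(f(y)\star f(z))-y\star(f(x)\star f(z))$, and then a few applications of $a\star(b\star c)=b\star(a\star c)$ (to rewrite $x\star(f(y)\star f(z))$, $f(x)\star(y\star f(z))$, and to pair $f(x)\star(f(y)\star z)$ with $f(y)\star(f(x)\star z)$) make the whole bracket telescope to $0$.

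The only real difficulty is bookkeeping: one must keep track of the many $\star$-monomials and apply the Zinbiel identity in the correct direction, but no input beyond \eqref{eq:Zinb} is used. One may streamline the presentation by working with the symmetrized product $x\cdot y:=x\star y+y\star x$, under which $(A,\cdot)$ is commutative associative and $x\succ y+x\prec y=x\cdot f(y)+g(x\cdot y)$ is the special left-Alia bracket attached to $(A,\cdot,f,g)$ by Theorem \ref{special}; but the two identities recorded in the first step are all that the verification actually needs.
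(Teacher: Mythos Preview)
Your proof is correct and follows essentially the same approach as the paper: both expand the left-hand side of \eqref{eq:pre}, observe that the $g$-contributions in $x\succ y+x\prec y$ and $y\succ x+y\prec x$ coincide (equivalently, that $x\succ y-y\prec x=x\star f(y)-f(x)\star y$), split the resulting expression into a $g$-free part and an argument of $g$, and verify that each vanishes by the Zinbiel identity. Your presentation is in fact slightly more explicit than the paper's, since you isolate the two consequences $(a\star b+b\star a)\star c=a\star(b\star c)$ and $a\star(b\star c)=b\star(a\star c)$ at the outset; the paper simply cites \eqref{eq:Zinb} at the end of the computation.
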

\begin{proof}
Let $x,y,z\in A$.
We first observe that
\begin{equation*}
    x\succ y-y\prec x=x\star f(y)-f(x)\star y.
\end{equation*}
Then we have
{\small
\begin{eqnarray*}
&&(x\succ y+x\prec y)\succ z-(y\succ x+y\prec x)\succ z+(y\succ z-z\prec y)\prec x +(z\prec x-x\succ z)\prec y\\
&&=\big(x\star f(y)-f(x)\star y\big)\succ z-\big(y\star f(x)-f(y)\star x\big)\succ z\\
&&\ \ +\big( y\star f(z)-f(y)\star z\big)\prec x+\big(f(x)\star z-z\star f(x)\big)y\\
&&=\big(x\star f(y)-f(x)\star y\big)\star z+g\Big( \big(x\star f(y)\big)\star z-\big(f(x)\star y\big)\star z  \Big)\\
&&\ \ -\big(y\star f(x)-f(y)\star x\big)\star z-g\Big( \big(y\star f(x)\big)\star z-\big(f(y)\star x\big)\star z  \Big)\\
&&\ \ +f(x)\star\big( y\star f(z)-f(y)\star z\big)+g\Big(x\star \big(y\star f(z)\big)-x\star \big(f(y)\star z\big)\Big)\\
&&\ \ +f(y)\star\big( f(x)\star z-f(z)\star x\big)+g\Big(y\star \big(f(x)\star z\big)-y\star \big(x\star f(z)\big)\Big)\\
&&\overset{\eqref{eq:Zinb}}{=}0.
\end{eqnarray*}
}
Hence $(A\succ,\prec)$ is a pre-left-Alia algebra.
\end{proof}

On the other hand, recall a {\bf pre-Lie algebra}  \cite{Bur, Bai2021.2}  is a  vector space $A$ together with a multiplication $\star:A\otimes A\rightarrow A$ such that the following equation holds:
    \begin{equation}\label{eq:pre-Lie}
     (x\star y)\star z-x\star(y\star z)=(y\star x)\star z-y\star (x\star z),\;\forall x,y,z\in A.
    \end{equation}
Pre-Lie algebras play an important role in diverse areas of mathematics and mathematical physics.
From the operadic viewpoint, pre-algebras are known as the successors of Lie algebras.

 \begin{pro}\label{pro:6.3}
    Let $(A,\star)$ be a pre-Lie algebra.
    Then there is a pre-left-Alia algebra $(A\succ,\prec)$ given by
    \begin{equation*}
        x\succ y=x\star y,\; x\prec y=-y\star x,\;\forall x,y\in A.
    \end{equation*}
 \end{pro}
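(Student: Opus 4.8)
The plan is to verify the defining identity \eqref{eq:pre} for $(A,\succ,\prec)$ by a direct substitution that collapses everything onto the pre-Lie identity \eqref{eq:pre-Lie}. First I would record the elementary consequences of the definitions $x\succ y=x\star y$ and $x\prec y=-y\star x$: for all $a,b\in A$ one has $a\prec b=-b\star a$, and
\[
x\succ y+x\prec y=x\star y-y\star x,
\]
so that $x\succ y+x\prec y$ is exactly the commutator (the sub-adjacent Lie bracket) determined by $\star$.

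Next I would expand the four summands on the left-hand side of \eqref{eq:pre} one at a time. The first two summands become $(x\star y-y\star x)\succ z=(x\star y-y\star x)\star z$ and $-(y\star x-x\star y)\succ z=(x\star y-y\star x)\star z$, which add up to $2(x\star y-y\star x)\star z$. For the third summand, $y\succ z-z\prec y=y\star z+y\star z=2\,y\star z$, and applying $\prec x$ to this yields $-2\,x\star(y\star z)$. For the fourth summand, $z\prec x-x\succ z=-x\star z-x\star z=-2\,x\star z$, and applying $\prec y$ yields $2\,y\star(x\star z)$.

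Summing the three contributions and factoring out $2$, the left-hand side of \eqref{eq:pre} becomes
\[
2\Big(\big((x\star y)\star z-x\star(y\star z)\big)-\big((y\star x)\star z-y\star(x\star z)\big)\Big),
\]
which is zero precisely by the pre-Lie identity \eqref{eq:pre-Lie}. Hence \eqref{eq:pre} holds and $(A,\succ,\prec)$ is a pre-left-Alia algebra. There is no real obstacle here: the argument is a short computation, and the only point requiring care is tracking the sign flips coming from $x\prec y=-y\star x$, after which the reduction to \eqref{eq:pre-Lie} is immediate.
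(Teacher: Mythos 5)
Your computation is correct and follows exactly the same route as the paper: expand the four summands of \eqref{eq:pre}, obtain $2(x\star y-y\star x)\star z-2\,x\star(y\star z)+2\,y\star(x\star z)$, and observe that this vanishes by \eqref{eq:pre-Lie}. The signs are all tracked correctly, so nothing further is needed.
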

\begin{proof}
For all $x,y,z\in A$, we have
{\small
\begin{eqnarray*}
&&(x\succ y+x\prec y)\succ z-(y\succ x+y\prec x)\succ z+(y\succ z-z\prec y)\prec x +(z\prec x-x\succ z)\prec y\\
&&=2(x\star y-y\star x)\star z- 2x\star(y\star z)+2y\star (x\star z)\\
&&\overset{\eqref{eq:pre-Lie}}{=}0.
\end{eqnarray*}}
Hence $(A\succ,\prec)$ is a pre-left-Alia algebra.
\end{proof}

\begin{pro}\label{pro:4.14}
Let $(A,\succ,\prec)$ be a pre-left-Alia algebra.
Then the multiplication $[\cdot,\cdot]:A\otimes A\rightarrow A$ given by
\begin{equation}
[x, y]=x\succ y+x\prec y,\;\forall x,y\in A,
\end{equation}
defines a left-Alia algebra which is called the {\bf sub-adjacent left-Alia algebra} of $(A,\succ,\prec)$, and $(A,\succ,\prec)$ is called a {\bf compatible pre-left-Alia algebra} of $(A,[\cdot,\cdot])$.
Moreover, $(\mathcal{L}_{\succ},\mathcal{R}_{\prec},A)$ is a representation of $(A,[\cdot,\cdot])$, and the identity map $\mathrm{id}:A\rightarrow A$ is a relative Rota-Baxter operator of $(A,[\cdot,\cdot])$ with respect to  $(\mathcal{L}_{\succ},\mathcal{R}_{\prec},A)$.
\end{pro}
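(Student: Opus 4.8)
All three assertions reduce to the defining identity \eqref{eq:pre} of a pre-left-Alia algebra. First I would check the symmetric Jacobi identity \eqref{eq:0-Alia} for the sub-adjacent product $[x,y]=x\succ y+x\prec y$, which is equivalent to the vanishing of the cyclic sum $\sum_{\mathrm{cyc}}\big([[x,y],z]-[[y,x],z]\big)$. Expanding,
\[
[[x,y],z]-[[y,x],z]=\big([x,y]-[y,x]\big)\succ z+\big([x,y]-[y,x]\big)\prec z ,
\]
and since the first two terms of \eqref{eq:pre} are exactly $\big([x,y]-[y,x]\big)\succ z$, that identity lets me rewrite the $\succ z$ summand as $-(y\succ z-z\prec y)\prec x-(z\prec x-x\succ z)\prec y$. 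After this substitution every term of the cyclic sum has the shape $(\,\cdot\,)\prec x$, $(\,\cdot\,)\prec y$, or $(\,\cdot\,)\prec z$; grouping the nine terms by this outermost right factor, each group of three cancels. For instance the terms ending in $\prec z$ sum to $\big(x\succ y+x\prec y-y\succ x-y\prec x\big)\prec z+\big(y\succ x-x\prec y\big)\prec z+\big(y\prec x-x\succ y\big)\prec z=0$, and the $\prec x$- and $\prec y$-groups are the cyclic shifts of this one. This reorganization is the only step needing real bookkeeping.

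Next I would verify that $(\mathcal{L}_{\succ},\mathcal{R}_{\prec},A)$ is a representation by writing out \eqref{eq:rep 0-Alia} with $l=\mathcal{L}_{\succ}$, $r=\mathcal{R}_{\prec}$, $V=A$, evaluated on $v\in A$. Using $\mathcal{L}_{\succ}(x)w=x\succ w$ and $\mathcal{R}_{\prec}(x)w=w\prec x$, its left-hand side is $(x\succ y+x\prec y)\succ v-(y\succ x+y\prec x)\succ v$ and its right-hand side is $(v\prec y)\prec x-(v\prec x)\prec y+(x\succ v)\prec y-(y\succ v)\prec x$; moving all terms to one side gives precisely \eqref{eq:pre} with $z$ replaced by $v$. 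So the representation axiom holds with no further work.

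Finally, for $T=\mathrm{id}$ associated to $(\mathcal{L}_{\succ},\mathcal{R}_{\prec},A)$, the relative Rota-Baxter equation \eqref{eq:oop} reads $[u,v]=\mathcal{L}_{\succ}(u)v+\mathcal{R}_{\prec}(v)u=u\succ v+u\prec v$, which is the definition of $[\cdot,\cdot]$, so this is immediate; the terminology ``compatible pre-left-Alia algebra'' is then just a name. The only genuine obstacle anywhere in the proof is the term-by-term cancellation in the first step, and it becomes routine once \eqref{eq:pre} has been used to eliminate the $\succ$-terms.
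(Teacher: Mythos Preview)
Your proof is correct and follows essentially the same approach as the paper. The paper's argument for the first assertion simply expands the full cyclic expression $[[x,y],z]+[[y,z],x]+[[z,x],y]-[[y,x],z]-[[z,y],x]-[[x,z],y]$ into its twelve $\succ,\prec$ summands and then invokes \eqref{eq:pre} (implicitly three times, once per cyclic shift); your rewriting of each $([x,y]-[y,x])\succ z$ via \eqref{eq:pre} followed by grouping by the outermost $\prec$ factor is the same computation made explicit. For the representation and Rota-Baxter claims the paper merely says ``the other results are straightforward,'' so your verification that \eqref{eq:rep 0-Alia} with $l=\mathcal{L}_{\succ}$, $r=\mathcal{R}_{\prec}$ collapses to \eqref{eq:pre}, and that \eqref{eq:oop} for $T=\mathrm{id}$ is the definition of $[\cdot,\cdot]$, fills in exactly what the paper omits.
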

\begin{proof}
For all $x,y,z\in A$, we have
{\small
\begin{eqnarray*}
&&[[x, y], z]+[[y\circ z], x]+[[z,x], y]-[[y, x], z]-[[z,y], x]-[[x,z], y]\\
&&=(x\succ y+x\prec y)\succ z+(x\succ y+x\prec y)\prec z+(y\succ z+y\prec z)\succ x+(y\succ z+y\prec z)\prec x\\
&&\ \ +(z\succ x+z\prec x)\succ y+(z\succ x+z\prec x)\prec y-(y\succ x+y\prec x)\succ z-(y\succ x+y\prec x)\prec z\\
&&\ \ -(z\succ y+z\prec y)\succ x-(z\succ y+z\prec y)\prec x-(x\succ z+x\prec z)\succ y-(x\succ z+x\prec z)\prec y\\
&&\overset{\eqref{eq:pre}}{=}0.
\end{eqnarray*}}
Hence $(A,[\cdot,\cdot])$ is a left-Alia algebra.
The other results are straightforward.
\end{proof}

\begin{lem}\label{lem:4.15}
    Let $T:V\rightarrow A$ be a relative Rota-Baxter operator of a left-Alia algebra with respect to a representation $(l,r,V)$.
    Then there is a pre-left-Alia algebra $(V,\triangleright,\triangleleft)$ given by
    \begin{equation}\label{eq:V-pre}
        u\triangleright v=l(Tu)v,\; u\triangleleft v=r(Tv)u,\;\forall u,v\in V.
    \end{equation}
\end{lem}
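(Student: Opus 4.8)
The plan is to verify directly that the two multiplications $\triangleright,\triangleleft$ defined in \eqref{eq:V-pre} satisfy the pre-left-Alia identity \eqref{eq:pre}. Fix $u,v,w\in V$. The crucial preliminary observation is that the ``$\succ$-outside'' part only sees the image of its first argument under $T$, and that by the relative Rota-Baxter condition \eqref{eq:oop},
\begin{equation*}
T(u\triangleright v+u\triangleleft v)=T\big(l(Tu)v+r(Tv)u\big)=[T(u),T(v)].
\end{equation*}
Hence the first two terms of \eqref{eq:pre}, namely $(u\triangleright v+u\triangleleft v)\triangleright w-(v\triangleright u+v\triangleleft u)\triangleright w$, collapse to $l\big([T(u),T(v)]\big)w-l\big([T(v),T(u)]\big)w$.

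Next I would expand the remaining two terms of \eqref{eq:pre} using only the definitions in \eqref{eq:V-pre}: since $v\triangleright w-w\triangleleft v=l(Tv)w-r(Tv)w$ and $w\triangleleft u-u\triangleright w=r(Tu)w-l(Tu)w$, and $a\triangleleft b=r(Tb)a$, one gets
\begin{align*}
(v\triangleright w-w\triangleleft v)\triangleleft u&=r(Tu)l(Tv)w-r(Tu)r(Tv)w,\\
(w\triangleleft u-u\triangleright w)\triangleleft v&=r(Tv)r(Tu)w-r(Tv)l(Tu)w.
\end{align*}
Adding all four contributions, the left-hand side of \eqref{eq:pre} becomes
\begin{equation*}
\big(l([T(u),T(v)])-l([T(v),T(u)])\big)w+r(Tu)l(Tv)w-r(Tu)r(Tv)w+r(Tv)r(Tu)w-r(Tv)l(Tu)w.
\end{equation*}

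Finally I would invoke the representation axiom \eqref{eq:rep 0-Alia} with $x=T(u)$, $y=T(v)$ applied to the vector $w$, which rewrites the first summand $\big(l([T(u),T(v)])-l([T(v),T(u)])\big)w$ as $r(Tu)r(Tv)w-r(Tv)r(Tu)w+r(Tv)l(Tu)w-r(Tu)l(Tv)w$; substituting this in, the eight terms cancel in pairs, so the expression vanishes and $(V,\triangleright,\triangleleft)$ is a pre-left-Alia algebra. There is no substantive obstacle: the proof is a direct computation, and the only point that requires care is the bookkeeping which matches the ``$l$-part'' of the first two terms of \eqref{eq:pre} — available only after applying \eqref{eq:oop} — against the left-hand side of the representation identity \eqref{eq:rep 0-Alia}, with the ``$r$-part'' terms of \eqref{eq:pre} exactly cancelling the $r$-terms that identity produces.
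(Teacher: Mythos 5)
Your proof is correct and follows essentially the same route as the paper: expand the pre-left-Alia identity, collapse the two $\triangleright$-terms via the relative Rota-Baxter condition \eqref{eq:oop} into $l([Tu,Tv])w-l([Tv,Tu])w$, and cancel against the remaining $r$-terms using the representation axiom \eqref{eq:rep 0-Alia}. Your bookkeeping is in fact cleaner than the paper's displayed computation, which contains a couple of typographical slips in the intermediate lines.
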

\begin{proof}
    For all $u,v,w\in V$, we have
    {\small
    \begin{eqnarray*}
&& (u\triangleright v+u\triangleleft v)\triangleright w-(v\triangleright u+v\triangleleft u)\triangleright w+(v\triangleright w-w\triangleleft v)\triangleleft u+(w\triangleleft u-u\triangleleft w)\triangleleft v\\
&&=lT\big( l(Tu)v+r(Tv)u \big)w-lT\big( l(Tv)u+r(Tu)v\big)w\\
&&\ \ +r(Tu)r(Tv)w-r(Tu)r(Tv)w+r(Tv)r(Tu)w-r(Tv)l(Tu)w\\
&&\overset{\eqref{eq:oop}}{=}l([Tu, Tv])-l([Tv, Tu])w+r(Tu)l(Tv)w-r(Tu)r(Tv)w+r(Tv)r(Tu)w-r(Tv)l(Tu)w\\
&&\overset{\eqref{eq:rep 0-Alia}}{=}0.
    \end{eqnarray*}}
    Hence the conclusion follows.
\end{proof}

\begin{thm}\label{thm:4.16}
Let $(A,[\cdot,\cdot])$ be a left-Alia algebra.
Then there is a compatible pre-left-Alia algebra $(A,\succ,\prec)$ of $(A,[\cdot,\cdot])$ if and only if there exists an invertible relative Rota-Baxter operator $T:V\rightarrow A$ with respect to a representation $(l,r,V)$.
Furthermore, the compatible pre-left-Alia algebra is given by
\begin{equation}\label{eq:42}
x\succ y=T\big(l(x)T^{-1}y\big),\; x\prec y=T\big(r(y)T^{-1}x\big),\;\forall x,y\in A.
\end{equation}
\end{thm}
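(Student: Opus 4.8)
The plan is to establish the two directions of the equivalence by exploiting the dictionary between pre-left-Alia structures, relative Rota-Baxter operators, and the constructions recorded in Proposition \ref{pro:4.14} and Lemma \ref{lem:4.15}.

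For the ``only if'' direction, suppose $(A,\succ,\prec)$ is a compatible pre-left-Alia algebra of $(A,[\cdot,\cdot])$. By Proposition \ref{pro:4.14}, the triple $(\mathcal{L}_{\succ},\mathcal{R}_{\prec},A)$ is a representation of $(A,[\cdot,\cdot])$ and $\mathrm{id}:A\rightarrow A$ is a relative Rota-Baxter operator with respect to it. Taking $V=A$, $l=\mathcal{L}_{\succ}$, $r=\mathcal{R}_{\prec}$ and $T=\mathrm{id}$, we obtain an invertible relative Rota-Baxter operator with respect to the representation $(\mathcal{L}_{\succ},\mathcal{R}_{\prec},A)$, which proves this implication; note also that in this case the formula \eqref{eq:42} reads $x\succ y=\mathcal{L}_{\succ}(x)y$ and $x\prec y=\mathcal{R}_{\prec}(y)x$, recovering the original bilinear maps.

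For the ``if'' direction, suppose $T:V\rightarrow A$ is an invertible relative Rota-Baxter operator with respect to a representation $(l,r,V)$. By Lemma \ref{lem:4.15}, there is a pre-left-Alia algebra $(V,\triangleright,\triangleleft)$ given by $u\triangleright v=l(Tu)v$ and $u\triangleleft v=r(Tv)u$. The idea is to transport this pre-left-Alia structure along the linear isomorphism $T$ to $A$: define $\succ,\prec$ on $A$ by $x\succ y=T(T^{-1}x\triangleright T^{-1}y)$ and $x\prec y=T(T^{-1}x\triangleleft T^{-1}y)$, which unwinds exactly to \eqref{eq:42}. Since $T$ is a linear isomorphism, $(A,\succ,\prec)$ automatically satisfies \eqref{eq:pre}, so it is a pre-left-Alia algebra. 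It remains to check compatibility, i.e.\ that its sub-adjacent product $x\succ y+x\prec y=T\big(l(x)T^{-1}y+r(y)T^{-1}x\big)$ coincides with $[x,y]$; writing $x=Tu$, $y=Tv$, this is precisely $T\big(l(Tu)v+r(Tv)u\big)=[Tu,Tv]$, which is the defining identity \eqref{eq:oop} of a relative Rota-Baxter operator. This completes the argument.

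The main obstacle is essentially bookkeeping rather than conceptual: one must verify carefully that transporting the pre-left-Alia identity \eqref{eq:pre} along $T$ is legitimate (which is immediate because \eqref{eq:pre} is multilinear and $T$ is bijective) and that the sub-adjacent product of the transported structure is exactly $[\cdot,\cdot]$, using the surjectivity of $T$ to range over all of $A$. No genuinely hard estimate or delicate case analysis is involved; the content is the observation that invertibility of $T$ turns the one-sided statements of Proposition \ref{pro:4.14} and Lemma \ref{lem:4.15} into an equivalence.
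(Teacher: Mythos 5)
Your proposal is correct and follows essentially the same route as the paper: both directions rest on Lemma \ref{lem:4.15} and Proposition \ref{pro:4.14}, with the ``if'' direction transporting the pre-left-Alia structure on $V$ along the isomorphism $T$ and verifying compatibility via the defining identity \eqref{eq:oop}, and the ``only if'' direction taking $T=\mathrm{id}$ with respect to $(\mathcal{L}_{\succ},\mathcal{R}_{\prec},A)$. No substantive differences from the paper's argument.
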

\begin{proof}
    Let $T:V\rightarrow A$ be an invertible relative Rota-Baxter operator of $(A,[\cdot,\cdot])$ with respect to $(l,r,V)$.
    Then by Lemma \ref{lem:4.15}, there is a pre-left-Alia algebra algebra on $V$ given by \eqref{eq:V-pre}.
    Since $T$ is invertible, there is a pre-left-Alia algebra structure  $(A,\succ,\prec)$  on $A$ given by
    \begin{eqnarray*}
    &&x\succ y=T(T^{-1}x\triangleright T^{-1}y)\overset{\eqref{eq:42}}{=}T\big(l(x)T^{-1}y\big),\\
    &&x\prec y=T(T^{-1}x\triangleleft T^{-1}y)\overset{\eqref{eq:42}}{=}T\big(r(y)T^{-1}x\big).
    \end{eqnarray*}
    Moreover, we have
    \begin{eqnarray*}
    x\succ y+x\prec y=T\big(l(x)T^{-1}y+r(y)T^{-1}x\big)\overset{\eqref{eq:oop}}{=}[x, y],
    \end{eqnarray*}
    that is, $(A,\succ,\prec)$ is a compatible pre-left-Alia algebra of $(A,[\cdot,\cdot])$.

    Conversely, if $(A,\succ,\prec)$ is a compatible pre-left-Alia algebra of $(A,[\cdot,\cdot])$, then the identity map $\mathrm{id}:A\rightarrow A$ is an invertible relative Rota-Baxter operator of the sub-adjacent left-Alia algebra $(A,[\cdot,\cdot])$ with respect to  $(\mathcal{L}_{\succ},\mathcal{R}_{\prec},A)$.
\end{proof}

\begin{cor}
Let $\omega$ be a nondegenerate antisymmetric bilinear form on a left-Alia algebra $(A,[\cdot,\cdot])$ satisfying \eqref{eq:2-coc}.
Then there is a compatible pre-left-Alia algebra  $(A,\succ,\prec)$  given by
\begin{equation}\label{eq:44}
    \omega(x\succ y,z)=-\omega(y,[x,z]),\; \omega(x\prec y,z)=\omega(x,[z,y]-[y,z]),\;\forall x,y,z\in A.
\end{equation}
\end{cor}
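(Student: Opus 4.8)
The plan is to deduce the statement from Proposition~\ref{pro:4.8} and Theorem~\ref{thm:4.16}, rather than checking the pre-left-Alia identity~\eqref{eq:pre} directly. Since $\omega$ is a nondegenerate antisymmetric bilinear form on $(A,[\cdot,\cdot])$ satisfying~\eqref{eq:2-coc}, Proposition~\ref{pro:4.8} provides an antisymmetric $r\in A\otimes A$ such that $T:=r^{\sharp}:A^{*}\to A$ is an invertible relative Rota-Baxter operator of $(A,[\cdot,\cdot])$ associated to the coadjoint representation $(\mathcal{L}^{*}_{[\cdot,\cdot]},\mathcal{L}^{*}_{[\cdot,\cdot]}-\mathcal{R}^{*}_{[\cdot,\cdot]},A^{*})$, with the additional property $\omega(x,y)=\langle (r^{\sharp})^{-1}x,y\rangle=\langle T^{-1}x,y\rangle$ recorded in~\eqref{eq:34}.

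Next I would feed $T$ into Theorem~\ref{thm:4.16}, taking $V=A^{*}$, $l=\mathcal{L}^{*}_{[\cdot,\cdot]}$ and $r=\mathcal{L}^{*}_{[\cdot,\cdot]}-\mathcal{R}^{*}_{[\cdot,\cdot]}$. Because $T$ is an invertible relative Rota-Baxter operator with respect to this representation, Theorem~\ref{thm:4.16} endows $A$ with a compatible pre-left-Alia algebra structure $(A,\succ,\prec)$ of $(A,[\cdot,\cdot])$, given by~\eqref{eq:42}, namely
\[
x\succ y=T\big(\mathcal{L}^{*}_{[\cdot,\cdot]}(x)T^{-1}y\big),\qquad x\prec y=T\big((\mathcal{L}^{*}_{[\cdot,\cdot]}-\mathcal{R}^{*}_{[\cdot,\cdot]})(y)T^{-1}x\big),\qquad\forall x,y\in A.
\]

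It then remains to identify this structure with~\eqref{eq:44}, which is a short computation using $\omega(x,y)=\langle T^{-1}x,y\rangle$ and the definitions of the dual maps $\mathcal{L}^{*}_{[\cdot,\cdot]}$, $\mathcal{R}^{*}_{[\cdot,\cdot]}$. For $x,y,z\in A$,
\[
\omega(x\succ y,z)=\langle T^{-1}(x\succ y),z\rangle=\langle \mathcal{L}^{*}_{[\cdot,\cdot]}(x)T^{-1}y,z\rangle=-\langle T^{-1}y,[x,z]\rangle=-\omega(y,[x,z]),
\]
and, using $\langle(\mathcal{L}^{*}_{[\cdot,\cdot]}-\mathcal{R}^{*}_{[\cdot,\cdot]})(y)\xi,z\rangle=\langle\xi,[z,y]-[y,z]\rangle$,
\[
\omega(x\prec y,z)=\langle T^{-1}(x\prec y),z\rangle=\langle T^{-1}x,[z,y]-[y,z]\rangle=\omega(x,[z,y]-[y,z]).
\]
Since $\omega$ is nondegenerate, these equalities determine $x\succ y$ and $x\prec y$ uniquely, so the pre-left-Alia algebra produced above is exactly the one described by~\eqref{eq:44}.

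All the substance of the argument is already contained in Proposition~\ref{pro:4.8} (equivalence between a nondegenerate $\omega$ satisfying~\eqref{eq:2-coc} and an invertible relative Rota-Baxter operator with respect to the coadjoint representation) and Theorem~\ref{thm:4.16} (invertible relative Rota-Baxter operator $\Leftrightarrow$ compatible pre-left-Alia structure); the only point requiring care is the bookkeeping in matching $(T,T^{-1},l,r)$ with the coadjoint data and tracking the sign conventions in $l^{*}$, so I do not anticipate a genuine obstacle. A self-contained alternative, avoiding Proposition~\ref{pro:4.8}, is to \emph{define} $\succ,\prec$ directly by~\eqref{eq:44} (well-defined by nondegeneracy of $\omega$) and verify~\eqref{eq:pre} by pairing both sides with $\omega(-,w)$ and applying~\eqref{eq:2-coc} together with the symmetric Jacobi identity~\eqref{eq:0-Alia}; this is elementary but more computational.
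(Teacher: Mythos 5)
Your proposal is correct and follows essentially the same route as the paper: invoke Proposition~\ref{pro:4.8} to obtain the invertible relative Rota-Baxter operator $r^{\sharp}$ with respect to the coadjoint representation, apply Theorem~\ref{thm:4.16} to get the compatible pre-left-Alia structure, and then verify \eqref{eq:44} by the same short pairing computation with $\omega(x,y)=\langle (r^{\sharp})^{-1}x,y\rangle$. The sign bookkeeping for $\mathcal{L}^{*}_{[\cdot,\cdot]}$ and $\mathcal{L}^{*}_{[\cdot,\cdot]}-\mathcal{R}^{*}_{[\cdot,\cdot]}$ in your computation matches the paper's.
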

\begin{proof}
By Proposition \ref{pro:4.8}, $r^{\sharp}:A^{*}\rightarrow A$ given by \eqref{eq:34} is an invertible relative Rota-Baxter operator of $(A,[\cdot,\cdot])$ with respect to $(\mathcal{L}^{*}_{[\cdot,\cdot]},\mathcal{L}^{*}_{[\cdot,\cdot]}-\mathcal{R}^{*}_{[\cdot,\cdot]},A^{*})$. Then by Theorem \ref{thm:4.16}, there is a compatible pre-left-Alia algebra $(A,\succ,\prec)$ of $(A,[\cdot,\cdot])$ given by
\begin{equation}
    x\succ y=r^{\sharp}\big( \mathcal{L}^{*}_{[\cdot,\cdot]}(x)(r^{\sharp})^{-1}y\big),\;
    x\prec y=r^{\sharp}\big( (\mathcal{L}^{*}_{[\cdot,\cdot]}-\mathcal{R}^{*}_{[\cdot,\cdot]})(y)(r^{\sharp})^{-1}x\big).
\end{equation}
Moreover, we have
{\small
\begin{eqnarray*}
 \omega(x\succ y,z)&=&\omega\Big(  r^{\sharp}\big( \mathcal{L}^{*}_{[\cdot,\cdot]}(x)(r^{\sharp})^{-1}y\big),z \Big)=\langle   \mathcal{L}^{*}_{[\cdot,\cdot]}(x)(r^{\sharp})^{-1}y ,z \rangle=-\langle (r^{\sharp})^{-1}y,[x, z]\rangle= -\omega(y,[x, z]),\\
 \omega(x\prec y,z)&=&\omega\Big(  r^{\sharp}\big( (\mathcal{L}^{*}_{[\cdot,\cdot]}-\mathcal{R}^{*}_{[\cdot,\cdot]})(y)(r^{\sharp})^{-1}x\big),z \Big)=
\langle (\mathcal{L}^{*}_{[\cdot,\cdot]}-\mathcal{R}^{*}_{[\cdot,\cdot]})(y)(r^{\sharp})^{-1}x,z\rangle\\
&=&\langle (r^{\sharp})^{-1}x, [z, y]-[y, z]\rangle=\omega(x, [z, y]-[y, z]).
\end{eqnarray*}}
Hence $(A,\succ,\prec)$ satisfies \eqref{eq:44}.
\end{proof}

\begin{thm}\label{thm:6.8}
Let $(A,\succ,\prec)$ be a pre-left-Alia algebra.
Then
\begin{equation*}
    r=\sum_{i=1}^{n}e^{*}_{i}\otimes e_{i}-e_{i}\otimes e^{*}_{i}
\end{equation*}
is an antisymmetric solution of the left-Alia Yang-Baxter equation in the left-Alia algebra $A\ltimes_{\mathcal{L}^{*}_{\succ},\mathcal{L}^{*}_{\succ}-\mathcal{R}^{*}_{\prec} }A^{*}$,
where $\{e_{1},\cdots,e_{n}\}$ is a basis of $A$ and $\{e^{*}_{1},\cdots,e^{*}_{n}\}$ is  the dual basis.
\end{thm}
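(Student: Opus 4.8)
The plan is to obtain this as a direct specialization of Theorem \ref{thm:4.10} applied to the canonical relative Rota-Baxter operator supplied by Proposition \ref{pro:4.14}. Recall from Proposition \ref{pro:4.14} that the pre-left-Alia algebra $(A,\succ,\prec)$ has a sub-adjacent left-Alia algebra $(A,[\cdot,\cdot])$ with $[x,y]=x\succ y+x\prec y$, that $(\mathcal{L}_{\succ},\mathcal{R}_{\prec},A)$ is a representation of $(A,[\cdot,\cdot])$, and that the identity map $\mathrm{id}:A\rightarrow A$ is a relative Rota-Baxter operator of $(A,[\cdot,\cdot])$ with respect to $(\mathcal{L}_{\succ},\mathcal{R}_{\prec},A)$.

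Next I would apply Theorem \ref{thm:4.10} with $V=A$, $(l,r)=(\mathcal{L}_{\succ},\mathcal{R}_{\prec})$ and $T=\mathrm{id}$. The associated element $T_{\sharp}\in A^{*}\otimes A\subset(A\oplus A^{*})\otimes(A\oplus A^{*})$ is characterized by $\langle T_{\sharp},e_{i}\otimes e^{*}_{j}\rangle=\langle \mathrm{id}(e_{i}),e^{*}_{j}\rangle=\delta_{ij}$, so that $T_{\sharp}=\sum_{i=1}^{n}e^{*}_{i}\otimes e_{i}$ and hence
\[
r=T_{\sharp}-\tau(T_{\sharp})=\sum_{i=1}^{n}e^{*}_{i}\otimes e_{i}-e_{i}\otimes e^{*}_{i},
\]
which is precisely the element in the statement. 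Since $\mathrm{id}$ is a relative Rota-Baxter operator of $(A,[\cdot,\cdot])$ associated to $(\mathcal{L}_{\succ},\mathcal{R}_{\prec},A)$, Theorem \ref{thm:4.10} gives that $r$ is an antisymmetric solution of the left-Alia Yang-Baxter equation in $A\ltimes_{l^{*},l^{*}-r^{*}}A^{*}=A\ltimes_{\mathcal{L}^{*}_{\succ},\mathcal{L}^{*}_{\succ}-\mathcal{R}^{*}_{\prec}}A^{*}$, which is the claim.

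There is essentially no genuine obstacle here: the content of the theorem has already been absorbed into Proposition \ref{pro:4.14} and Theorem \ref{thm:4.10}. The only points requiring attention are bookkeeping, namely that $(\mathcal{L}_{\succ},\mathcal{R}_{\prec},A)$ is indeed a representation of the sub-adjacent left-Alia algebra (this is exactly Proposition \ref{pro:4.14}), that the induced dual representation is $(\mathcal{L}^{*}_{\succ},\mathcal{L}^{*}_{\succ}-\mathcal{R}^{*}_{\prec},A^{*})$ so the semidirect product matches the one in the statement, and that the coordinate form of $T_{\sharp}$ for $T=\mathrm{id}$ is the displayed $r$. Alternatively, one could give a self-contained argument by substituting $r=\sum_{i}e^{*}_{i}\otimes e_{i}-e_{i}\otimes e^{*}_{i}$ directly into the definition of $\mathrm{Al}(r)$ in $A\ltimes_{\mathcal{L}^{*}_{\succ},\mathcal{L}^{*}_{\succ}-\mathcal{R}^{*}_{\prec}}A^{*}$ and checking $\mathrm{Al}(r)=0$ using the defining identity \eqref{eq:pre} of a pre-left-Alia algebra, but this merely repeats the computation already performed in the proof of Theorem \ref{thm:4.10}.
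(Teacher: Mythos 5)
Your proposal is correct and follows exactly the paper's own argument: invoke Proposition \ref{pro:4.14} to see that $\mathrm{id}$ is a relative Rota-Baxter operator of the sub-adjacent left-Alia algebra with respect to $(\mathcal{L}_{\succ},\mathcal{R}_{\prec},A)$, then apply Theorem \ref{thm:4.10} with $T=\mathrm{id}$ and identify $(\mathrm{id})_{\sharp}-\tau(\mathrm{id}_{\sharp})$ with the displayed $r$. The extra bookkeeping you mention (the coordinate form of $T_{\sharp}$ and the matching of the semidirect product) is carried out correctly.
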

\begin{proof}
    Since $(A,\succ,\prec)$ is a pre-left-Alia algebra, the identity map $\mathrm{id}:A\rightarrow A$ is a relative Rota-Baxter operator of the sub-adjacent left-Alia algebra with respect to $(\mathcal{L}_{\succ},\mathcal{R}_{\prec}, A)$.
    Then by Theorem \ref{thm:4.10},
    \begin{equation*}
        r=(\mathrm{id})_{\sharp}-\tau(\mathrm{id}_{\sharp})=\sum_{i=1}^{n}e^{*}_{i}\otimes e_{i}-e_{i}\otimes e^{*}_{i}
    \end{equation*}
    is an antisymmetric solution of the left-Alia Yang-Baxter equation in the left-Alia algebra $A\ltimes_{\mathcal{L}^{*}_{\succ},\mathcal{L}^{*}_{\succ}-\mathcal{R}^{*}_{\prec} }A^{*}$.
\end{proof}

\begin{ex}
Let $(A,\succ,\prec)$ be a 2-dimensional pre-left-Alia algebra with a basis $\{e_1,e_2\}$ and the non-zero multiplications $\succ,\prec$ given by
    \begin{align*}
        &e_1\succ e_2=2e_1,\;e_1\prec e_2=-e_1,\;e_2\succ e_1=e_1,\;e_2\prec e_1=-2e_1, \\
        &e_2\succ e_2=e_1+2e_2,\;e_2\prec e_2=-e_1-2e_2.
    \end{align*}
Then the non-zero multiplications of the sub-adjacent left-Alia algebra of $(A,\succ,\prec)$ are
\begin{equation*}
  [e_1,e_2]=e_1,\; [e_2,e_1]=-e_1.
\end{equation*}
If $\{e_1^*,e_2^*\}$ is  the dual basis, then
\begin{equation*}
  \mathcal{L}^{*}_{\succ}(e_1)(e_1^*) = -2e_2^*,\;
  \mathcal{L}^{*}_{\succ}(e_2)(e_1^*) = -e_1^*-e_2^*,\;
  \mathcal{L}^{*}_{\succ}(e_2)(e_2^*) = -2e_2^*.
\end{equation*}
The non-zero multiplications of the left-Alia algebra $d=A\ltimes_{\mathcal{L}^{*}_{\succ},\mathcal{L}^{*}_{\succ}-\mathcal{R}^{*}_{\prec} }A^{*}$ are
\begin{align*}
  &[e_1,e_2]_d=e_1,\;[e_2,e_1]_d=-e_1,\; [e_1,e_1^*]_d=[e_2,e_2^*]_d=-2e_2^*, \;[e_2,e_1^*]_d=-e_1^*-e_2^*, \\
  &[e_1^*,e_1]_d=-4e_2^*, \;[e_1^*,e_2]_d=-2e_1^*-2e_2^*,\; [e_2^*,e_2]_d=-4e_2^*.
\end{align*}
Furthermore, by Theorem \ref{thm:6.8},
	\begin{equation*}
		r=\sum_{i=1}^{2}e^{*}_{i}\otimes e_{i}-e_{i}\otimes e^{*}_{i},
	\end{equation*}
	is an antisymmetric solution of the left-Alia Yang-Baxter equation in the left-Alia algebra $d$.
There exists a left-Alia bialgebra $(d,[\cdot,\cdot]_{d},\delta)$ with a non-zero co-multiplication $\delta$ on $d$, given by
	\begin{align*}
		&\delta(e_{1})=-4e_{2}^*\otimes e_{1}+2e_{2}^*\otimes e_{1}
+4e_{1}\otimes e_{2}^*+e_{2}^*\otimes e_{1}+2e_{1}\otimes e_{2}^*,\\
&\delta(e_{2})=-3e_{1}^*\otimes e_{1}-2e_{2}^*\otimes e_{1}
-4e_{1}\otimes e_{1}^*-2e_{1}\otimes e_{2}^*-2e_{2}^*\otimes e_{2}-4e_2\otimes e_{2}^*,\\
&\delta(e_{1}^*)=e_{1}^*\otimes e_{2}^*-e_{2}^*\otimes e_{1}^*.
	\end{align*}

\end{ex}



\noindent{\bf Acknowledgements.} The third author acknowledges support from the NSF China (12101328) and NSF China (12371039).


\begin{thebibliography}{99}












\bibitem{Bai2010} C. Bai, Double constructions of Frobenius algebras, Connes cocycles and their duality.  \textit{J. Noncommut. Geom.} \textbf{4} (2010),
    475-530.

\bibitem{Bai2021.2} C. Bai, An introduction to pre-Lie algebras, in: {\it Algebra and Applications 1: Nonssociative Algebras and Categories}, Wiley Online Library (2021), 245-273.

















\bibitem{Benali} K. Benali, T. Chtioui, A. Hajjaji and S. Mabrouk,
Bialgebras, the Yang-Baxter equation and Manin triples for mock-Lie algebras, {\it Acta Comment. Univ. Tartu. Math.} \textbf{27} (2023), 211-233.

\bibitem{IT} D. Beson, {\it Polynomial Invariants of Finite Groups}, Cambridge University Press (1993).







\bibitem{Bur} D. Burde, Left-symmetric algebras, or pre-Lie algebras in geometry and physics,  \textit{Cent. Eur. J. Math}. \textbf{4} (2006), 323-357.

\bibitem{Baxter} G. Baxter, An analytic problem whose solution follows from a simple algebraic identity, \textit{Pac. J. Math}. \textbf{10} (1960), 731每742.

\bibitem{Chari} V. Chari and A. Pressley, {\it A guide to quantum groups}, Cambridge University Press, Cambridge (1994).

\bibitem{Che} C. Chevalley, Invariants of finite groups generated by reflections, {\it Amer. J. Math.}\textbf{1955}, 778-782.










\bibitem{Das} A. Das and S.K. Mishra,
Bimodules over relative Rota-Baxter algebras and cohomologies, \textit{Algebr. Represent. Theory} \textbf{26} (2023), no.5, 1823每1848.

\bibitem{Dri} V. Drinfeld, Halmiltonian structure on the Lie groups, Lie bialgebras and the geometric sense of the classical Yang-Baxter equations, \textit{Sov. Math. Dokl}. \textbf{27} (1983), 68-71.


\bibitem{Dzh09} A. Dzhumadil'daev, Algebras with skew-symmetric identity of degree 3, \textit{J. Math. Sci.} \textbf{161} (2009),  11-30.



\bibitem{Guo-I} L. Guo, An Introduction to Rota-Baxter Algebra, {\it Surveys of Modern Mathematics}, Somerville, MA: International Press; Beijing: Higher education press, 4 (2012).








\bibitem{Kang-Liu} C. Kang, G. Liu, Z. Wang  and S. Yu, Manin triples and bialgebras of Left-Alia algebras associated to invariant theory, arXiv:2403.05339v1.

\bibitem{Kupershmidt} B. A. Kupershmidt, What a classical r-matrix really is, {\it J. Nonlinear Math. Phys.} {\bf 6} (1999), 448-488.












\bibitem{Li-Sheng} Y. Li, Y. Sheng and R. Tang,
Post-Hopf algebras, relative Rota-Baxter operators and solutions to the Yang-Baxter equation, \textit{J. Noncommut. Geom}. \textbf{18} (2024), no.2, 605每630.


\bibitem{LB2022} G. Liu and C. Bai, Anti-pre-Lie algebras, Novikov algebras and commutative
    2-cocycles on Lie algebras, \textit{J. Algebra} \textbf{609} (2022), 337-379.

 \bibitem{LB2023} G. Liu and C. Bai, A bialgebra theory for transposed Poisson algebras via anti-pre-Lie bialgebras and anti-pre-Lie-Poisson bialgebras. {\it Commun. Contemp. Math.} (2023), https://doi.org/10.1142/S0219199723500505.











\bibitem{Lod} J.-L. Loday, Cup product for Leibniz cohomology and dual Leibniz algebras, \textit{Math. Scand.} \textbf{77}, Univ. Louis Pasteur, Strasbourg, (1995), pp. 189-196.


\bibitem{Lod2012} J.-L. Loday and B. Vallette, {\it Algebraic Operads}, Springer, Berlin (2012).


\bibitem{Rota} G.C. Rota, Baxter algebras and combinatorial identities I, II, \textit{ Bull. Am. Math. Soc}. \textbf{75} (1969), 330每334.

\bibitem{Rota2} G.C. Rota, Baxter operators, an introduction, In: Gian-Carlo Rota on Combinatorics: Introductory
Papers and Commentaries, Joseph P. S. Kung, Editor, Birkh“auser, Boston, 1995.












\bibitem{Semenov} M. A. Semenov-Tian-Shansky, What is a classical R-matrix? {\it Funct. Anal. Appl.} {\bf 17} (1983), 259-272.

\bibitem{Sheng-R} Y. Sheng,
A survey on deformations, cohomologies and homotopies of relative Rota-Baxter Lie algebras, \textit{Bull. Lond. Math. Soc}. \textbf{54} (2022), no.6, 2045每2065.

\bibitem{SW} Y. Sheng and Y. Wang, Quasi-triangular and factorizable antisymmetric infinitesimal bialgebras, \textit{J. Algebra} \textbf{628} (2023), 415-433.




\bibitem{ST} G. Shephard and J. Todd, Finite unitary reflection groups, \textit{Canadian J. Math.} \textbf{6}(1954), 274-304.

\bibitem{ShengT} R. Tang and Y. Sheng, Leibniz bialgebras, relative Rota-Baxter operators and the classical Leibniz  Yang-Baxter equation, \textit{J. Noncommut. Geom.} \textbf{16} (2022), 1179-1211.

\bibitem{Zusmanovich} P. Zusmanovich,
Special and exceptional mock-Lie algebras, {\it Linear Algebra Appl.} \textbf{518} (2017), 79-96.













































\end{thebibliography}
\end{document}